\newtheorem{theorem}{Theorem}[section]
\newtheorem{prop}[theorem]{Proposition}
\newtheorem{corollary}[theorem]{Corollary}
\newtheorem{dfn}[theorem]{Definition}
\newtheorem{rmk}[theorem]{Remark}
\def\thm{\textbftheorem}
\def \bp {\begin{prp} \ }
	\def \ep {\end{prp}}
\def \bpm {\begin{prm} \ }
	\def \epm {\end{prm}}
\def \bc {\begin{crl} \ }
	\def \ec {\end{crl}}
\def \thm {\begin{Theorem} \ }
	\def \ethm {\end{Theorem}}
\def \bl {\begin{lem} \ }
	\def \el {\end{lem}}
\def \bd {\begin{defi} \ \rm }
	\def \ed {\end{defi}}
\def \brm {\begin{rmk} \ }
	\def \erm {\end{rmk}}
\def \bxm {\begin{xmp} \ \rm }
	\def \exm {\end{xmp}}
\def \bcj {\begin{conj}}
	\def \ecj {\end{conj}}
\def \nmr {\begin{enumerate}}
	\def \enmr {\end{enumerate}}
\def \tmz {\begin{itemize}}
	\def \etmz {\end{itemize}}
\begin{document}
\title{Connected dom-forcing Sets in Graphs}
\maketitle
\begin{center}
  \author{\bf \sc Susanth P $^{1,2}$\  Charles Dominic $^{3} $ \ Premodkumar K P $^{4} $ \\{\footnotesize $^{1}$ Department of Mathematics},\\{\footnotesize Pookoya Thangal Memorial Government College,\\(Affiliated to University of Calicut)\\ Perinthalmanna, Kerala- 679322, INDIA.}\\{\footnotesize $^{2}$ Department of Mathematics}\\{\footnotesize St.Joseph's  College(Autonomous),\\ Devagiri, Calicut~-~673008, INDIA.} \\{\footnotesize $^{3}$ Flat 3, Stone Bridge House, \\Stonebridge walk, Chelmsford, CM1 1DN,\\ Essex, United Kingdom.} \\{\footnotesize $^4$ Department of Mathematics},\\{\footnotesize Govt. College Malappuram ,\\ Kerala- 676509, INDIA.} \\ {\footnotesize $^{1,2}$ E-mail: $psusanth@gmail.com $}\\ { \hspace{1cm}\footnotesize $^{3}$ E-mail: $charlesdominicpu@gmail.com$}\\
 {\footnotesize $^{4}$ E-mail: $pramod674@gmail.com  $} } 
\end{center}

\begin{abstract}
	In a graph $G$, a dominating set $D_{f}\subseteq V(G)$  is called a \emph{dom-forcing set} if the sub-graph induced by $\langle D_{f}  \rangle$  must form a zero forcing set. The minimum cardinality of such a set is known as the dom-forcing number of the graph $G$, denoted by $F_{d}(G)$. A connected dom-forcing forcing set  of a graph $G$, is a dom-forcing set of $G$ that induces a sub graph of $G$ which is connected. The connected dom-forcing number of G, $F_{cd}(G)$, is the minimum size of a connected dom-forcing set.  This study delves into the concept of the connected dom-forcing number $F_{cd}(G)$, examining its properties and characteristics. Furthermore, it seeks to accurately determine $F_{cd}(G)$ for several well-known graphs and their graph products.\\~\\   
 \textbf{AMS Subject Classification:} 05C50, 05C69, 05C12.\\
 \textbf{Key Words:} Zero forcing number, Connected zero forcing number, Domination number, Connected domination number, Dom-forcing number, Connected Dom-forcing number.
	\end{abstract}

\title{}

\section{Introduction}

A zero forcing set contains certain vertices in a graph that are initially colored black after applying the following color-change rule. Color-change rule: Let $G$ be a graph with each vertex is colored either white or black, $u$ be a black vertex of $G$, and exactly one neighbor $v$ of $u$ be white. Then change the color of $v$ to black. When this rule is applied, we say $u$ forces $v$, and write $u \rightarrow v$ see \cite{aim}.
 A zero forcing set ( or $\mathcal{ZFS}$ for brevity) of a graph $G$ is a subset $Z$ of vertices such that if initially the vertices in $Z$ are colored black and remaining vertices are colored white, the entire graph $G$ may be colored black by repeatedly applying the color-change rule. The minimum $|Z|$ over all zero forcing sets $Z \subset G$ is called the zero forcing number and is denoted by $Z(G)$.Any zero forcing set of order $Z(G)$ is called a minimum zero forcing set \cite{aim}.

A connected zero forcing set ( or $\mathcal{CZFS}$ for brevity) of a graph $G$, is a zero forcing set of $G$ that induces a sub graph of $G$ which is connected. The connected zero forcing number of $G$, $Z_c(G)$, is the minimum size of connected zero forcing sets. Any connected zero forcing set of order $Z_c(G)$ is called a minimum connected zero forcing set \cite{czero}.

A subset $D$ of vertex set $V(G) $  in a graph $G=(V, E)$ is referred a dominating set if every vertex $v\in V(G)$ is either an element of $D$ or as at least one neighbor in $D$ see\cite{dom}.

If a sub graph formed by the vertices of a dominating set in a graph $G$ is connected then the set is called a connected dominating set.
A minimum connected dominating set of a graph $G$ is a connected dominating set with the smallest possible cardinality among all connected dominating sets of $G$. The connected domination number of $G$ is the number of vertices in the minimum connected dominating set, and is denoted by $\gamma_c(G)$ \cite{cdom}.

A set $D_{f}\subseteq V$ of vertices is called a \emph{dom-forcing set} if it satisfies the following two conditions.\\
i)  $ D_{f}  $ must form a  dominating set.\\
ii) $ D_{f}  $ must form a  zero forcing set.\\
The minimum cardinality of such a set is called the dom-forcing number of the graph $G$ and is denoted by $F_{d}(G)$ \cite{df1}.

This study integrates the concepts of connected domination and connected zero forcing to introduce a new graph-theoretic parameter: the connected dom-forcing set. In this article we consider only connected graphs. 
\section{motivation}
Zero forcing and domination are two independently rich and fundamental concepts in graph theory that each have significant applications to problems in network theory, quantum control of systems, power grid surveillance, and information spreading. Domination reflects the notion of local influence—forcing all vertices to be either in a chosen set or within distance one of it—while zero forcing describes how influence spreads throughout a network from a minimal starting set.

The concept of a dom-forcing set comes naturally as a synthesis of these two concepts. It's a vertex set that not only has immediate control over the graph (by domination) but also sets off a process that ends up forcing the whole network (by zero forcing). By combining these ideas, dom-forcing sets form a more robust tool to study how influence can be gained and extended within intricate systems.

The addition of connected dom-forcing sets brings in yet another essential aspect—connectivity. In actual networks like communication infrastructures, biological systems, and social networks, connectedness of the initial control set is generally necessary for resilience, coordination, and effective transmission of information. A connected dom-forcing set guarantees that influence is not merely exhaustive but also structurally connected and resilient against interference.

Investigating dom-forcing sets and their related variants opens a rich class of mathematical problems, such as finding bounds, describing extremal instances, and studying their relation to traditional graph invariants like the domination number and zero forcing number. Such studies also enable practical applications in which efficient control needs to reconcile immediate influence, dynamic spread, and structural robustness.

In conclusion, the study of dom-forcing sets is driven by its theoretical value as well as by its prospects for solving practical problems in network control more exactly and effectively.
\section{Connected dom-forcing set}
In this section, we explore the idea of a connected dom-forcing set. Additionally, we do a comparison between the dom-forcing set and the connected dom-forcing set in the context of specific types of graphs.
\begin{dfn}
    A set $CD_{f}\subseteq V$ of vertices is called a \emph{connected dom-forcing set} if it satisfies the following two conditions.\\
i)  $\langle CD_{f}  \rangle $ must form a  connected dominating set.\\
ii) $\langle CD_{f}  \rangle $ must form a  connected zero forcing set.\\
\end{dfn}
The minimum cardinality of such a set is called the connected dom-forcing number of the graph $G$ and is denoted by $F_{cd}(G)$. For instance, contemplate the graph $C_5$ illustrated in Figure 1.  
\begin{center}
	\makeatletter
	\begin{figure}[H]
		\centering
  \definecolor{ffffff}{rgb}{1,1,1}
		\begin{tikzpicture}[every node/.style={fill=red!60,circle,inner sep=1pt},
			.style={sibling distance=20mm,nodes={fill=red!45}},
			.style={sibling distance=20mm,nodes={fill=red!30}},
			.style={sibling distance=20mm,nodes={fill=red!25}}, style= thick]
			\draw [line width=1.5pt,color=blue,step=.5cm,] (3,2.5)to[bend right=30](1,1);
			\draw [line width=1.5pt,color=blue,step=.5cm,] (1,1)to[bend right=30](2,-1);
			
			\draw [line width=1.5pt,color=blue,step=.5cm,] (2,-1)to[bend right=30](4,-1);
			\draw [line width=1.5pt,color=blue,step=.5cm,] (4,-1)to[bend right=30](5,1);
			\draw [line width=1.5pt,color=blue,step=.5cm,] (5,1)to[bend right=30] (3,2.5);
			

			\node [draw,circle  ,fill=black,   text=ffffff, font=\huge, inner sep=0pt,minimum size=5mm] (3)  at (3,2.5)  {\scalebox{.5}{$u_{1}$}};	
			\node [draw,circle  ,fill=black,   text=ffffff, font=\huge, inner sep=0pt,minimum size=5mm] (3)  at (1,1)  {\scalebox{.5}{$u_{2}$}};
			\node [draw,circle  ,fill=black,   text=ffffff, font=\huge, inner sep=0pt,minimum size=5mm] (3)  at (2,-1)  {\scalebox{.5}{$u_{3}$}};
			\node [draw,circle  ,fill=ffffff,   text=black, font=\huge, inner sep=0pt,minimum size=5mm] (3)  at (4,-1)  {\scalebox{.5}{$u_{4}$}};
   \node [draw,circle  ,fill=ffffff,   text=black, font=\huge, inner sep=0pt,minimum size=5mm] (3)  at (5,1)  {\scalebox{.5}{$u_{5}$}};
		\end{tikzpicture}
  \caption {In this graph $C_5$, $CD_f=\{u_1, u_2, u_3\}$ is a connected dominating as well as zero forcing set and no subset of the vertex set with cardinality less than three has this property so $F_{cd}(C_5)=3$}
  \end{figure}
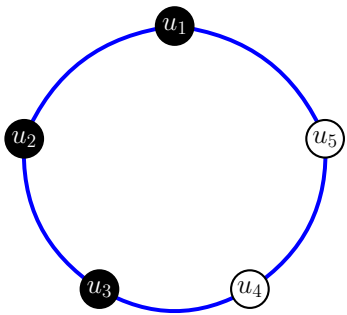
  \end{center}
  
  The focus of this article is on determining the connected dom-forcing number, $F_{cd}(G)$, for various well-known graphs and their specific products, including the corona product, rooted product, and Cartesian product of graphs. This investigation aims to explore the structural properties of $F_{cd}(G)$ and provide new insights into its applications in graph theory.

  The definition makes it evident that the combination of a connected zero forcing set and a connected dominating set constitutes a dom-forcing set. Therefore, the following relationship holds.
\begin{prop} \label{bound}
For any connected graph $G$\\ \begin{center} i) $ Z_c(G) \leq F_{cd}(G)\leq Z_c(G)+\gamma_c (G)$\\
ii) $ \gamma_c(G) \leq F_{cd}(G)\leq Z_c(G)+\gamma_c (G)$
\end{center}
\end{prop}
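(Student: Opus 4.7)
The plan is to handle all four inequalities directly from the definitions, with only one mildly geometric step in the middle. For the two lower bounds I would simply unpack the definition of a connected dom-forcing set: if $CD_f$ is a minimum connected dom-forcing set then $\langle CD_f \rangle$ is connected and serves both as a zero forcing set and as a dominating set. Hence $CD_f$ qualifies as a connected zero forcing set and as a connected dominating set, giving $Z_c(G) \leq F_{cd}(G)$ and $\gamma_c(G) \leq F_{cd}(G)$ at once.

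For the common upper bound, the approach is to exhibit an explicit witness. Let $Z$ be a minimum connected zero forcing set and $D$ a minimum connected dominating set of $G$, and set $S = Z \cup D$. Then $|S| \leq |Z| + |D| = Z_c(G) + \gamma_c(G)$. Two of the three required properties of $S$ follow from monotonicity: since any superset of a zero forcing set is again a zero forcing set, and any superset of a dominating set is again a dominating set, $S$ automatically inherits both properties from its subsets $Z$ and $D$.

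The only point requiring an actual argument—and the main, though modest, obstacle—is that $\langle S \rangle$ is connected. My strategy is to use that $D$ dominates $G$: every vertex $v \in Z \setminus D$ has some neighbor in $D$, so in $\langle S \rangle$ each such $v$ is adjacent to the connected subgraph $\langle D \rangle$. Combined with the connectivity of $\langle Z \rangle$ and of $\langle D \rangle$, this lets me route a path inside $\langle S \rangle$ between any two vertices of $S$, passing through $\langle D \rangle$ whenever necessary. Once this connectivity is established, $S$ is a bona fide connected dom-forcing set, which yields $F_{cd}(G) \leq Z_c(G) + \gamma_c(G)$ and completes the upper bounds in both (i) and (ii).
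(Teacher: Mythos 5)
Your proposal is correct and follows essentially the same route the paper intends: the lower bounds are immediate from the definition of a connected dom-forcing set, and the upper bound is witnessed by the union of a minimum connected zero forcing set and a minimum connected dominating set. The only added content is your explicit verification that $\langle Z \cup D\rangle$ is connected (each vertex of $Z\setminus D$ is adjacent to the connected set $D$ by domination), a detail the paper leaves implicit in its one-line justification.
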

Also from the definition every connected dom-forcing set is a dom-forcing set and if a dom-forcing set is connected in components, then it is a connected dom-forcing set. Hence we have the following result.
\begin{prop}
For any connected graph $G$,  $$F_{d}(G)\leq F_{cd}(G).$$
\end{prop}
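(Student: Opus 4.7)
The proof is essentially a direct appeal to the definitions, so the plan is to unfold them carefully rather than to perform any real computation. My strategy is to take a minimum connected dom-forcing set and verify that it already satisfies the two defining conditions of a dom-forcing set, so it lies in the (larger) collection over which $F_d(G)$ is minimised.

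First I would fix a minimum connected dom-forcing set $CD_f \subseteq V(G)$ with $|CD_f| = F_{cd}(G)$. By the definition given above, the induced subgraph $\langle CD_f \rangle$ is both a connected dominating set and a connected zero forcing set of $G$. Next I would observe the two trivial containments between the relevant classes of vertex subsets: every connected dominating set of $G$ is, in particular, a dominating set of $G$ (connectedness is only an additional structural condition on $\langle CD_f \rangle$, and has no bearing on whether every vertex of $V(G)\smin CD_f$ has a neighbour in $CD_f$); and every connected zero forcing set of $G$ is a zero forcing set of $G$ (the colour-change rule is applied to vertices of $G$ irrespective of any connectivity of the initial black set).

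Combining these two observations, $CD_f$ satisfies both conditions (i) and (ii) in the definition of a dom-forcing set recalled in the Introduction, so $CD_f$ is itself a dom-forcing set of $G$. Since $F_d(G)$ is the minimum cardinality of a dom-forcing set, it follows that $F_d(G) \leq |CD_f| = F_{cd}(G)$, which is the desired inequality.

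There is really no obstacle here; the only thing worth flagging is that the statement is written for \emph{connected} $G$, but this hypothesis is not actually used in the argument beyond ensuring that both parameters $F_d(G)$ and $F_{cd}(G)$ are well-defined (a connected dom-forcing set exists whenever the graph admits a connected dominating set that is also a connected zero forcing set, and $V(G)$ itself works when $G$ is connected). So the proof is a two-line unfolding of definitions, and I would present it as such.
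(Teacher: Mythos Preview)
Your proof is correct and follows essentially the same approach as the paper: the authors simply remark in the text preceding the proposition that ``from the definition every connected dom-forcing set is a dom-forcing set,'' which is exactly the containment you spell out in detail. Your version is just a more careful unfolding of the same one-line observation.
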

A connected acyclic spanning sub graph of a graph is called spanning tree of a graph. A vertex in a spanning tree is called a leaf if it has degree one.  A maximum leaf spanning tree is a spanning tree that has the largest possible number of leaves among all spanning trees of G. The max leaf number of G is the number of leaves in the maximum leaf spanning tree \cite{leaf}.

If $d$ is the connected domination number of an $n$-vertex graph $G$, where $n > 2$, and $l$ is its max leaf number, then the three quantities $d, l$, and $n$ obey the simple equation $n=d+l$ \cite{mleaf}. Therefore by proposition \ref{bound}, we have $n-l\leq F_{cd}(G).$

There are graphs whose connected domination number is equal to the connected dom-forcing number. From the relation connecting connected domination number and max leaf, we get the following result.
\begin{prop}
    Let $P_n$, $n\geq 4$ be a path of order $n$. Then $$F_{cd}(P_n)=\gamma _c(P_n)=n-2.$$
\end{prop}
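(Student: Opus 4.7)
The plan is to sandwich $F_{cd}(P_n)$ between two equal quantities. For the lower bound, I would first recall that $\gamma_c(P_n) = n - 2$: the only spanning tree of $P_n$ is $P_n$ itself, which has exactly $l = 2$ leaves, so the identity $n = \gamma_c(G) + l$ recalled just before Proposition \ref{bound} yields $\gamma_c(P_n) = n - 2$. Proposition \ref{bound}(ii) then gives $F_{cd}(P_n) \geq \gamma_c(P_n) = n - 2$ at once.

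For the matching upper bound, I would exhibit an explicit connected dom-forcing set of cardinality $n - 2$. Writing the vertices of $P_n$ as $v_1, v_2, \ldots, v_n$ in consecutive order, take $S = \{v_2, v_3, \ldots, v_{n-1}\}$ and verify the three required properties. First, $\langle S \rangle$ is the subpath $v_2 v_3 \cdots v_{n-1}$, which is connected (and contains at least one edge, since $n \geq 4$ forces $|S| \geq 2$). Second, the only vertices outside $S$ are the endpoints $v_1$ and $v_n$, which are adjacent to $v_2 \in S$ and $v_{n-1} \in S$ respectively, so $S$ is a dominating set. Third, after coloring $S$ black, the vertex $v_2$ has the unique white neighbor $v_1$ and so $v_2 \to v_1$; subsequently $v_{n-1}$ has the unique white neighbor $v_n$ and so $v_{n-1} \to v_n$, making the entire graph black. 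Hence $S$ is a connected dom-forcing set of size $n - 2$, so $F_{cd}(P_n) \leq n - 2$. Combining the two bounds gives $F_{cd}(P_n) = \gamma_c(P_n) = n - 2$.

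No step poses a genuine obstacle: the argument reduces to choosing the right candidate set and performing three routine verifications. The hypothesis $n \geq 4$ is needed only to ensure that $|S| \geq 2$ and that the color-change rule can initiate forcing at both ends of $S$; for $n = 3$ the set $\{v_2\}$ is dominating and connected but fails to be a zero forcing set, which is precisely why the statement excludes this case.
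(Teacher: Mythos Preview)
Your proof is correct and matches the paper's approach. The paper does not write out a proof for this proposition; it simply notes that the inequality $n-l\le F_{cd}(G)$ (coming from $n=\gamma_c(G)+l$ together with Proposition~\ref{bound}) yields the result, leaving the upper bound implicit. Your argument makes both directions explicit in exactly the intended way: the max-leaf identity gives $\gamma_c(P_n)=n-2$ and hence the lower bound via Proposition~\ref{bound}(ii), while the set $S=\{v_2,\ldots,v_{n-1}\}$ witnesses the upper bound.
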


\begin{prop}
    Let $C_n$, $n\geq 4$ be a cycle of order $n$. Then $$F_{cd}(C_n)=\gamma _c(C_n)=n-2.$$
\end{prop}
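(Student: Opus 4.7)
The proof splits cleanly into a lower bound and an upper bound, so that is how I would structure it.

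For the lower bound, I would just invoke Proposition \ref{bound}(ii), which gives $\gamma_c(C_n)\le F_{cd}(C_n)$. Since it is standard (and follows from the connected domination/max-leaf identity $n=d+l$ mentioned just before the proposition, as a Hamilton cycle has max leaf number $2$) that $\gamma_c(C_n)=n-2$ for $n\ge 4$, I immediately get $F_{cd}(C_n)\ge n-2$.

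For the upper bound, I would exhibit an explicit connected dom-forcing set of size $n-2$. Label the vertices of $C_n$ cyclically as $u_1,u_2,\dots,u_n$ and take $S=\{u_1,u_2,\dots,u_{n-2}\}$. I would then verify the three required properties in turn: (a) $S$ induces the path $u_1u_2\cdots u_{n-2}$, which is connected; (b) $S$ is a dominating set, because the only vertices outside $S$ are $u_{n-1}$ and $u_n$, and $u_{n-1}$ is adjacent to $u_{n-2}\in S$ while $u_n$ is adjacent to $u_1\in S$; (c) $S$ is a zero forcing set of $C_n$, since in the initial black/white configuration the vertex $u_{n-2}$ has exactly one white neighbor ($u_{n-1}$), hence $u_{n-2}\to u_{n-1}$, after which $u_{n-1}$ has exactly one white neighbor ($u_n$), forcing $u_{n-1}\to u_n$. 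Thus $F_{cd}(C_n)\le |S|=n-2$.

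Combining the two bounds gives $F_{cd}(C_n)=\gamma_c(C_n)=n-2$, which is the statement. There is really no substantive obstacle here; the only place that requires a moment's care is the zero-forcing step, where one must notice that in $C_n$ a block of $n-2$ consecutive vertices leaves the two white vertices in a configuration where the endpoint $u_{n-2}$ of the arc has a unique white neighbor, so the chain reaction proceeds and terminates after two forces. Everything else is immediate from the previously stated proposition and the classical value of $\gamma_c(C_n)$.
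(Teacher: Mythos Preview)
Your proof is correct and mirrors the paper's reasoning: the paper states this proposition without a detailed proof, deriving it from Proposition~\ref{bound}(ii) together with the max-leaf identity $n=d+l$ (a cycle has max leaf number $2$, so $\gamma_c(C_n)=n-2$), with the upper bound left as evident. Your explicit verification that $\{u_1,\dots,u_{n-2}\}$ is a connected dom-forcing set simply fills in what the paper leaves implicit.
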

 \begin{prop} \cite{df1} \label{df}
     For any graph G, $F_d(G)=1$ if and only if $G= P_1$ or $  P_2$.
 \end{prop}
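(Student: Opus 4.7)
The plan is to prove both directions of the biconditional separately, with the forward direction being the substantive one. Since the statement is small, the main work is just translating the two defining conditions of a dom-forcing set into degree constraints on the single witness vertex.

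For the easy direction, I would verify directly that $\{v\}$ is simultaneously dominating and zero forcing whenever $G=P_1$ or $G=P_2$: in $P_1$ the single vertex trivially satisfies both conditions since there is nothing to dominate or to force, and in $P_2$ either vertex $v$ dominates the whole graph and forces its unique (hence only white) neighbor via the color-change rule. This gives $F_d(P_1)=F_d(P_2)=1$, and combined with the trivial lower bound $F_d(G)\geq 1$ we get equality.

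For the forward direction, suppose $F_d(G)=1$ and let $\{v\}$ be a minimum dom-forcing set on an $n$-vertex graph $G$. I would extract two incompatible degree conditions on $v$ when $n\geq 3$. First, because $\{v\}$ is dominating, every other vertex of $G$ must be adjacent to $v$, so $\deg(v)=n-1$. Second, because $\{v\}$ is a zero forcing set, the first application of the color-change rule must be performed by $v$ itself (since no other vertex is black yet), and that rule requires $v$ to have exactly one white neighbor, forcing $\deg(v)=1$. Combining $n-1=1$ gives $n=2$, a contradiction with $n\geq 3$. Hence $n\in\{1,2\}$, and since $G$ is connected this leaves only $G=P_1$ or $G=P_2$.

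Honestly, there is no real obstacle here; the only subtlety is making sure the $n=1$ case is handled separately, since the equation $\deg(v)=n-1=0$ from domination is vacuously consistent with the zero forcing condition (no forcing steps are required). Everything else is a two-line degree argument, which is presumably why the result is simply cited from \cite{df1} rather than reproved in detail.
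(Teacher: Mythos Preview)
Your argument is correct. The two competing degree constraints on the single black vertex $v$---namely $\deg(v)=n-1$ from domination and $\deg(v)\leq 1$ from the requirement that $v$ perform the first force---settle the forward direction cleanly, and you handle the degenerate $n=1$ case properly.

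As for comparison with the paper: there is nothing to compare. The paper does not prove this proposition; it merely quotes it from \cite{df1} and uses it immediately afterward to deduce that $F_{cd}(G)\geq 2$ for connected graphs of order at least three. Your closing remark already anticipates this, and it is accurate---the result is cited precisely because it is a short degree argument that the authors saw no need to reproduce.
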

  \begin{prop}
    Let $G$ be any connected graph with order greater than two. Then  $F_{cd}(G)\geq2.$
\end{prop}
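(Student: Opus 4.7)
The plan is to argue by contradiction, leveraging the characterization given in Proposition \ref{df} together with the inequality $F_d(G)\le F_{cd}(G)$ already established. First I would note that $F_{cd}(G)\ge 1$ holds trivially, since any dominating (or zero forcing) set of a graph with at least one vertex must itself be nonempty. So it suffices to rule out the case $F_{cd}(G)=1$.

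Suppose for contradiction that there exists a connected dom-forcing set $CD_f=\{v\}$ of size one. Then $\{v\}$ is in particular a dom-forcing set, so $F_d(G)\le 1$, and combined with $F_d(G)\ge 1$ we obtain $F_d(G)=1$. By Proposition \ref{df}, this forces $G=P_1$ or $G=P_2$, contradicting the hypothesis that $G$ has order greater than two. Hence $F_{cd}(G)\ge 2$.

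Since the argument only invokes results that are already proved in the excerpt (the monotonicity $F_d(G)\le F_{cd}(G)$ and the classification in Proposition \ref{df}), there is essentially no technical obstacle. The only mild subtlety worth spelling out, if one wants a fully self-contained proof without appealing to Proposition \ref{df}, is verifying directly that a single vertex $v$ cannot simultaneously dominate $G$ and form a (connected) zero forcing set when $|V(G)|\ge 3$: domination would require every other vertex to be adjacent to $v$, but then $v$ has at least two white neighbors and the color-change rule can never be triggered, so $\{v\}$ fails to be a zero forcing set. Either route completes the argument.
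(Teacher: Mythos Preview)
Your proof is correct and follows essentially the same route as the paper: assume $F_{cd}(G)=1$, deduce $F_d(G)=1$, and invoke Proposition~\ref{df} to force $G\in\{P_1,P_2\}$, contradicting the order hypothesis. If anything, you are more explicit than the paper in justifying the passage from $F_{cd}(G)=1$ to $F_d(G)=1$ via the inequality $F_d(G)\le F_{cd}(G)$, and your optional self-contained argument (a single black vertex dominating a graph of order $\ge 3$ has at least two white neighbors, so the color-change rule never fires) is a nice addition.
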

\begin{proof}
    If possible assume that $F_{cd}(G)=1$. Then by proposition \ref{df} $G$ must be $P_1$ or $P_2$, both have order less than or equal to two. Hence the result.
\end{proof}
From the definition of connected dom-forcing number, dom-forcing number and results in \cite{df1} we can easily verify the following results.
\begin{prop}.
    \begin{itemize}
        \item  For a complete graph $K_n$, $F_d(K_n)=F_{cd}(K_n)=n-1$.
        \item For a complete bipartite graph $K_{m,n}$, $F_d(K_{m,n})=F_{cd}(K_{m,n})=m+n-2$, where $m,n \geq 2$.
        \item For the star graph $K_{1,n}, F_d(K_{1,n}) = F_{cd}(K_{1,n})=n$, for $n \geq 2$.
        \item For  the Petersen graph P, $F_d(P)=F_{cd}(P)=5$
        \item For a wheel graph $W_n$,$F_d(W_n)=F_{cd}(W_n)=3$. (A wheel graph, $W_n$, is a graph obtained by connecting a single vertex to all vertices of a cycle graph $C_{n-1}$.)
        \item For a hyper cube graph $Q_k$, $F_d(Q_k)=F_{cd}(Q_k)=2^{k-1}$.(A hypercube of dimension $k$, $Q_k$, has vertex set $\{0, 1\}^k$, with vertices adjacent when they differ in exactly one coordinate).
        \item For a Coconut tree graph $CT(m,n)$, $F_{cd}[CT(m,n)]=m+n-2$.(A Coconut tree graph $CT(m,n)$ is the graph obtained from the path $P_m$ by appending `n' new pendant edges at an end vertex of $P_m$.)
        \item For a Helm graph $H_m$, $F_{cd}(H_m)=m+1$.(A helm graph is a graph that is created by attaching a pendant edge to each vertex of an n-wheel graph's cycle, and is denoted by $H_m$, where $m \geq 4$).
    \end{itemize}
\end{prop}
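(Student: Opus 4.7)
I would separate the eight families by the structure of each claim. Bullets one through six assert $F_d(G)=F_{cd}(G)$ together with an explicit value; the already-established inequality $F_d(G)\le F_{cd}(G)$ combined with the values of $F_d$ recorded in \cite{df1} supplies the lower bound for free, so for these bullets it suffices to exhibit a set of the stated cardinality that is connected, dominating, and zero forcing. Bullets seven and eight (coconut tree and helm) list only $F_{cd}$, so for these I must argue both an upper bound by construction and a matching lower bound directly.

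For the classical families the constructions are short. In $K_n$, any $(n-1)$-subset induces $K_{n-1}$, which is connected and dominating, and each black vertex has a unique white neighbor, forcing in one step. In $K_{m,n}$ with $m,n\ge 2$, delete one vertex from each part; the surviving $K_{m-1,n-1}$ is connected and dominates both deleted vertices, and any surviving vertex has a unique white neighbor on the opposite side. In $K_{1,n}$, take the center together with $n-1$ leaves, giving a star on $n$ vertices that is connected, dominating, and in which the center immediately forces the one remaining leaf. For the wheel, take the hub together with two consecutive rim vertices: the hub dominates, and the black rim pair propagates around the cycle by successive unique-white-neighbor forces. For $Q_k$, take the subcube $\{x\in\{0,1\}^k : x_k=0\}$, which is isomorphic to $Q_{k-1}$, has size $2^{k-1}$, is connected and dominating, and in which every vertex forces its last-coordinate partner in one parallel step. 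For the Petersen graph I would exhibit a concrete connected $5$-vertex set, for instance three consecutive outer vertices together with their two attached inner vertices, and verify the forcing and domination by direct inspection.

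For the coconut tree $CT(m,n)$ with spine $v_1v_2\cdots v_m$ and pendants $w_1,\ldots,w_n$ at $v_m$, the proposed connected dom-forcing set is $\{v_2,v_3,\ldots,v_m\}\cup\{w_1,\ldots,w_{n-1}\}$, of size $m+n-2$; it is patently connected, dominates the only missing vertices $v_1$ and $w_n$, and both $v_2$ and $v_m$ have a unique white neighbor to force. For the helm $H_m$ I would analogously take the hub together with the rim and one additional pendant, building a connected set of size $m+1$ whose rim vertices simultaneously force their pendants. The main obstacle is the lower bound in these last two cases, since the generic inequality $F_{cd}(G)\ge n-l$ from the max-leaf identity is not tight. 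I would argue it directly by a charging argument: each pendant $w_i$ (respectively pendant in the helm) has a unique neighbor $v_m$ (respectively a single rim vertex), and $v_m$ can force at most one white neighbor during the whole process, so at least $n-1$ of the pendants must belong to the initial set; on the other hand, any connected dominating set of $CT(m,n)$ must contain the spine $v_2,\ldots,v_m$, giving at least $m-1$ additional required vertices and yielding the bound $m+n-2$. A parallel pendant-plus-spine counting step handles the helm.
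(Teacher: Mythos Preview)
Your plan is correct and matches what the paper does: the paper offers no detailed argument here, only the sentence that the values ``can easily [be] verif[ied]'' from the definition of $F_{cd}$, the inequality $F_d\le F_{cd}$, and the $F_d$ values in \cite{df1}, which is precisely your strategy of importing the lower bounds from \cite{df1} and exhibiting explicit connected dom-forcing sets for the upper bounds. One small slip to fix in the helm case: under the paper's convention $H_m$ has $m$ rim vertices, so the hub together with the rim already has size $m+1$ and is a connected dom-forcing set (each rim vertex then has its pendant as unique white neighbor); the ``one additional pendant'' in your description is superfluous and would push the size to $m+2$.
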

The connected dom-forcing number of a graph and its sub graphs are not comparable. Consider the following theorems.
\begin{prop}
    Let $G$ be a graph. If $H$ is a sub graph of $G$, then $F_{cd}(H)\leq F_{cd}(G)$ is not true in general.

\end{prop}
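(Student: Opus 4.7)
The plan is to disprove a universal inequality, so it suffices to exhibit a single counterexample: a graph $G$ and a subgraph $H$ of $G$ for which $F_{cd}(H) > F_{cd}(G)$. The conceptual heuristic is that subgraphs are obtained by deleting vertices/edges, and such deletions can destroy short forcing chains or concentrated domination structure, forcing a larger connected dom-forcing set on $H$ than on $G$.

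The natural place to look is among dense graphs with a small $F_{cd}$ that contain a sparse spanning-like subgraph with large $F_{cd}$. The wheel supplies a ready-made example. Specifically, I would take $G = W_n$ with $n$ large, because the list of values already recorded in the previous proposition gives $F_{cd}(W_n) = 3$ (the central hub contributes a lot of domination and forcing power). The rim of the wheel is a cycle $C_{n-1}$, and by the proposition on cycles, $F_{cd}(C_{n-1}) = (n-1) - 2 = n-3$, provided $n-1 \geq 4$.

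Choosing, say, $n = 7$, we get $G = W_7$ with $F_{cd}(G) = 3$, and $H = C_6$ as a subgraph of $G$ with $F_{cd}(H) = 4$. Since $4 > 3$, the inequality $F_{cd}(H) \leq F_{cd}(G)$ fails, which proves the proposition. If one wants the gap to be arbitrarily large, the same pair $(W_n, C_{n-1})$ gives $F_{cd}(H) - F_{cd}(G) = n - 6 \to \infty$ as $n \to \infty$.

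There is essentially no obstacle here: the work is entirely in naming the example and quoting the two values from the earlier enumeration. The only thing to double-check is that $C_{n-1}$ is genuinely a subgraph of $W_n$ (which is immediate from the definition of the wheel as the join of $K_1$ with $C_{n-1}$) and that the indexing conventions used in the cycle proposition match the one in the wheel definition.
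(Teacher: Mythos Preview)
Your proposal is correct and takes essentially the same approach as the paper: the paper also uses a wheel graph $G=W_n$ (with $F_{cd}=3$) and its rim cycle $H=C_{n-1}$ (with $F_{cd}=n-3$) as the counterexample, only with a larger value of $n$. Your observation that the gap can be made arbitrarily large is a nice addition.
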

\begin{proof}
    Consider the wheel graph $G=W_{16}$ given below
    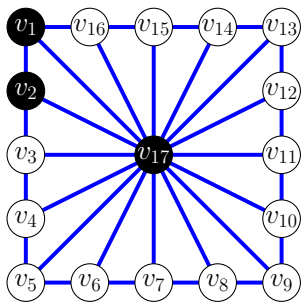
\begin{figure}[H]
\definecolor{ffffff}{rgb}{1,1,1}
\begin{tikzpicture}[scale=0.85]
\draw[line width=1.5pt,color=blue,step=.5cm,] (-1,6)-- (-1,5);
\draw[line width=1.5pt,color=blue,step=.5cm,] (-1,5)-- (-1,4);
\draw[line width=1.5pt,color=blue,step=.5cm,] (-1,4)-- (-1,3);
\draw[line width=1.5pt,color=blue,step=.5cm,] (-1,3)-- (-1,2);
\draw[line width=1.5pt,color=blue,step=.5cm,] (-1,2)-- (0,2);
\draw[line width=1.5pt,color=blue,step=.5cm,] (0, 2)-- (1,2);
\draw[line width=1.5pt,color=blue,step=.5cm,] (1,2)-- (2,2);
\draw[line width=1.5pt,color=blue,step=.5cm,] (2,2)-- (3,2);
\draw[line width=1.5pt,color=blue,step=.5cm,] (3,2)-- (3,3);
\draw[line width=1.5pt,color=blue,step=.5cm,] (3,3)-- (3,4);
\draw[line width=1.5pt,color=blue,step=.5cm,] (3,4)-- (3,5);
\draw[line width=1.5pt,color=blue,step=.5cm,] (3,5)-- (3,6);
\draw[line width=1.5pt,color=blue,step=.5cm,] (3,6)-- (2,6);
\draw[line width=1.5pt,color=blue,step=.5cm,] (2,6)-- (1,6);
\draw[line width=1.5pt,color=blue,step=.5cm,] (1,6)-- (0,6);
\draw[line width=1.5pt,color=blue,step=.5cm,] (0,6)-- (-1,6);

\draw[line width=1.5pt,color=blue,step=.5cm,] (-1,6)-- (1,4);
\draw[line width=1.5pt,color=blue,step=.5cm,] (-1,5)-- (1,4);
\draw[line width=1.5pt,color=blue,step=.5cm,] (-1,4)-- (1,4);
\draw[line width=1.5pt,color=blue,step=.5cm,] (-1,3)-- (1,4);
\draw[line width=1.5pt,color=blue,step=.5cm,] (-1,2)-- (1,4);
\draw[line width=1.5pt,color=blue,step=.5cm,] (0, 2)-- (1,4);
\draw[line width=1.5pt,color=blue,step=.5cm,] (1,2)-- (1,4);
\draw[line width=1.5pt,color=blue,step=.5cm,] (2,2)-- (1,4);
\draw[line width=1.5pt,color=blue,step=.5cm,] (3,2)-- (1,4);
\draw[line width=1.5pt,color=blue,step=.5cm,] (3,3)-- (1,4);
\draw[line width=1.5pt,color=blue,step=.5cm,] (3,4)-- (1,4);
\draw[line width=1.5pt,color=blue,step=.5cm,] (3,5)-- (1,4);
\draw[line width=1.5pt,color=blue,step=.5cm,] (3,6)-- (1,4);
\draw[line width=1.5pt,color=blue,step=.5cm,] (2,6)-- (1,4);
\draw[line width=1.5pt,color=blue,step=.5cm,] (1,6)-- (1,4);
\draw[line width=1.5pt,color=blue,step=.5cm,] (0,6)-- (1,4);

\node [draw,circle  ,fill=black,   text=ffffff, font=\huge, inner sep=0pt,minimum size=5mm] (3)  at (-1,6)  {\scalebox{.5}{$v_1$}};
\node [draw,circle  ,fill=black,   text=ffffff, font=\huge, inner sep=0pt,minimum size=5mm] (3)  at (-1,5)  {\scalebox{.5}{$v_2$}};
\node [draw,circle  ,fill=black,   text=ffffff, font=\huge, inner sep=0pt,minimum size=5mm] (3)  at (1,4)  {\scalebox{.5}{$v_{17}$}};
\node [draw,circle  ,fill=ffffff,   text=black, font=\huge, inner sep=0pt,minimum size=5mm] (3)  at (-1,4)  {\scalebox{.5}{$v_3$}};
\node [draw,circle  ,fill=ffffff,   text=black, font=\huge, inner sep=0pt,minimum size=5mm] (3)  at (-1,3)  {\scalebox{.5}{$v_4$}};
\node [draw,circle  ,fill=ffffff,   text=black, font=\huge, inner sep=0pt,minimum size=5mm] (3)  at (-1,2)  {\scalebox{.5}{$v_5$}};
\node [draw,circle  ,fill=ffffff,   text=black, font=\huge, inner sep=0pt,minimum size=5mm] (3)  at (0,2)  {\scalebox{.5}{$v_6$}};
\node [draw,circle  ,fill=ffffff,   text=black, font=\huge, inner sep=0pt,minimum size=5mm] (3)  at (1,2)  {\scalebox{.5}{$v_7$}};
\node [draw,circle  ,fill=ffffff,   text=black, font=\huge, inner sep=0pt,minimum size=5mm] (3)  at (2,2)  {\scalebox{.5}{$v_8$}};
\node [draw,circle  ,fill=ffffff,   text=black, font=\huge, inner sep=0pt,minimum size=5mm] (3)  at (3,2)  {\scalebox{.5}{$v_{9}$}};
\node [draw,circle  ,fill=ffffff,   text=black, font=\huge, inner sep=0pt,minimum size=5mm] (3)  at (3,3)  {\scalebox{.5}{$v_{10}$}};
\node [draw,circle  ,fill=ffffff,   text=black, font=\huge, inner sep=0pt,minimum size=5mm] (3)  at (3,4)  {\scalebox{.5}{$v_{11}$}};
\node [draw,circle  ,fill=ffffff,   text=black, font=\huge, inner sep=0pt,minimum size=5mm] (3)  at (3,5)  {\scalebox{.5}{$v_{12}$}};
\node [draw,circle  ,fill=ffffff,   text=black, font=\huge, inner sep=0pt,minimum size=5mm] (3)  at (3,6)  {\scalebox{.5}{$v_{13}$}};
\node [draw,circle  ,fill=ffffff,   text=black, font=\huge, inner sep=0pt,minimum size=5mm] (3)  at (2,6)  {\scalebox{.5}{$v_{14}$}};
\node [draw,circle  ,fill=ffffff,   text=black, font=\huge, inner sep=0pt,minimum size=5mm] (3)  at (1,6)  {\scalebox{.5}{$v_{15}$}};
\node [draw,circle  ,fill=ffffff,   text=black, font=\huge, inner sep=0pt,minimum size=5mm] (3)  at (0,6)  {\scalebox{.5}{$v_{16}$}};

\end{tikzpicture}
\caption{The wheel graph $W_{16}$}

\end{figure}

Let $S_1=\{v_1, v_2, v_{17}\}$. Then $S_1$ is the minimum connected dom-forcing set. Hence $F_{cd}(G)=3$. Now consider a subgraph $H=C_{16}$ of $G$ given below.
    \begin{figure}[h]
\definecolor{ffffff}{rgb}{1,1,1}
\begin{tikzpicture}[scale=0.85]
\draw[line width=1.5pt,color=blue,step=.5cm,] (-1,6)-- (-1,5);
\draw[line width=1.5pt,color=blue,step=.5cm,] (-1,5)-- (-1,4);
\draw[line width=1.5pt,color=blue,step=.5cm,] (-1,4)-- (-1,3);
\draw[line width=1.5pt,color=blue,step=.5cm,] (-1,3)-- (-1,2);
\draw[line width=1.5pt,color=blue,step=.5cm,] (-1,2)-- (0,2);
\draw[line width=1.5pt,color=blue,step=.5cm,] (0, 2)-- (1,2);
\draw[line width=1.5pt,color=blue,step=.5cm,] (1,2)-- (2,2);
\draw[line width=1.5pt,color=blue,step=.5cm,] (2,2)-- (3,2);
\draw[line width=1.5pt,color=blue,step=.5cm,] (3,2)-- (3,3);
\draw[line width=1.5pt,color=blue,step=.5cm,] (3,3)-- (3,4);
\draw[line width=1.5pt,color=blue,step=.5cm,] (3,4)-- (3,5);
\draw[line width=1.5pt,color=blue,step=.5cm,] (3,5)-- (3,6);
\draw[line width=1.5pt,color=blue,step=.5cm,] (3,6)-- (2,6);
\draw[line width=1.5pt,color=blue,step=.5cm,] (2,6)-- (1,6);
\draw[line width=1.5pt,color=blue,step=.5cm,] (1,6)-- (0,6);
\draw[line width=1.5pt,color=blue,step=.5cm,] (0,6)-- (-1,6);

\node [draw,circle  ,fill=black,   text=ffffff, font=\huge, inner sep=0pt,minimum size=5mm] (3)  at (-1,6)  {\scalebox{.5}{$v_1$}};
\node [draw,circle  ,fill=black,   text=ffffff, font=\huge, inner sep=0pt,minimum size=5mm] (3)  at (-1,5)  {\scalebox{.5}{$v_2$}};

\node [draw,circle  ,fill=black,   text=ffffff, font=\huge, inner sep=0pt,minimum size=5mm] (3)  at (-1,4)  {\scalebox{.5}{$v_3$}};
\node [draw,circle  ,fill=black,   text=ffffff, font=\huge, inner sep=0pt,minimum size=5mm] (3)  at (-1,3)  {\scalebox{.5}{$v_4$}};
\node [draw,circle  ,fill=black,   text=ffffff, font=\huge, inner sep=0pt,minimum size=5mm] (3)  at (-1,2)  {\scalebox{.5}{$v_5$}};
\node [draw,circle  ,fill=black,   text=ffffff, font=\huge, inner sep=0pt,minimum size=5mm] (3)  at (0,2)  {\scalebox{.5}{$v_6$}};
\node [draw,circle  ,fill=black,   text=ffffff, font=\huge, inner sep=0pt,minimum size=5mm] (3)  at (1,2)  {\scalebox{.5}{$v_7$}};
\node [draw,circle  ,fill=black,   text=ffffff, font=\huge, inner sep=0pt,minimum size=5mm] (3)  at (2,2)  {\scalebox{.5}{$v_8$}};
\node [draw,circle  ,fill=black,   text=ffffff, font=\huge, inner sep=0pt,minimum size=5mm] (3)  at (3,2)  {\scalebox{.5}{$v_{9}$}};
\node [draw,circle  ,fill=black,   text=ffffff, font=\huge, inner sep=0pt,minimum size=5mm] (3)  at (3,3)  {\scalebox{.5}{$v_{10}$}};
\node [draw,circle  ,fill=black,   text=ffffff, font=\huge, inner sep=0pt,minimum size=5mm] (3)  at (3,4)  {\scalebox{.5}{$v_{11}$}};
\node [draw,circle  ,fill=black,   text=ffffff, font=\huge, inner sep=0pt,minimum size=5mm] (3)  at (3,5)  {\scalebox{.5}{$v_{12}$}};
\node [draw,circle  ,fill=black,   text=ffffff, font=\huge, inner sep=0pt,minimum size=5mm] (3)  at (3,6)  {\scalebox{.5}{$v_{13}$}};
\node [draw,circle  ,fill=black,   text=ffffff, font=\huge, inner sep=0pt,minimum size=5mm] (3)  at (2,6)  {\scalebox{.5}{$v_{14}$}};
\node [draw,circle  ,fill=ffffff,   text=black, font=\huge, inner sep=0pt,minimum size=5mm] (3)  at (1,6)  {\scalebox{.5}{$v_{15}$}};
\node [draw,circle  ,fill=ffffff,   text=black, font=\huge, inner sep=0pt,minimum size=5mm] (3)  at (0,6)  {\scalebox{.5}{$v_{16}$}};

\end{tikzpicture}
\caption{The Cycle $C_{16}$}

\end{figure}
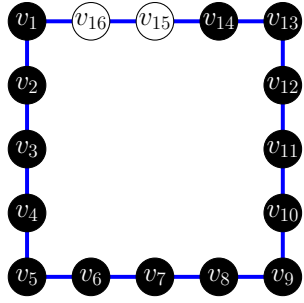

Let $S_2=\{v_1, v_2,\cdots, v_{14}\}$. Then $S_2$ is the minimum connected dom-forcing set. Hence $F_{cd}(H)=14$.Therefore, the assertion follows.
\end{proof}
\begin{prop}
    Let $G$ be a graph. If $H$ is a sub graph of $G$, then $F_{cd}(H)\geq F_{cd}(G)$ is not true in general.

\end{prop}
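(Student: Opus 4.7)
The plan is to mirror the strategy of the previous proposition and exhibit an explicit counterexample: a graph $G$ together with a subgraph $H$ of $G$ satisfying $F_{cd}(H) < F_{cd}(G)$. Since the proposition only asserts that the inequality $F_{cd}(H) \geq F_{cd}(G)$ fails in general, a single well-chosen pair is enough, and no structural theory needs to be developed.

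The construction I would use takes $G = K_n$ for some $n \geq 4$ and lets $H$ be a Hamiltonian path in $G$, i.e., $H \cong P_n$, obtained as the spanning subgraph with edge set $\{v_1v_2, v_2v_3, \ldots, v_{n-1}v_n\}$. From the enumerated list of known values recorded earlier in this section we have $F_{cd}(K_n) = n-1$, while the earlier proposition on paths gives $F_{cd}(P_n) = n-2$. Hence $F_{cd}(H) = n-2 < n-1 = F_{cd}(G)$, which establishes the claim. To match the format of the companion proposition, I would accompany the argument with a figure of, say, $K_5$ drawn with the five Hamiltonian-path edges highlighted (and the remaining chord edges shown faintly) so that the spanning subgraph $P_5$ is visually apparent.

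The only step that takes any thought is the choice of the example; once it is fixed, verification is immediate from results already proved in the paper, so there is no real obstacle. As a robustness check, the non-spanning pair $G = K_n$ and $H = K_2 \cong P_2$ (a single edge of $G$) also works for every $n \geq 3$: by Proposition \ref{df} we have $F_d(P_2) = 1$, and the singleton dom-forcing set of $P_2$ is trivially connected, so $F_{cd}(H) = 1 < n-1 = F_{cd}(G)$. Either example suffices to complete the proof.
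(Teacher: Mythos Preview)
Your proof is correct and follows the same strategy as the paper's own argument: exhibit an explicit pair $(G,H)$ with $H$ a subgraph of $G$ and $F_{cd}(H) < F_{cd}(G)$. The only difference is the choice of example. The paper uses a particular tree $G$ on seven vertices (two adjacent interior vertices, one carrying two pendants and the other carrying three) with $F_{cd}(G)=5$, and removes two of the three pendants at one end to obtain a subtree $H$ on five vertices with $F_{cd}(H)=3$. Your pair $(K_n,P_n)$ is arguably cleaner, since both values $F_{cd}(K_n)=n-1$ and $F_{cd}(P_n)=n-2$ are already recorded earlier in the section, so no fresh computation is required; the paper, by contrast, has to justify its values by inspection of the figures. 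Either example settles the proposition.
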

\begin{proof}
 Consider the  graph $G$ given below

 \begin{figure}[h]
\definecolor{ffffff}{rgb}{1,1,1}
\begin{tikzpicture}[scale=1.5]
    
 \draw[line width=1.5pt,color=blue,step=.5cm,] (-1,0.75) -- (0,1);   
 \draw[line width=1.5pt,color=blue,step=.5cm,] (-0.25,0) -- (0,1); 
 \draw[line width=1.5pt,color=blue,step=.5cm,] (0,1) -- (1.5,1.5);
 \draw[line width=1.5pt,color=blue,step=.5cm,] (2.75,1.25) -- (1.5,1.5);
 \draw[line width=1.5pt,color=blue,step=.5cm,] (2.75,2) -- (1.5,1.5);
 \draw[line width=1.5pt,color=blue,step=.5cm,] (2.25,2.75) -- (1.5,1.5);
 
\node [draw,circle  ,fill=ffffff,   text=black, font=\huge, inner sep=0pt,minimum size=4mm] (3)  at (-1,0.75)  {\scalebox{.4}{$1 $}};
\node [draw,circle  ,fill=black,   text=ffffff, font=\huge, inner sep=0pt,minimum size=4mm] (3)  at (0,1)  {\scalebox{.4}{$3 $}};
\node [draw,circle  ,fill=black,   text=ffffff, font=\huge, inner sep=0pt,minimum size=4mm] (3)  at (-0.25,0)  {\scalebox{.4}{$2 $}};
\node [draw,circle  ,fill=black,   text=ffffff, font=\huge, inner sep=0pt,minimum size=4mm] (3)  at (1.5,1.5)  {\scalebox{.4}{$4 $}};
\node [draw,circle  ,fill=black,   text=ffffff, font=\huge, inner sep=0pt,minimum size=4mm] (3)  at (2.75,1.25)  {\scalebox{.4}{$7 $}};
\node [draw,circle  ,fill=ffffff,   text=black, font=\huge, inner sep=0pt,minimum size=4mm] (3)  at (2.75,2)  {\scalebox{.4}{$6 $}};
\node [draw,circle  ,fill=black,   text=ffffff, font=\huge, inner sep=0pt,minimum size=4mm] (3)  at (2.25,2.75)  {\scalebox{.4}{$5 $}};

\end{tikzpicture}
\caption {The Graph G} 

\end {figure}
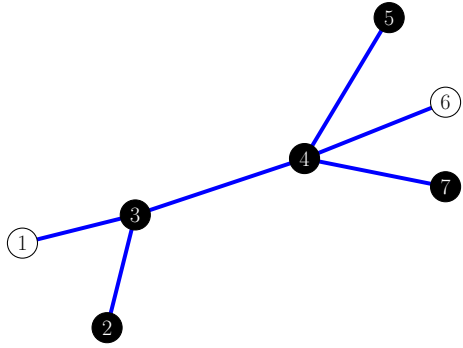

Let $S_1=\{2,3,4,5,7\}$. Then $S_1$ is the minimum connected dom-forcing set. Hence $F_{cd}(G)=5$. Now consider a subgraph $H$ of $G$ given below.
 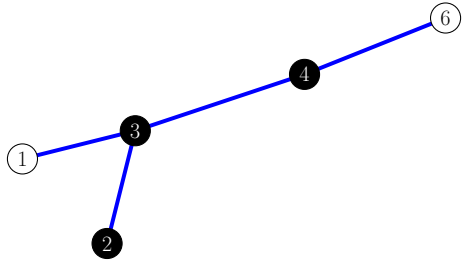
\begin{figure}[h]
\definecolor{ffffff}{rgb}{1,1,1}
\begin{tikzpicture}[scale=1.5]
    
 \draw[line width=1.5pt,color=blue,step=.5cm,] (-1,0.75) -- (0,1);   
 \draw[line width=1.5pt,color=blue,step=.5cm,] (-0.25,0) -- (0,1); 
 \draw[line width=1.5pt,color=blue,step=.5cm,] (0,1) -- (1.5,1.5);
 
 \draw[line width=1.5pt,color=blue,step=.5cm,] (2.75,2) -- (1.5,1.5);

\node [draw,circle  ,fill=ffffff,   text=black, font=\huge, inner sep=0pt,minimum size=4mm] (3)  at (-1,0.75)  {\scalebox{.4}{$1 $}};
\node [draw,circle  ,fill=black,   text=ffffff, font=\huge, inner sep=0pt,minimum size=4mm] (3)  at (0,1)  {\scalebox{.4}{$3 $}};
\node [draw,circle  ,fill=black,   text=ffffff, font=\huge, inner sep=0pt,minimum size=4mm] (3)  at (-0.25,0)  {\scalebox{.4}{$2 $}};
\node [draw,circle  ,fill=black,   text=ffffff, font=\huge, inner sep=0pt,minimum size=4mm] (3)  at (1.5,1.5)  {\scalebox{.4}{$4 $}};

\node [draw,circle  ,fill=ffffff,   text=black, font=\huge, inner sep=0pt,minimum size=4mm] (3)  at (2.75,2)  {\scalebox{.4}{$6 $}};

\end{tikzpicture}
\caption {The Graph H} 

\end {figure}

Let $S_2=\{2,3,4\}$. Then $S_2$ is the minimum connected dom-forcing set. Hence $F_{cd}(H)=3$.Therefore, the assertion follows.
\end{proof}

For a tree  $T$, $F_{cd}(T)\geq F_{cd}(T_2)$. where $T_2$ is any sub tree of $T$.

\begin{prop}
    Let $T$ be a tree with $n\geq  3$ vertices and $m$ leaves. Among these $m $ leaves $n_i$ leaves are adjacent to a vertex $v_i$ of $T$, for $i=1,2, \cdots, r$. Then the connected dom-forcing number of $T$ is $n-r$.
\end{prop}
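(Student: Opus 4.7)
The plan is to prove $F_{cd}(T) = n - r$ by establishing matching upper and lower bounds, after identifying the precise structure that a minimum connected dom-forcing set of a tree must have.

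For the upper bound, I will exhibit an explicit set $S$ of size $n - r$. Take all $n - m$ internal (non-leaf) vertices of $T$ together with, for each leaf-parent $v_i$, any $n_i - 1$ of its $n_i$ leaf neighbors, leaving exactly one leaf of $v_i$ outside the set. The cardinality is $(n - m) + \sum_{i=1}^{r}(n_i - 1) = (n - m) + (m - r) = n - r$. Connectivity holds because the internal vertices of a tree induce a subtree and every leaf placed in $S$ is adjacent to its parent $v_i \in S$. Domination holds because each omitted leaf has its parent in $S$. For zero forcing, every $v_i$ starts with exactly one white neighbor, namely the omitted leaf, and can immediately force it; afterwards no white vertex remains.

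For the lower bound, let $S$ be any connected dom-forcing set of $T$. I will show $|S| \geq n - r$ via two structural observations. First, every internal vertex $u$ of $T$ lies in $S$: since $u$ has degree at least two in a tree, $T - u$ decomposes into at least two components, and the connected set $S \subseteq V \setminus \{u\}$ must sit inside a single such component; vertices of a remaining component then have neither themselves nor any neighbor in $S$, violating domination. Second, for each leaf-parent $v_i$ at most one of its $n_i$ leaves lies outside $S$: since all internal vertices are in $S$, the only possibly white vertices are leaves, and each leaf $\ell$ has only $v_i$ as neighbor, so $\ell$ can be forced solely by $v_i$. If two or more leaves of $v_i$ were outside $S$, then $v_i$ would permanently have at least two white neighbors and could never apply the colour-change rule, so the zero forcing process would stall.

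Combining these observations, any connected dom-forcing set contains all $n - m$ internal vertices plus at least $m - r$ leaves, giving $|S| \geq (n - m) + (m - r) = n - r$, matching the construction. The main delicate step is the second observation: one must rule out any indirect route by which a second white leaf at $v_i$ could be coloured, and this is precisely where the tree hypothesis is essential, since a leaf has no alternative neighbour to serve as a forcing vertex. The first observation relies on internal vertices of a tree being cut vertices, which is also the reason the analogous lower bound fails for general graphs, consistent with the subgraph incomparability examples given earlier in the paper.
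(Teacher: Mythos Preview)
Your proof is correct and follows essentially the same strategy as the paper: show that every connected dom-forcing set of $T$ must contain all $n-m$ internal vertices (since each is a cut vertex, a connected dominating set cannot miss one) and, for each leaf-parent $v_i$, at least $n_i-1$ of its leaves (since only $v_i$ can force those leaves). Your write-up is in fact more complete than the paper's, which omits the explicit upper-bound construction and the justification for why internal vertices are forced into the set, whereas you supply both.
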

\begin{proof}
    Given $T$ is a graph with $n$ vertices and $m$ leaves, hence every connected domination set contains all $n-m$ non-leaf vertices. Also, the vertex $v_1$ is adjacent to $n_1$ leaves, to force all these $n_1$ leaves, we must color $n_1-1$ leaves black initially. Therefore every connected dom-forcing set must contain $n-m+(n_1-1)+(n_2-1)+\cdots +(n_r-1)$ vertices. But $n_1+\cdots+n_r=m$, hence $$F_{cd}(T)=n-r.$$
\end{proof}
The above theorem is illustrated in figure \ref{cdt}.
\begin{figure}[h]
\definecolor{ffffff}{rgb}{1,1,1}
\begin{tikzpicture}[scale=1.5]

 \foreach \y in {0,...,4} {\draw[line width=1.5pt,color=blue,step=.5cm,] (\y,1) -- (\y+1,1);}
   
 \draw[line width=1.5pt,color=blue,step=.5cm,] (0,2) -- (1,1);   
 \draw[line width=1.5pt,color=blue,step=.5cm,] (0,0) -- (1,1); 
 \draw[line width=1.5pt,color=blue,step=.5cm,] (3,2) -- (3,1);
 \draw[line width=1.5pt,color=blue,step=.5cm,] (3,0) -- (3,1);
 \draw[line width=1.5pt,color=blue,step=.5cm,] (3,0) -- (2,0);
 \draw[line width=1.5pt,color=blue,step=.5cm,] (3,0) -- (4,0);
 \draw[line width=1.5pt,color=blue,step=.5cm,] (4,2) -- (3,2);
 \draw[line width=1.5pt,color=blue,step=.5cm,] (5,2) -- (4,1);
 \foreach \y in {0,...,5} {
      \node[draw, circle, fill=black,minimum size=4pt] (\y,1) at (\y,1) { };
      }
\node [draw,circle  ,fill=ffffff,   text=black, font=\huge, inner sep=0pt,minimum size=4mm] (3)  at (0,2)  {\scalebox{.4}{$v_1 $}};
\node [draw,circle  ,fill=black,   text=ffffff, font=\huge, inner sep=0pt,minimum size=4mm] (3)  at (0,1)  {\scalebox{.4}{$v_2 $}};
\node [draw,circle  ,fill=black,   text=ffffff, font=\huge, inner sep=0pt,minimum size=4mm] (3)  at (0,0)  {\scalebox{.4}{$v_3 $}};
\node [draw,circle  ,fill=black,   text=ffffff, font=\huge, inner sep=0pt,minimum size=4mm] (3)  at (1,1)  {\scalebox{.4}{$v_4 $}};
\node [draw,circle  ,fill=black,   text=ffffff, font=\huge, inner sep=0pt,minimum size=4mm] (3)  at (2,1)  {\scalebox{.4}{$v_5 $}};
\node [draw,circle  ,fill=black,   text=ffffff, font=\huge, inner sep=0pt,minimum size=4mm] (3)  at (3,1)  {\scalebox{.4}{$v_6 $}};
\node [draw,circle  ,fill=black,   text=ffffff, font=\huge, inner sep=0pt,minimum size=4mm] (3)  at (3,2)  {\scalebox{.4}{$v_7 $}};
\node [draw,circle  ,fill=ffffff,   text=black, font=\huge, inner sep=0pt,minimum size=4mm] (3)  at (4,2)  {\scalebox{.4}{$v_8 $}};
\node [draw,circle  ,fill=black,   text=ffffff, font=\huge, inner sep=0pt,minimum size=4mm] (3)  at (3,0)  {\scalebox{.4}{$v_9 $}};
\node [draw,circle  ,fill=black,   text=ffffff, font=\huge, inner sep=0pt,minimum size=4mm] (3)  at (2,0)  {\scalebox{.4}{$v_{10} $}};
\node [draw,circle  ,fill=ffffff,   text=black, font=\huge, inner sep=0pt,minimum size=4mm] (3)  at (4,0)  {\scalebox{.4}{$v_{11} $}};
\node [draw,circle  ,fill=ffffff,   text=black, font=\huge, inner sep=0pt,minimum size=4mm] (3)  at (5,2)  {\scalebox{.4}{$v_{13} $}};
\node [draw,circle  ,fill=black,   text=ffffff, font=\huge, inner sep=0pt,minimum size=4mm] (3)  at (5,1)  {\scalebox{.4}{$v_{14} $}};
\node [draw,circle  ,fill=black,   text=ffffff, font=\huge, inner sep=0pt,minimum size=4mm] (3)  at (4,1)  {\scalebox{.4}{$v_{12} $}};
\end{tikzpicture}
\caption {A tree $T$ with 14 vertices and 8 leaves. Among these leaves $v_1, v_2, v_3$ are adjacent to $v_4$; $v_8$ is adjacent to $v_7$; $v_{10}, v_{11}$ are adjacent to $v_9$ and $v_{13}, v_{14}$ are adjacent to $v_{12}$. Hence $F_{cd}(T)=14-4=10$.} 
\label{cdt}
\end {figure}
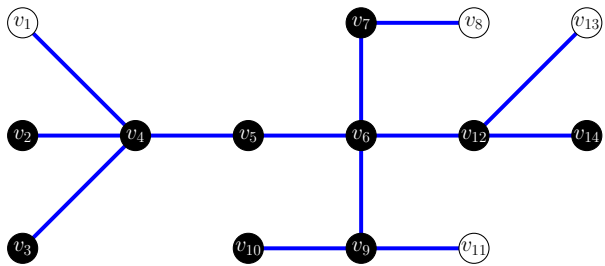

The join of two graphs is their union with all the edges $gh$ where $g \in V (G)$ and $h \in V (H)$. The join of graphs G and H is denoted by $G\vee H$. Let us check the connected dom-forcing number of $G\vee H$.

 \begin {prop}
 Let $G$ and $H$ be two connected graphs. Then
$$F_{cd}(G \vee H)=F_d(G \vee H)=Z(G \vee H) = min\{|H| + Z(G); |G| + Z(H)\}.$$
\end {prop}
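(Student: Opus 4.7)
The plan is to establish the four-way equality by sandwiching. Proposition~\ref{bound} and the definitions already give $Z(G\vee H) \le F_d(G\vee H)\le F_{cd}(G\vee H)$, so it suffices to exhibit a connected dom-forcing set of size $\min\{n_H+Z(G),\,n_G+Z(H)\}$ and to prove the matching lower bound $Z(G\vee H)\ge \min\{n_H+Z(G),\,n_G+Z(H)\}$; the four quantities then collapse.

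For the upper bound, let $Z_G$ be a minimum zero forcing set of $G$ and take $S=V(H)\cup Z_G$; the analogous construction with $G$ and $H$ swapped handles the other term of the minimum. Every $z\in Z_G$ is adjacent in $G\vee H$ to every vertex of $V(H)$, so $\langle S\rangle$ is connected (because $H$ is) and $V(H)\subseteq S$ dominates $V(G)$ via the join edges; thus $S$ is a connected dominating set. With all of $V(H)$ initially black, every black vertex in $V(G)$ has its only potential white neighbours inside $V(G)$, so the forcing process on $S$ in $G\vee H$ restricted to $V(G)$ coincides with ordinary zero forcing in $G$ starting from $Z_G$, which blackens $V(G)$. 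Hence $S$ is a connected dom-forcing set of size $n_H+Z(G)$ and $F_{cd}(G\vee H)\le \min\{n_H+Z(G),\,n_G+Z(H)\}$.

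For the lower bound, let $Z$ be any zero forcing set of $G\vee H$. The key structural observation is that each vertex of $V(G)$ is adjacent to all of $V(H)$ and vice versa, so whenever both $V(G)\smin B$ and $V(H)\smin B$ contain at least two white vertices, no black vertex can have a unique white neighbour and no colour change is possible. Since white counts are monotone non-increasing under forcing, the initial colouring must satisfy $|V(G)\smin Z|\le 1$ or $|V(H)\smin Z|\le 1$; WLOG the latter. If $V(H)\subseteq Z$, then $V(H)$ stays entirely black throughout, and a $V(H)$-vertex can force a $V(G)$-vertex only when exactly one vertex of $V(G)$ is still white; but in a connected graph, ordinary zero forcing can never halt at exactly one white vertex (its unique white neighbour would itself be forceable). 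Hence $Z\cap V(G)$ must itself be a zero forcing set of $G$, giving $|Z|\ge n_H + Z(G)$.

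The remaining case $V(H)\smin Z=\{v_0\}$ is the technical heart and the main obstacle. Splitting on who first forces $v_0$: if some $u\in V(G)\cap Z$ does so, then $u$'s only white neighbour at that moment is $v_0$, hence $N_G(u)\subseteq Z$; after this force $V(H)\subseteq B$, and the previous connected-graph argument again shows $Z\cap V(G)$ is a zero forcing set of $G$, now also containing the closed neighbourhood $N_G[u]$. A short exchange lemma---that any zero forcing set of a connected graph containing some closed neighbourhood $N_G[v]$ has size at least $Z(G)+1$ (since $v$ is forcing-redundant and can effectively be swapped out)---then yields $|Z|\ge (n_H-1)+(Z(G)+1)=n_H+Z(G)$. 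If instead a $V(H)\cap Z$-vertex forces $v_0$, then $V(G)\subseteq B$ at that moment; since a $V(G)\cap Z$-vertex cannot force another $V(G)$-vertex while $v_0$ is white, all prior $V(G)$-forces came from $V(H)\cap Z$-vertices, each requiring $|V(G)\smin B|=1$ at its turn, which pins $|V(G)\smin Z|\le 1$ and hence $|Z|\in\{n_G+n_H-1,\,n_G+n_H-2\}$. Using $Z(G)\le n_G-1$ and $Z(H)\le n_H-1$ (with equality only for complete graphs, the only case where $G\vee H$ is itself complete and the formula holds trivially), this too meets the required bound. Combining everything,
$$\min\{n_H+Z(G),\,n_G+Z(H)\}\le Z(G\vee H)\le F_d(G\vee H)\le F_{cd}(G\vee H)\le \min\{n_H+Z(G),\,n_G+Z(H)\},$$
so all four quantities coincide.
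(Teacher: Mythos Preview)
Your argument is correct, but it does far more work than the paper's. The equality $Z(G\vee H)=\min\{|H|+Z(G),\,|G|+Z(H)\}$ is a known result in the zero-forcing literature, and the paper simply quotes it. What remains is to show $F_{cd}(G\vee H)=Z(G\vee H)$, and for this the paper uses a one-line observation you never exploit: any zero forcing set $Z$ of $G\vee H$ must contain a vertex from each of $V(G)$ and $V(H)$ (for $|V(G)|,|V(H)|\ge 2$, a set lying entirely in one side leaves every black vertex with at least two white neighbours on the other side, so no force is ever possible). Once $Z$ meets both sides, any $g\in Z\cap V(G)$ dominates $V(H)$ via the join edges and any $h\in Z\cap V(H)$ dominates $V(G)$, while the complete bipartite join between $Z\cap V(G)$ and $Z\cap V(H)$ makes $\langle Z\rangle$ connected. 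Thus \emph{every} zero forcing set of $G\vee H$ is already a connected dom-forcing set, giving $F_{cd}(G\vee H)\le Z(G\vee H)$ immediately; the reverse inequality is trivial.

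Your route instead re-derives the lower bound $Z(G\vee H)\ge\min\{|H|+Z(G),\,|G|+Z(H)\}$ from scratch via a case analysis. This is valid but unnecessary here. One remark on your ``exchange lemma'' (that a zero forcing set of a connected graph containing some $N_G[v]$ has size at least $Z(G)+1$): it is true, but the hint ``$v$ can be swapped out'' is slightly off. What actually works is to remove a \emph{neighbour} $u\in N_G(v)$ from $Z$; then in $Z\smin\{u\}$ the vertex $v$ has $u$ as its unique white neighbour, so $v\to u$ immediately restores the black set to $Z$ and the original forcing proceeds. Hence $Z\smin\{u\}$ is a zero forcing set of size $|Z|-1$. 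With that detail filled in your proof stands, but the paper's two-line argument is the intended one.
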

\begin{proof}
    Every zero forcing set dominates $G\vee H$, since by the definition of join, one vertex from both G and H dominates $G\vee H$. Also it is connected. Therefore $$ F_{cd}(G \vee H)=F_d(G \vee H)=Z(G \vee H) = min\{|H| + Z(G); |G| + Z(H)\}.$$
\end{proof}
\section{Connected dom-forcing number of product of graphs }
In this section we discuss the connected dom-forcing number of different type of product of graphs, we start with the corona product.

The corona of $G$ with $H$, denoted by $G\odot H$, is the graph of order $|V (G)||V (H)| +|V (G)|$ obtained by taking one copy of $G$ and $|V (G)|$ copies of $H$, and joining all vertices in the $i^{th}$ copy of $H$ to the i-th vertex of $G$ \cite{zpr}.

Let G be a graph with $|V(G)| = n$, where $V(G)$ denotes the vertex set of $G$, and let $H_1, \cdots, H_n$ be the graphs. Define the graph $G\prec H_1, \cdots, H_n \succ $ as the graph obtained by joining all the vertices of the graph $H_i$ to the i-th vertex of G, where $i = 1, \cdots , n$. Note that $|H_i|$ can be zero, in which case no extra vertices will be joined to the vertex i. We call the graph $G\prec H_1, \cdots, H_n \succ $ a generalized corona of G with $H_1, \cdots, H_n$ \cite{z5}.
\begin{theorem} \cite{czero}
    For graph G with  $|V(G)| = n$ and  $H_1, \cdots, H_n$ be graphs with order at lest two, we have $$Z_c(G\prec H_1, \cdots, H_n \succ)= |V(G)|+\sum_{i=1}^{n} Z(H_i) .$$ 
\end{theorem}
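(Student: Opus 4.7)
The plan is to prove the equality by matching upper and lower bounds, exploiting the cut structure of the generalized corona, in which each set $V(H_i)$ is attached to the rest of the graph through the single vertex $i\in V(G)$. I would tacitly assume $G$ and each $H_i$ are connected, which is effectively needed for the stated formula to hold.

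For the upper bound I would take $S:=V(G)\cup\bigcup_{i=1}^{n} Z_i$, where $Z_i$ is a minimum zero forcing set of $H_i$, so $|S|=|V(G)|+\sum_{i=1}^{n} Z(H_i)$. The induced subgraph $\langle S\rangle$ is connected because $\langle V(G)\rangle$ is connected in $G$ and each vertex of $Z_i\subseteq V(H_i)$ is joined to $i\in V(G)\subseteq S$. To check that $S$ zero-forces the corona, note that each $i\in V(G)$ belongs to $S$ and is therefore black from the outset; hence for any $v\in V(H_i)$ the ``unique white neighbour'' test depends only on the edges of $H_i$, and the forcing inside each $V(H_i)$ proceeds exactly like a zero-forcing run of $H_i$ starting from $Z_i$, eventually colouring the whole corona black.

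For the lower bound let $S$ be any connected zero forcing set and put $S_i:=S\cap V(H_i)$. I would first show $V(G)\subseteq S$: if some $S_i$ were empty then $V(H_i)$ would begin all white, and since the only outside neighbour of $V(H_i)$ is $i$ and $|V(H_i)|\geq 2$, the vertex $i$ would always have at least two white neighbours in $V(H_i)$ and never be able to force into it, a contradiction; hence $S_i\neq\emptyset$ for every $i$, and because any path in $\langle S\rangle$ from $S_i$ to $S_j$ with $j\neq i$ must pass through $i$, connectivity forces $i\in S$ (the case $n=1$ admits a short direct argument). Next I would show $|S_i|\geq Z(H_i)$: since $i$ is always black, its colour contributes nothing to the unique-white-neighbour test used by any $v\in V(H_i)$ trying to force another $H_i$-vertex, so the forcing inside $V(H_i)$ driven by $V(H_i)$-vertices is identical to a standalone zero-forcing run of $H_i$ from $S_i$; let $F_i$ be its terminal black set. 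The only extra capability in the corona is for $i$ itself to force directly into $V(H_i)$, which requires at most one remaining white vertex there. However $F_i$ cannot terminate with exactly one white vertex $w$: by connectedness of $H_i$, $w$ has a neighbour $v\in F_i$ for which $w$ is then the unique white $H_i$-neighbour, so $v$ would still force $w$, contradicting termination. Thus either $F_i=V(H_i)$, in which case $S_i$ is already a zero forcing set of $H_i$, or $|V(H_i)\setminus F_i|\geq 2$, in which case $i$ is powerless and $V(H_i)$ cannot be fully coloured. Only the former is consistent with $S$ zero-forcing the corona, so $|S_i|\geq Z(H_i)$, and summing yields $|S|\geq|V(G)|+\sum_{i=1}^{n} Z(H_i)$.

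The main obstacle is precisely the step $|S_i|\geq Z(H_i)$: one must show that, despite the universal vertex $i$ attached to $V(H_i)$, the global forcing process cannot do more work inside any single copy $V(H_i)$ than an independent zero-forcing run on $H_i$ would. The bottleneck at $i$ (which can help only when a connected $H_i$-forcing process would already be able to finish) and the connectedness of $H_i$ (which rules out the awkward case of a terminal state with a single leftover white vertex for $i$ to rescue) are the crucial ingredients.
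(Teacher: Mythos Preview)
The paper does not give its own proof of this theorem: it is quoted from \cite{czero} and then used as a black box in the proof of the next result on $F_{cd}$. So there is no argument in the paper to compare yours against.

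Your proposed proof is essentially correct and is the natural one. The upper bound via $S=V(G)\cup\bigcup_i Z_i$ is exactly what one expects (and is the set the paper itself invokes in the next theorem). For the lower bound, your two key observations are both right: first, each $S_i$ must be nonempty, and for $n\geq 2$ the cut-vertex $i$ then lies on every path in $\langle S\rangle$ from $S_i$ to $S_j$, so $V(G)\subseteq S$; second, with $i$ black from the start, the forces internal to $V(H_i)$ coincide with $H_i$-forces, and the only additional move is a single force by $i$ which requires exactly one white vertex left in $V(H_i)$ --- a state that cannot be the terminus of the $H_i$-closure when $H_i$ is connected. Your remark that connectedness of each $H_i$ (and of $G$) is needed for the formula is well taken; for instance $G=K_2$ with $H_1=H_2=2K_1$ already violates the stated equality.

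The only point that deserves more than a parenthesis is the case $n=1$. There the inclusion $V(G)\subseteq S$ genuinely fails: for $H_1=P_3$ the two adjacent endpoints of $H_1$ form a connected zero forcing set of $K_1\vee H_1$ that avoids the apex. The cleanest patch is to observe that $G\prec H_1\succ = K_1\vee H_1$ and use $Z_c\geq Z$ together with $Z(K_1\vee H_1)=\min\{|H_1|+Z(K_1),\,1+Z(H_1)\}=1+Z(H_1)$. Alternatively, argue directly that if the apex is not in $S$ then no force can fire until some $v\in S$ with $N_{H_1}(v)\subseteq S$ forces the apex; after that your same analysis shows $S$ is a zero forcing set of $H_1$, and one checks that a minimum such set cannot contain a vertex together with its full $H_1$-neighbourhood.
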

\begin{theorem} 
    For graph G with  $|V(G)| = n$ and  $H_1, \cdots, H_n$ be graphs with order at lest two, we have $$F_{cd}(G\prec H_1, \cdots, H_n \succ) =Z_c(G\prec H_1, \cdots, H_n \succ)= |V(G)|+\sum_{i=1}^{n} Z(H_i) .$$ 
\end{theorem}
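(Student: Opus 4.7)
The plan is to squeeze $F_{cd}(G\prec H_1,\ldots,H_n\succ)$ between two copies of $Z_c(G\prec H_1,\ldots,H_n\succ)$. The inequality $Z_c\leq F_{cd}$ is free from Proposition~\ref{bound}, and the preceding theorem already evaluates $Z_c=|V(G)|+\sum_{i=1}^n Z(H_i)$, so all the work lies in exhibiting a connected dom-forcing set achieving this size.

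The natural candidate is $S=V(G)\cup\bigcup_{i=1}^n Z_i$, where each $Z_i\subseteq V(H_i)$ is a fixed minimum zero forcing set of $H_i$. I would then verify three quick facts. First, $|S|=|V(G)|+\sum_i Z(H_i)$ since the unions are disjoint. Second, $S$ dominates $G\prec H_1,\ldots,H_n\succ$, because every vertex of $V(H_i)$ is adjacent to $v_i\in V(G)\subseteq S$ by definition of the generalized corona. Third, $\langle S\rangle$ is connected, since the induced subgraph on $V(G)$ is connected ($G$ itself is connected), and each $Z_i$ is attached to $V(G)$ through the corona edges joining $v_i$ to every vertex of $H_i$.

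The real content is in showing that $S$ is a zero forcing set. My plan is to argue that the color-change process inside each copy of $H_i$ runs exactly as it would in the isolated graph $H_i$ started from $Z_i$. The key observation is that the one extra neighbor $v_i$ that a vertex $u\in V(H_i)$ acquires in the corona is permanently black from the outset, and therefore never plays the role of the unique white neighbor in the color-change rule; so every step valid in $H_i$ remains valid in the corona, and $Z_i$ forces all of $V(H_i)$. Combined with the fact that $V(G)\subseteq S$ is black from the start, the entire corona turns black, certifying that $S$ is a connected zero forcing set. I expect this verification to be the only delicate point of the proof — once the independence of the forcing processes across the copies of $H_i$ is justified, the two bounds coincide and the theorem follows.
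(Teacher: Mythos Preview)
Your proposal is correct and follows essentially the same route as the paper: take $S=V(G)\cup\bigcup_i Z_i$, note (via the preceding theorem) that it is a minimum connected zero forcing set, and observe that it also dominates since each vertex of $H_i$ is adjacent to $v_i\in V(G)$; combined with $Z_c\le F_{cd}$ this pins down $F_{cd}$. Your write-up is in fact more careful than the paper's, which simply asserts that $S$ is a minimum connected zero forcing set and dominates without spelling out the forcing argument inside each $H_i$.
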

\begin{proof}
    Let $Z_i$ be a minimum zero forcing set of $H_i$ for all $1 \leq i \leq n$. We can easily seen that, $V(G)\cup Z_1 \cup \cdots \cup Z_n$ is a minimum connected zero forcing set of the $G\prec H_1, \cdots, H_n \succ $ . From the definition of generalised corona product, this set dominates $G\prec H_1, \cdots, H_n \succ $. Hence $$F_{cd}(G\prec H_1, \cdots, H_n \succ) =Z_c(G\prec H_1, \cdots, H_n \succ)= |V(G)|+\sum_{i=1}^{n} Z(H_i) .$$ 
\end{proof}
For graph G with  $|V(G)| = n$ and for any graph $H$, from the definition we can see that $G\odot H= G\prec H, \cdots, H \succ$. Therefore from the above theorem we can easily derive the inequality.
\begin{corollary}\label{corona}
    For graph G with  $|V(G)| = n$ and for any graph $H$ with order at least two, $$F_{cd}(G\odot H)= n(1+ Z(H)).$$
\end{corollary}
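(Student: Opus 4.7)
The plan is to derive this statement as an immediate specialization of the preceding theorem on the generalized corona $G \prec H_1, \ldots, H_n \succ$. The key observation, which I would record first, is that from the definitions the corona product $G \odot H$ coincides exactly with the generalized corona in the case $H_1 = H_2 = \cdots = H_n = H$: both constructions attach a fresh isomorphic copy of $H$ (joined completely to the corresponding vertex of $G$) to each vertex of $G$. So $G \odot H = G \prec H, \ldots, H \succ$ as labelled graphs.

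Having made this identification, I would simply substitute $H_i = H$ for every $i$ into the conclusion of the theorem. Since $H$ is assumed to have order at least two, the hypothesis of the theorem is satisfied by each $H_i$, and its conclusion specializes to
$$F_{cd}(G \odot H) = |V(G)| + \sum_{i=1}^{n} Z(H_i) = n + n \cdot Z(H) = n\bigl(1 + Z(H)\bigr),$$
which is exactly the asserted equality.

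There is no substantive obstacle: the only thing requiring verification is the identification $G \odot H = G \prec H, \ldots, H \succ$, which is a purely definitional comparison between the corona product and the generalized corona restricted to identical factors. Consequently the corollary is obtained with essentially no additional work beyond that already performed for the theorem, and the author's remark that the result ``easily derives'' from the theorem is fully justified.
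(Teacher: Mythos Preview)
Your proposal is correct and mirrors the paper's own argument exactly: the paper notes that $G\odot H = G\prec H,\ldots,H\succ$ by definition and then specializes the preceding theorem with $H_i=H$ for all $i$ to obtain $F_{cd}(G\odot H)=n+nZ(H)=n(1+Z(H))$.
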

Let G and H be two graphs of order $n_1$ and $n_2$ respectively. For any integer $k \geq 2$, we define the graph $ G \odot ^k H $ recursively from $G\odot H$ as $G\odot ^k H = (G \odot ^{k-1} H)\odot H$. It is also noted that $|G\odot ^{k-1}H| = n_1(n_2 +1)^{k-1}$ and $|G\odot^k H| = |G\odot^{k-1}H|+n_1n_2(n_2+1)^{k-1}$.

By repeated application of Corollary \ref{corona}, we get the following result.

\begin{theorem}
    Let G and H be two graphs of order $n_1$ and $n_2,(n_2\geq 2),$ respectively. Then $$F_{cd}(G\odot^k H)= n_1(n_2 +1)^{k-1}(Z(H)+1).$$
\end{theorem}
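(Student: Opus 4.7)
The plan is to view $G\odot^k H$ as a single corona product and apply Corollary \ref{corona} exactly once. By the recursive definition recorded just before the theorem, $G\odot^k H = (G\odot^{k-1} H)\odot H$, so setting $G' := G\odot^{k-1} H$ identifies $G\odot^k H$ with the corona $G'\odot H$. Since $|V(H)| = n_2 \geq 2$, Corollary \ref{corona} applies with $G'$ playing the role of the base graph and gives
$$F_{cd}(G\odot^k H) \;=\; F_{cd}(G'\odot H) \;=\; |V(G')|\,\bigl(1+Z(H)\bigr).$$

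The remaining step is purely arithmetic: substitute the explicit order $|V(G')| = |V(G\odot^{k-1} H)| = n_1(n_2+1)^{k-1}$, which is exactly the cardinality formula noted in the excerpt immediately before the statement. Multiplying yields $n_1(n_2+1)^{k-1}(Z(H)+1)$, as required. If a fully inductive presentation is preferred, a one-line induction on $k$ works equivalently: the base case $k=1$ is literally Corollary \ref{corona} (since $(n_2+1)^0 = 1$), and the inductive step is the substitution above. Notably, the induction does not need to carry along the value of $F_{cd}(G\odot^{k-1} H)$—only the size of its vertex set is ever used.

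I expect essentially no combinatorial obstacle. The only items to verify are routine: the hypothesis $n_2 \geq 2$ required by Corollary \ref{corona} is preserved at every level because $H$ is unchanged throughout the recursion, and $G\odot^{k-1} H$ must be a connected graph so that $F_{cd}$ is meaningfully defined. The latter follows by a short induction from the elementary observation that $G\odot H$ is connected whenever $G$ is connected, since every vertex in each appended copy of $H$ is adjacent to the corresponding vertex of $G$.
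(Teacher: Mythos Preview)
Your proposal is correct and is essentially the paper's approach: the paper's proof is the single sentence ``By repeated application of Corollary \ref{corona}, we get the following result,'' and you have unpacked exactly that. Your observation that a \emph{single} application of Corollary \ref{corona} to $G' = G\odot^{k-1}H$ already suffices (since the formula depends only on $|V(G')|$ and $Z(H)$, and the order $|V(G')| = n_1(n_2+1)^{k-1}$ is stated just before the theorem) is a nice sharpening of the paper's ``repeated application'' phrasing.
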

\begin{theorem} \label{corona2}
    If $G$  is a graph with   $|V(G)| = n$, then $F_{cd}(G\odot K_1)= n.$
\end{theorem}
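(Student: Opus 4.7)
The strategy is a matching upper and lower bound, both essentially forced by the pendant structure of $G\odot K_1$. Label the vertices of $G$ as $v_1,\ldots,v_n$ and let $v_i'$ denote the pendant vertex attached to $v_i$ in $G\odot K_1$, so $v_i'$ has the single neighbor $v_i$.

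For the upper bound, I would show that $S=V(G)\subseteq V(G\odot K_1)$ is itself a connected dom-forcing set. The induced subgraph $\langle S\rangle$ is just $G$, which is connected by our standing assumption. Every pendant $v_i'$ is dominated by its unique neighbor $v_i\in S$, and the vertices of $G$ dominate themselves, so $S$ is a connected dominating set. For the zero forcing property, after coloring $S$ black, each $v_i$ has exactly one white neighbor in $G\odot K_1$, namely $v_i'$, so simultaneously every $v_i$ forces $v_i'$ and the whole graph turns black. Hence $F_{cd}(G\odot K_1)\le n$.

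For the lower bound, let $S$ be any connected dom-forcing set of $G\odot K_1$. I claim $V(G)\subseteq S$. Fix $v_i\in V(G)$; the pendant $v_i'$ must be dominated, so either $v_i\in S$ or $v_i'\in S$. Suppose for contradiction that $v_i\notin S$ but $v_i'\in S$. The only neighbor of $v_i'$ in $G\odot K_1$ is $v_i$, which is not in $S$, so $v_i'$ is an isolated vertex of $\langle S\rangle$. Provided $|S|\ge 2$, this contradicts connectivity of $\langle S\rangle$, forcing $v_i\in S$. The case $|S|=1$ only arises when $G\odot K_1$ has order $\le 2$, i.e. $n=1$ and $G\odot K_1=K_2$, where directly $F_{cd}(K_2)=1=n$ by Proposition \ref{df}. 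In all other cases we obtain $V(G)\subseteq S$, so $|S|\ge n$.

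Combining the two bounds gives $F_{cd}(G\odot K_1)=n$. The argument is short and the only place that requires any care is the lower bound, where one must rule out the degenerate possibility of a connected dom-forcing set containing a pendant $v_i'$ without its support $v_i$; this is handled by the isolated-vertex observation together with the trivial base case $n=1$. No serious obstacle is expected.
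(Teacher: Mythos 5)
Your proof is correct and follows essentially the same route as the paper: the upper bound comes from taking $V(G)$ itself as a connected dom-forcing set, and the lower bound from observing that a connected dominating set of $G\odot K_1$ must contain every vertex of $G$. You simply spell out the details the paper leaves implicit (the isolated-pendant argument for connectivity and the degenerate $n=1$ case), which is a welcome but not structurally different refinement.
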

\begin{proof}
    Let $\{v_1, v_2, \cdots, v_n\}$ be the vertex set of G. Then $A=\{v_1, \cdots, v_n\}$ be a connected dom forcing set for $G\odot K_1$. Therefore $F_{cd} (G\odot K_1)\leq n$.

    Note that every connected dominating set contains all vertices of G. Therefore  $F_{cd}(G\odot K_1)\geq n$.

    From the above we have $F_{cd}(G\odot K_1)= n$.
\end{proof}
By repeated application of Theorem \ref{corona2}, we get the following result
\begin{corollary}
    For graph G with  $|G| = n$, then $F_{cd}(G\odot^k K_1)= 2^{k-1}n.$
\end{corollary}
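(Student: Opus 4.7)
The plan is to argue by induction on $k$, using Theorem \ref{corona2} as the engine and the order formula stated just before the corollary ($|G \odot^{k-1} H| = n_1(n_2+1)^{k-1}$) to keep track of the vertex counts at each stage. The base case $k=1$ reduces to the identity $F_{cd}(G \odot K_1) = n$, which is exactly Theorem \ref{corona2}, and this also matches the formula since $2^{0} n = n$.

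For the inductive step, suppose the result holds for $k-1$, so that $F_{cd}(G\odot^{k-1} K_1) = 2^{k-2} n$ (this value is not actually needed; what matters is the order of the graph). By the recursive definition,
\[
G\odot^k K_1 \;=\; \bigl(G\odot^{k-1} K_1\bigr) \odot K_1.
\]
Applying Theorem \ref{corona2} to the graph $H' := G\odot^{k-1} K_1$, we get $F_{cd}(H' \odot K_1) = |V(H')|$. The order formula with $n_2 = 1$ gives $|V(H')| = n(1+1)^{k-1} = 2^{k-1} n$, and the conclusion $F_{cd}(G\odot^k K_1) = 2^{k-1} n$ follows.

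The main (and really only) subtlety is bookkeeping: one has to be a bit careful that Theorem \ref{corona2} applies when the first argument of the corona is not $G$ itself but the iterated corona $G\odot^{k-1} K_1$, which is still a graph of the required form; and that the order formula cited from the paragraph preceding the corollary is used with $H = K_1$ so that $n_2 = 1$. No new zero-forcing or domination analysis is required beyond what is encapsulated in Theorem \ref{corona2}, so the proof is essentially a one-line induction once these two ingredients are lined up.
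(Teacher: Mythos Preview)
Your proof is correct and mirrors exactly what the paper does: it simply says ``By repeated application of Theorem~\ref{corona2}'' and states the corollary without further argument. Your induction on $k$, together with the order formula $|G\odot^{k-1}K_1|=2^{k-1}n$, is precisely how that repeated application unfolds, so there is nothing to add.
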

Let G be a connected graph with vertices $v_1, v_2, \cdots, v_n$ and let $H$ be a sequence of n rooted graphs $H_1,H_2,\cdots,H_n$. The rooted product of G and H is defined as the graph obtained by identifying the root of $H_i, 1 \leq i \leq n$, with the ith vertex of $G$ for all i. This graph is denoted by G(H) and is known as the rooted product of G by H \cite{root}.
\begin{theorem}
    Let G be a graph with order $n$ and $H$ be a graph of order at least two, rooted with any vertex and its connected dom forcing number is $F_{cd}(H)$. Let the rooted vertex is an element of the minimum connected dom-forcing set. Then the connected dom forcing number of the rooted product of $G$ by $H$,$$F_{cd}[G(H)]= n (F_{cd}(H)).$$
\end{theorem}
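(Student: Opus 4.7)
The plan is to prove the equality by establishing matching upper and lower bounds, exploiting the fact that in the rooted product each copy of $H$ is joined to the rest of $G(H)$ through a single cut-vertex, namely its root. Let $v_1,\ldots,v_n$ be the vertices of $G$ and let $H_i$ denote the $i$-th copy of $H$, so that $V(G(H))$ is partitioned as $\bigsqcup_{i=1}^n V(H_i)$ with $v_i$ playing the role of the root of $H_i$.

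For the upper bound, I would fix a minimum connected dom-forcing set $D$ of $H$ that contains the root (available by hypothesis) and form $\widehat{D}=\bigcup_{i=1}^n D_i$, where $D_i\subseteq V(H_i)$ is the corresponding copy of $D$. Connectedness of $\langle \widehat{D}\rangle$ is immediate because each $D_i$ is connected and contains $v_i$, while the vertices $v_1,\dots,v_n$ are themselves connected through the copy of $G$ sitting inside $G(H)$. Domination is clear since each $D_i$ dominates $V(H_i)$. The zero-forcing verification rests on the observation that a non-root vertex of $H_i$ has all its $G(H)$-neighbours inside $V(H_i)$, while the root $v_i$'s only extra neighbours are the adjacent roots $v_j$, all of which lie in $\widehat{D}$ and thus remain black throughout the process. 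Hence the forcing rule in $G(H)$ starting from $\widehat{D}$ reproduces inside each $H_i$ exactly the forcing rule in $H$ starting from $D$, and $\widehat{D}$ forces all of $G(H)$. This yields $F_{cd}[G(H)]\le nF_{cd}(H)$.

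For the lower bound, let $S$ be any connected dom-forcing set of $G(H)$ and set $S_i=S\cap V(H_i)$. The case $n=1$ is trivial since $G(H)\cong H$, so assume $n\ge 2$. Because $|V(H)|\ge 2$, each $H_i$ has a non-root vertex whose neighbourhood lies entirely in $V(H_i)$, so $S_i\neq\emptyset$. Since $v_i$ is the unique cut-vertex separating $V(H_i)\setminus\{v_i\}$ from the rest of $G(H)$, if $v_i\notin S_i$ then $\langle S\rangle$ would split $S_i$ off from $S\setminus S_i$, contradicting its connectedness; hence $v_i\in S_i$ for every $i$. I would then verify that each $S_i$ is a connected dom-forcing set of $H_i$. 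Connectedness inside $H_i$ follows because any path in $\langle S\rangle$ joining two vertices of $S_i$ cannot leave $V(H_i)$ and return without repeating the cut-vertex $v_i$. Domination inside $H_i$ follows because $v_i\in S_i$ and non-root vertices of $H_i$ can only be dominated from within $V(H_i)$. The zero-forcing property transfers by the mirror of the upper-bound argument: every $G(H)$-forcing that paints a vertex of $V(H_i)$ black must originate at a vertex of $V(H_i)$, and when that forcer is $v_i$ its extraneous $G$-neighbours are already-black roots, so the forcing remains valid inside $H_i$. Therefore $|S_i|\ge F_{cd}(H)$ for each $i$, and summing gives $|S|\ge nF_{cd}(H)$.

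The main obstacle in both directions is the zero-forcing step: one must argue that the additional $G$-edges incident to the roots never disrupt the ``unique white neighbour'' condition, either when simulating $H$-forcing inside $G(H)$ for the upper bound, or when extracting $H_i$-forcing from a $G(H)$-forcing for the lower bound. The prior observation that every root $v_j$ necessarily belongs to the set under consideration is what makes the bookkeeping go through, since it guarantees that those extraneous neighbours of $v_i$ are always black; once this is in hand the remaining checks reduce to routine work inside each individual copy.
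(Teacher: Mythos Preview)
Your argument follows the same two-sided plan as the paper---take the union of minimum connected dom-forcing sets of the copies for the upper bound, and bound $|S\cap V(H_i)|$ below by $F_{cd}(H)$ for the lower bound---but you supply the justifications the paper leaves implicit, in particular the cut-vertex argument forcing each root into $S$ and the transfer of the zero-forcing rule between $H_i$ and $G(H)$. Your lower bound is in fact the correct formalisation of what the paper sketches: the paper's literal claim that ``every connected dom-forcing set contains $CD_{f_{H_i}}$'' is not true as stated, whereas your conclusion that each $S_i$ is \emph{some} connected dom-forcing set of $H_i$ (hence $|S_i|\ge F_{cd}(H)$) is exactly what is needed.
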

\begin{proof}
    Let $CD_{f_{H_i}} $ be the minimum connected dom-forcing set for ith copy of $H$. Then $A= \bigcup_{i=1}^nCD_{f_{H_i}}$ form a connected dom-forcing set for rooted product of $G$ by $H$. Hence $$F_{cd}[G(H)]\leq n (F_{cd}(H)).$$
    Given that the rooted vertex is an element of the minimum connected dom-forcing set, every connected dom-forcing set contains $CD_{f_{H_i}} $ for all $i$. Hence $$F_{cd}[G(H)]\geq n (F_{cd}(H)).$$ From these we have $$F_{cd}[G(H)]= n (F_{cd}(H)).$$
\end{proof}

We know The Cartesian product of two graphs $G$ and $H$, is denoted by $G \Box H$, is the graph with vertex set $V (G) \times V (H)$ such that $(u, v)$ is adjacent to $(u', v')$ if and only if \\(1) $u = u'$ and $vv' \in E(H)$, or \\ (2) $v = v'$ and $uu' \in E(G)$.\\ The Cartesian product of two path graphs is known as the grid graph.

Let $p$ and $q$ be positive integers such that $p \leq q$. A $p \times q$ grid graph \cite{lie} $G_{p, q} = (V, E)$  is a graph where
$$V = \{(i, j)|1 \leq i \leq p, 1 \leq j \leq q\},$$
$$E = \{\{(i, j),(i, j + 1)\}|1 \leq i \leq p, 1 \leq j \leq q - 1\}$$
$$\hspace{1.25cm}\cup \{\{(i, j),(i + 1, j)\}|1 \leq i \leq p - 1, 1 \leq j \leq q\}.$$

The ladder graph $L_n$ is obtained by taking the cartesian product of path $P_n$ with the complete graph $K_2$.
\begin{theorem}\cite{lie} \label{c1}
    Connected domination number of the Ladder graph $L_n$ is 
   $$\gamma _c (L_n)=\gamma _c(G_{2,n})=\left \{ \begin{array}{ccc}
       2  &if & n=2,3 \\
       
       n  &if & n\geq 4\\
    \end{array} \right .$$
\end{theorem}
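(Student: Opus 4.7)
The plan is to prove matching upper and lower bounds on $\gamma_c(L_n)$. Label the vertices of $L_n$ so that $u_1,\ldots,u_n$ form the top row and $w_1,\ldots,w_n$ form the bottom row, with horizontal edges $u_iu_{i+1}$, $w_iw_{i+1}$ and vertical edges $u_iw_i$.

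For the upper bound I would exhibit small connected dominating sets (CDS). For $n=2$, $L_2\cong C_4$ and any edge is a CDS of size $2$; for $n=3$, the rung $\{u_2,w_2\}$ is connected and dominates the remaining four vertices; for $n\geq 4$, the top row $\{u_1,\ldots,u_n\}$ induces a path and each $w_i$ is adjacent to $u_i$, giving $\gamma_c(L_n)\leq n$. For the lower bound when $n=2,3$, a single vertex of $L_n$ dominates at most four vertices and, in $C_4$, misses its antipode, so $\gamma_c\geq 2$.

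For $n\geq 4$ I would invoke the identity $|V(G)|=\gamma_c(G)+\ell(G)$ recalled earlier, where $\ell(G)$ is the max leaf number, and reduce the problem to showing $\ell(L_n)\leq n$. Fix any spanning tree $T$ of $L_n$; since $\Delta(L_n)=3$, every $T$-degree lies in $\{1,2,3\}$, so if $\ell,s,t$ count vertices of each degree, the identities $\ell+s+t=2n$ and $\ell+2s+3t=2(2n-1)$ give $\ell=t+2$. Splitting the leaves by their $L_n$-degree as $\ell=\ell_2+\ell_3$ (with $\ell_2\leq 4$ because only the four corners have $L_n$-degree $2$), the double count $\sum_v(d_G(v)-d_T(v))=2(|E(L_n)|-|E(T)|)=2(n-1)$ yields $\ell_2+2\ell_3\leq 2(n-1)$, hence $\ell\leq n+1$.

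The hard step is excluding the extremal case $\ell=n+1$. Equality would force $\ell_2=4$, $s=0$, and $d_T(v)=d_G(v)$ for every non-leaf $v$, so all corners are leaves and every non-leaf middle vertex has all three of its incident edges in $T$. At column $1$ both $u_1,w_1$ must be leaves, and connectivity of $T$ then rules out making $u_1w_1$ the unique tree edge there, so $u_1u_2,w_1w_2\in T$ and $u_1w_1\notin T$; the symmetric statement holds at column $n$. A column-by-column propagation now forces a contradiction: if some interior column $i$ has $u_i$ a non-leaf but $w_i$ a leaf (or vice versa), then $w_i$ acquires exactly two forced tree edges, violating $s=0$; if both $u_i,w_i$ are non-leaves, the $4$-cycle $u_{i-1}u_iw_iw_{i-1}$ (once the forced edges at the adjacent column are included) closes inside $T$; and if neither is a non-leaf, the propagation disconnects $T$. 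For $n\geq 4$ one of these configurations must arise, contradicting $T$ being a spanning tree. This rules out $\ell=n+1$, giving $\ell(L_n)\leq n$ and hence $\gamma_c(L_n)\geq n$. The main obstacle is this last column-propagation step: the elementary degree-sum bounds are off by one, and closing the gap uses the ladder's narrow $4$-cycle structure in an essential way.
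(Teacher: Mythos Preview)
The paper does not prove this theorem; it is quoted from \cite{lie} as a known result (that reference treats maximum-leaf spanning trees of grid graphs), so there is no in-paper argument to compare against. Your route via the identity $|V|=\gamma_c+\ell$ and a direct bound on the max leaf number of $L_n$ is a sound, self-contained strategy, and the upper bounds together with the degree-counting that yields $\ell\le n+1$ are correct.

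The extremal step is not quite right as written. In your ``both non-leaves'' case you claim the 4-cycle $u_{i-1}u_iw_iw_{i-1}$ closes in $T$, but that requires $u_{i-1}w_{i-1}\in T$, which nothing forces; indeed for $i=2$ you already showed $u_1w_1\notin T$. The cycle that actually appears is on the \emph{other} side, $u_iu_{i+1}w_{i+1}w_i$, and only after you also analyse column $i+1$. Likewise, your ``one leaf, one non-leaf'' case asserts the leaf $w_i$ picks up two forced tree edges; this depends on $w_{i-1}w_i\in T$ having been forced by the propagation, which holds at $i=2$ (from the corner analysis) and is maintained whenever the previous column was the ``both non-leaves'' case, but that inductive hypothesis should be stated explicitly. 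With these corrections the argument does close: at column $2$ you have $u_1u_2,w_1w_2\in T$; the ``both leaves'' and ``one leaf'' cases are immediate contradictions, while ``both non-leaves'' adds $u_2w_2\in T$ and pushes both horizontal edges to column $3$, where each of the three cases now contradicts (the ``both non-leaves'' subcase produces the cycle $u_2u_3w_3w_2$). Hence $\ell(L_n)\le n$ for $n\ge4$ and the lower bound follows.
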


\begin{theorem}
    The connected dom-forcing number of the Ladder graph $L_n$ is 
    $$F_{cd}(L_n)=F_{cd}(G_{2,n})=n $$
\end{theorem}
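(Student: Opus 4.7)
The plan is to establish the equality $F_{cd}(L_n)=n$ by proving matching upper and lower bounds, exhibiting an explicit extremal set for one direction and invoking the connected domination lower bound from Proposition~\ref{bound} for the other.

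For the upper bound $F_{cd}(L_n)\le n$, label the vertices of $L_n = P_n \Box K_2$ as $(i,j)$ with $i\in\{1,2\}$ and $1\le j\le n$, so that horizontal edges join $(i,j)$ to $(i,j+1)$ and rungs join $(1,j)$ to $(2,j)$. I would exhibit the single rail $R=\{(1,j):1\le j\le n\}$ as a connected dom-forcing set. The induced subgraph $\langle R\rangle$ is a path $P_n$, hence connected; $R$ dominates $L_n$ because every vertex $(2,j)$ is adjacent via a rung to $(1,j)\in R$; and after coloring $R$ black, each vertex $(1,j)$ has exactly one white neighbor, namely its rung partner $(2,j)$, since the endpoints $(1,1)$ and $(1,n)$ see only one horizontal neighbor (which lies in $R$) and each interior vertex of $R$ sees two such neighbors (both in $R$). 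Hence every $(1,j)\to(2,j)$ simultaneously in one step, completing the forcing and giving $|R|=n$ as an upper bound.

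For the lower bound $F_{cd}(L_n)\ge n$, I would simply invoke Proposition~\ref{bound}(ii) together with Theorem~\ref{c1}, which for $n\ge 4$ yields
\[
F_{cd}(L_n)\ \ge\ \gamma_c(L_n)\ =\ n.
\]
For the remaining cases $n=2,3$ (where $\gamma_c(L_n)=2$ and the chain above does not close the gap), a brief direct verification suffices: $L_2\cong C_4$ and the cycle result already established earlier gives $F_{cd}(C_4)=2$; for $L_3$, one enumerates the connected vertex pairs (rail edges and rungs) and checks that no such pair is simultaneously dominating and zero forcing, since any dominating pair must include both a rail edge near a middle column and still miss a corner vertex on the opposite rail.

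The argument is essentially routine once the right extremal set is chosen, and I do not anticipate a real obstacle; the only subtlety is the handling of the small cases $n\in\{2,3\}$, where the connected-domination lower bound is not tight and a short ad hoc enumeration is needed. The main conceptual point is recognizing that a full rail is already dom-forcing, because in a ladder every rail vertex has a unique off-rail neighbor, which makes the forcing step trivial and removes any need for iterative forcing chains.
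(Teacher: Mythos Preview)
Your proposal is correct and follows essentially the same route as the paper: exhibit one full rail as a connected dom-forcing set for the upper bound, invoke $\gamma_c(L_n)=n$ from Theorem~\ref{c1} for the lower bound when $n\ge 4$, and treat $n=2$ via $L_2\cong C_4$ and $n=3$ by ruling out connected $2$-sets. One small slip in your $n=3$ discussion: the middle rung $\{(1,2),(2,2)\}$ \emph{is} a dominating connected pair (it misses no corner), so your stated reason is off; it fails instead because each of its vertices has two white neighbours and cannot force---the enumeration you propose still works, just for this corrected reason.
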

\begin{proof}
    The graph $G_{2,2}$ is cycle $C_4$ and we know that $F_{cd}(C_4)=2$.
    In the case of $G_{2,3}$, no minimum connected domination set cardinality 2 forces $G_{2,3}$. But $A=\{(1,1),(1,2),(1,3)\}$ be a dom-forcing set. Hence $F_{cd}(G_{2,3})=3 $. For $n\geq 4$, $A=\{(1,1),(1,2),\cdots, (1,n)\}$ form a connected dom-forcing set, which is minimum by theorem \ref{c1}. Hence $$F_{cd}(L_n)=F_{cd}(G_{2,n})=n. $$
\end{proof}
\begin{theorem}\cite{lie} \label{c2}
    Connected domination number of $3\times p$ grid graph $G_{3,p}$ is $\gamma _c(G_{3,p})=p$.
\end{theorem}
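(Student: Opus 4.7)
The plan is to prove $\gamma_c(G_{3,p}) = p$ by establishing matching upper and lower bounds. For the upper bound, I would exhibit the middle row $M = \{(2,j) : 1 \le j \le p\}$ as a connected dominating set: $M$ induces a path in $G_{3,p}$ since consecutive middle-row vertices are adjacent, and every non-middle-row vertex $(1,j)$ or $(3,j)$ is adjacent to $(2,j) \in M$. This immediately gives $\gamma_c(G_{3,p}) \le p$.

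For the lower bound, let $D$ be any connected dominating set, and project it onto columns by setting $J = \{j : D \cap C_j \neq \emptyset\}$, where $C_j$ denotes the $j$th column. The key structural fact is that $J$ is an interval $\{a, a+1, \ldots, b\} \subseteq \{1, \ldots, p\}$. Indeed, since the induced subgraph $G_{3,p}[D]$ is connected, any path in $G_{3,p}[D]$ between two vertices in columns $j_1 < j_2$ of $J$ must cross each intermediate column (because every edge of the grid changes the column index by at most one), forcing every such column into $J$.

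Next I would argue that at most one column on each side of $J$ can be empty. If $a \ge 3$, then columns $1$ and $2$ contain no vertex of $D$, but vertex $(2,1)$ has all of its neighbors $(1,1), (3,1), (2,2)$ inside columns $1$ and $2$, so it is not dominated, contradicting the assumption on $D$. Hence $a \le 2$, and symmetrically $b \ge p-1$.

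The main obstacle is converting these structural constraints into the bound $|D| \ge p$ in the boundary cases; here the argument splits. If $a=1$ and $b=p$, then $|D| \ge |J| = p$ immediately. If $a = 2$, then each vertex $(i,1)$ must be dominated from inside $D$, and its only candidate neighbor in $D$ is $(i,2)$; hence $\{(1,2),(2,2),(3,2)\} \subseteq D$, giving $|D| \ge 3 + (|J| - 1) = (p-1) + 2 \ge p$. The symmetric case $b = p-1$ works identically, and the overlap case $a = 2, b = p-1$ is handled by combining the two arguments (with a small direct check when $p = 3$, where columns $2$ coincides with $p-1$ and already forces $|D| \ge 3 = p$). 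In every case $|D| \ge p$, which together with the upper bound yields $\gamma_c(G_{3,p}) = p$.
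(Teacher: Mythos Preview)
The paper does not supply its own proof of this statement: the theorem is quoted verbatim from reference~\cite{lie} and used as a black box in the subsequent argument about $F_{cd}(G_{3,p})$. So there is no in-paper proof to compare your attempt against.

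That said, your argument is correct and complete. The upper bound via the middle row is standard, and your lower bound is sound: the projection of a connected set onto column indices is indeed an interval (grid edges change the column index by at most one), the domination constraint on $(2,1)$ forces $a\le 2$ (and symmetrically $b\ge p-1$), and when $a=2$ the three vertices of column~1 each have a unique neighbor in column~2, forcing the whole of column~2 into $D$. The case split at the end covers all possibilities, including the small-$p$ overlap. This is essentially how the result is proved in the cited source, via the equivalent max-leaf spanning tree formulation; your direct counting argument is a clean, self-contained alternative.
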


\begin{theorem}
    The connected dom-forcing number of $3\times p$ grid graph $G_{3, p}$ is 
    $$F_{cd}(G_{3,p})=p+1 $$
\end{theorem}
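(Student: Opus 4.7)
The plan is to prove matching upper and lower bounds. For the upper bound $F_{cd}(G_{3,p})\le p+1$, I would exhibit the explicit set
\[
D=\{(1,1)\}\cup\{(2,j)\,:\,1\le j\le p\}
\]
of size $p+1$. The induced subgraph $\langle D\rangle$ is a path through $(1,1),(2,1),(2,2),\ldots,(2,p)$, so $D$ is connected, and the middle row alone already dominates the whole grid. For zero forcing, $(1,1)$ has unique white neighbor $(1,2)$ and $(2,1)$ has unique white neighbor $(3,1)$; once these two forcings are done, the color propagates inductively along the top row by $(1,j)\to(1,j+1)$ and along the bottom row by $(3,j)\to(3,j+1)$ until every vertex is black.

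For the lower bound $F_{cd}(G_{3,p})\ge p+1$, Theorem \ref{c2} with Proposition \ref{bound} gives $F_{cd}(G_{3,p})\ge\gamma_c(G_{3,p})=p$, so it suffices to rule out equality. Assume for contradiction that $D$ is a connected dom-forcing set with $|D|=p$. Since each of the $2p$ vertices of $V\setminus D$ has a neighbor in $D$, there are at least $2p$ edges between $D$ and $V\setminus D$, and combined with $|E(\langle D\rangle)|\ge p-1$ from connectedness this gives
\[
\sum_{v\in D}\deg_G(v)\;=\;2|E(\langle D\rangle)|+\bigl|E(D,V\setminus D)\bigr|\;\ge\;4p-2.
\]
Since the maximum degree in $G_{3,p}$ is $4$ and the only degree-$4$ vertices are the $p-2$ interior middle-row vertices $\{(2,j):2\le j\le p-1\}$, the set $D$ must contain all of them; the two remaining ``extra'' vertices then carry the full deficit $4p-\sum\deg_G(v)\le 2$, and since each of them is not of degree $4$ and thus contributes deficit at least $1$, both extras have degree exactly $3$ in $G_{3,p}$.

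To finish, I would classify the pair of extras using that the four corners of $G_{3,p}$ must be dominated. Among the degree-$3$ vertices, only $(2,1)$ dominates both left corners $(1,1),(3,1)$ and only $(2,p)$ dominates both right corners $(1,p),(3,p)$, while each of $(1,2),(3,2),(1,p-1),(3,p-1)$ dominates only one corner. With just two extras available, for $p\ge 4$ the only option is $\{(2,1),(2,p)\}$, giving $D$ equal to the middle row; for $p=3$ the coincidence $p-1=2$ additionally allows $\{(1,2),(3,2)\}$, giving the middle column. In both cases every vertex of $D$ retains two white neighbors in $G_{3,p}$, so the color-change rule is never applicable and $D$ cannot be a zero-forcing set, the desired contradiction. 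The main obstacle I anticipate is the case analysis on the two extras together with the small exception at $p=3$; once it is completed, the uniform ``two white neighbors per vertex'' observation makes the failure of forcing immediate.
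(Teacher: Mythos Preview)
Your argument is correct and follows the same overall strategy as the paper: exhibit the set $\{(1,1)\}\cup\{(2,j):1\le j\le p\}$ for the upper bound, and show that no connected dominating set of size $p$ can zero force for the lower bound.

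The difference lies entirely in the lower bound. The paper simply asserts that the middle row is the \emph{unique} minimum connected dominating set and that it fails to force, without proving uniqueness. In fact that uniqueness claim is false for $p=3$, where the middle column $\{(1,2),(2,2),(3,2)\}$ is a second minimum connected dominating set; you correctly identify this exception. Your degree-sum inequality
\[
\sum_{v\in D}\deg_G(v)=2|E(\langle D\rangle)|+|E(D,V\setminus D)|\ge 2(p-1)+2p=4p-2
\]
is a clean way to force $D$ to contain all $p-2$ interior middle-row vertices and to pin down the two remaining degree-$3$ vertices via corner domination. This turns the paper's unproved (and slightly inaccurate) uniqueness assertion into a genuine classification, after which the ``every vertex of $D$ has two white neighbours'' observation finishes both cases uniformly. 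Your proof is therefore more complete than the one in the paper; the only small addition worth making explicit is the case $p=2$, where the same degree count immediately gives $D=\{(2,1),(2,2)\}$ and the conclusion follows identically.
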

\begin{proof}
    We can see that  $A=\{(2,1),(2,2),\cdots, (2,p)\}$ be a minimum connected domination set, and  $A$ is unique. The set $A$ cannot forces $G_{3,p}$. But $A\cup \{(1,1)\}$ form a connected dom-forcing set, which is minimum. Hence $$F_{cd}(G_{3,p})=p+1. $$
\end{proof}
\begin{theorem}\cite{cg} \label{g1}
    Connected domination number of $p\times q$ ($p\geq 4, q\geq 4$) grid graph $G_{p,q}$ is
    $$\gamma _c(G_{p,q})=\frac{pq-a'_{p,q}}{3}+\bar{r}'_{p,q}+c'_{p,q}.$$
    in which
    $$a'_{p,q}=p(\text{mod 3} ) . q(\text{mod 3} )$$
    $$\bar{r}'_{p,q}= \left \{ \begin{array}{cl}
       3  & p(\text{mod 3} ) . q(\text{mod 3} )=4   \\
         2  & p(\text{mod 3} ) . q(\text{mod 3} )=2   \\
          1  & p(\text{mod 3} ) . q(\text{mod 3} )=1   \\
           0  & p(\text{mod 3} ) . q(\text{mod 3} )=0   \\
    \end{array}
    \right.$$
       $$c'_{p,q}= \left \{ \begin{array}{ll}
       \min \{ \frac p 3, \frac q 3\}  & p(\text{mod 3} )=0 \text { and } q(\text{mod 3} )=0   \\
        \frac p 3  &  p(\text{mod 3} )=0 \text { and } q(\text{mod 3} )\neq 0    \\
          \frac q 3 & p(\text{mod 3} )\neq 0 \text { and } q(\text{mod 3} )= 0    \\
           \lfloor\frac p 3\rfloor+ \lfloor\frac q 3\rfloor -1 & p(\text{mod 3} )\neq 0 \text { and } q(\text{mod 3} )\neq 0   \\
    \end{array}
    \right.$$
\end{theorem}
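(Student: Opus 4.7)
The plan is to establish the formula by matching upper and lower bounds, with the case structure of the formula forcing a case analysis based on the residues $p \bmod 3$ and $q \bmod 3$. I would organize the argument so that the $\frac{pq - a'_{p,q}}{3}$ term comes from a natural periodic construction, the $\bar{r}'_{p,q}$ term from boundary corrections, and the $c'_{p,q}$ term from the cost of connecting the otherwise disconnected ``combs'' of vertices.

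For the upper bound, I would exhibit an explicit connected dominating set of the claimed size. The template I have in mind is a comb: take every third row of the grid (rows $2, 5, 8, \ldots$), each of which dominates its own row together with the two adjacent rows, and then add one ``spine'' column (or the shorter of a row/column spine, which explains the $\min\{p/3, q/3\}$ term) so the result is connected. When $p$ and $q$ are both divisible by $3$ this gives exactly $pq/3$ horizontal vertices plus $\min\{p/3,q/3\}$ spine vertices, matching the main term and $c'_{p,q}$. When $p \bmod 3$ or $q \bmod 3$ is nonzero, the bottom/right fringe strip is not fully covered by a full-width horizontal segment, and I would add the minimal boundary patches that produce the $\bar{r}'_{p,q}$ correction, checking each of the nine residue cases.

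For the lower bound, the naive covering bound $\gamma_c \ge pq/5$ (each dominating vertex has closed neighborhood of size at most $5$) is too weak, so I would use the tree structure of a connected dominating set. The cleanest route is to combine the identity $n = \gamma_c(G) + \ell(G)$ (for $n > 2$) with an upper bound on the max leaf number $\ell(G_{p,q})$ obtained by partitioning the grid into $3 \times q$ horizontal strips and bounding the number of leaves a spanning tree can have in each strip. Equivalently, a discharging argument assigns each dominating vertex a charge equal to $1$ plus the number of outside neighbors it dominates, and shows that boundary effects and the forced overlap between adjacent dominating vertices reduce the effective coverage from $5$ to $3$ per vertex, producing the $pq/3$ main term.

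The main obstacle, I expect, will be the lower bound: bridging the gap from the easy $pq/5$ bound to the sharp $pq/3$ bound plus sharp correction terms requires careful handling of how dominating vertices must overlap (since the set is connected) and how boundary rows and columns constrain the pattern. A secondary obstacle is purely organizational: there are nine residue cases $(p \bmod 3, q \bmod 3)$, and the statement of $\bar{r}'_{p,q}$ and $c'_{p,q}$ already collapses several of them, so I would need to be careful that the explicit construction and the matching lower bound line up case by case, especially in the subtle cases $p \equiv q \equiv 1 \pmod 3$ and $p \equiv q \equiv 2 \pmod 3$ where the boundary correction is largest.
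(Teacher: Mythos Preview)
The paper does not prove this theorem at all: the \texttt{\textbackslash cite\{cg\}} in the theorem header indicates it is quoted verbatim from Goto and Kobayashi, \emph{Connected domination in grid graphs} (reference~[10] in the paper). The authors use it as a black box to justify that the explicit sets $S$ they construct in the subsequent theorem are minimum connected dominating sets; they never argue either bound for $\gamma_c(G_{p,q})$ themselves.

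So there is nothing in this paper to compare your proposal against. Your outline is a reasonable sketch of how the cited result might be established---the comb construction you describe is indeed the standard upper-bound pattern, and the paper's explicit sets $S = A \cup B \cup C$ (one full column, one nearly-full row, and every third row thereafter, with boundary patches) are exactly of that type---but if you want to verify your approach you would need to consult~\cite{cg} rather than this paper. Your honest assessment that the lower bound is the hard part, and that the naive $pq/5$ covering bound is far from sufficient, is correct; the cited paper presumably handles this with a substantially more delicate argument than what you have sketched, since your lower-bound plan (``discharging'' or ``max-leaf-number bound per strip'') is still only at the level of heuristic and does not yet account for the exact correction terms $\bar r'_{p,q}$ and $c'_{p,q}$.
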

\begin{theorem}
        For $p\times q$ ($p\geq 4, q\geq 4$) grid graph $G_{p,q}$ ,
    $$\gamma _c(G_{p,q}) \leq F _{cd}(G_{p,q}) \leq \gamma _c(G_{p,q})+1.$$
\end{theorem}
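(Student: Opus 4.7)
The lower bound $\gamma_c(G_{p,q}) \leq F_{cd}(G_{p,q})$ is immediate from part~(ii) of Proposition~\ref{bound}, since by definition every connected dom-forcing set is in particular a connected dominating set. All of the content therefore lies in the upper bound $F_{cd}(G_{p,q}) \leq \gamma_c(G_{p,q})+1$.

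My plan for the upper bound is to augment a minimum connected dominating set $D$ of $G_{p,q}$, taken in the explicit form used to attain the value in Theorem~\ref{g1}, by a single carefully chosen boundary vertex $v$, and to show that $S := D \cup \{v\}$ is a connected dom-forcing set. Connectedness and the dominating property of $S$ are inherited from $D$ as soon as $v$ is chosen adjacent to $D$, so the only real task is to verify that $S$ is a zero forcing set. The extremal connected dominating sets of Theorem~\ref{g1} have a predictable spine-and-bridge pattern: parallel rows (or columns) spaced roughly three apart, joined by short perpendicular bridges, with every off-spine vertex pinned between dominators in a uniform way. I would take $v$ to be a corner of $G_{p,q}$, so that once $v$ is blackened the dominator of $D$ nearest to that corner along the rim acquires a unique white neighbour and can force. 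The cascade then propagates first along the rim of the grid and afterwards sweeps, row by row, into the interior, each inward sweep being triggered by a boundary vertex that has just been forced.

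I would organise the verification as a case analysis on the four residues $(p \bmod 3, q \bmod 3)$ appearing in Theorem~\ref{g1}, in each case exhibiting the explicit $D$, marking the seed $v$, and tracing the forcing chain. The main obstacle I anticipate is the interior step: in the middle of a spine, a dominator of $D$ can simultaneously have white neighbours on two opposite sides, so the force is blocked there until one side has been completely blackened. Consequently the order of forces must be arranged so that the rim sweep feeds exactly one open side of each interior spine at a time; in some residue classes this will force $v$ to be placed not at a corner but at the endpoint of a bridge, a choice that has to be read off from the explicit structure of $D$ in that case. Once the correct $v$ is identified in each of the four cases, the rim-then-interior sweep completes the colouring, yielding $F_{cd}(G_{p,q}) \leq |D|+1 = \gamma_c(G_{p,q})+1$.
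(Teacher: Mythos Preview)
Your proposal is correct and follows essentially the same route as the paper: take the explicit minimum connected dominating set $D$ realising Theorem~\ref{g1}, adjoin a single boundary vertex, and verify by a case analysis on $(p\bmod 3,\,q\bmod 3)$ that the resulting set forces the grid. The paper's proof differs only in that for the cases where neither $p$ nor $q$ is divisible by $3$ it chooses $D$ so that $D$ is \emph{already} a zero forcing set with no extra vertex needed, which is how the two corollaries $F_{cd}=\gamma_c$ are obtained; your plan of always adding one vertex suffices for the theorem as stated but would not recover these sharper equalities. (Also, there are six residue classes up to the $p\leftrightarrow q$ symmetry, not four; the paper treats them individually.)
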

\begin{proof}
Consider the following cases.\\\\
\textbf{Case 1:} For  any $q$ and p (mod 3) $=0 $. We consider the vertex set $S = A \cup B \cup C$ in which $A = \{(1, 2), (2, 2), \cdots , (p, 2)\}, B = \{(2, 3), (2, 4), \cdots , (2, q)\}$, and $C =\{(x, 3), \cdots , (x, q) | x = 5,8, \cdots ,p-1\}$. Theorem \ref{g1} says that $S$ is a minimum connected dominating set with cardinality $\frac{p(q+1)}{3}$.\\\\
\textbf{Case 2:}  For any $p$ and q (mod 3)$=0$, we consider the vertex set $S = A \cup B \cup C'$ in which $C' = \{(3, y), \cdots , (p, y) | y = 5,8,\cdots, q - 1\}$. Then $S $ is a minimum connected dominating set with cardinality $\frac{(p+1)q}{3}$ (By Theorem \ref{g1}). In both cases  $A\cup \{(1, 1)\}$ forces the entire graph. Hence
$$\gamma _c(G_{p,q}) \leq F _{cd}(G_{p,q}) \leq \gamma _c(G_{p,q})+1.$$
\textbf{Case 3:} For p (mod 3) $=1$ and q (mod 3) $=1$. We consider the vertex set $S = A \cup B_1 \cup C_1 \cup E_1$ for $p=4$ and $S = A \cup B \cup C_1 \cup D_1 \cup E_1$ for $p >4$,  in which  $B_1 = \{(2, 3), (2, 4), \cdots , (2, q-2)\}$, $C_1 = \{(3, y), \cdots , (p, y) | y = 5,8, \cdots, q -2\}$, $D_1 = \{(x, q - 1), (x, q) | x = 5,8, \cdots ,p - 5\}$ and $E_1 = \{(p-1, q - 1),(p-1, q),(p-2, q)\}$. Then $S $ is a minimum connected dominating set with cardinality $\frac{pq+p+q-3}{3}$ (By Theorem \ref{g1}). This set $S$ forces the entire graph, because of the following.\\

Color the vertices in $S$  black. The only white neighbour of $(p-1, q)$ is $(p, q)$. Therefore,  $(p-1, q) \rightarrow (p, q)\rightarrow (p, q-1)$. Again the vertices in $C_1$ and $D_1$ are black, we can force the top three rows by using $C_1$ and $D_1$. Hence we can force the entire graph black. Therefore,
$$\gamma _c(G_{p,q}) = F _{cd}(G_{p,q}).$$\\
\textbf{Case 4:} For p (mod 3) $=1$ and q (mod 3) $=2$. We consider the vertex set $S = A \cup B \cup C_2 \cup D_2 \cup E_2$, in which $C_2 = \{(3, y), \cdots , (p, y) | y = 5,8, \cdots, q - 3\}$, $D_2 = \{(x, q - 2), (x, q -1), (x, q) | x = 5,8, \cdots ,p - 2\}$ and $E_2 = \{(p, q - 2), (p, q - 1)\}$. Then $S $ is a minimum connected dominating set with cardinality $\frac{pq+p+q-2}{3}$ (By Theorem \ref{g1}). This set forces the entire graph, since the following.\\

Color the vertices in $S$  black. The only white neighbour of $(p, q-2)$ is $(p-1, q-2)$, therefore  $(p, q-2) \rightarrow (p-1, q-2)\rightarrow (p-1, q-1)\rightarrow (p-1, q)\rightarrow (p, q) $. Again vertices in $C_2$ and $D_2$ are black, we can forces the top four rows by $C_2$ and $D_2$. Hence we can force the entire graph black. Therefore,
$$\gamma _c(G_{p,q}) = F _{cd}(G_{p,q}) .$$\\
\textbf{Case 5:} For p (mod 3) $=2$ and q (mod 3) $=1$: We consider the vertex set $S = A \cup B \cup C_3 \cup D_3 \cup E_3$, in which $C_3 = \{(3, y), \cdots , (p, y) | y = 5,8, \cdots, q - 2\}$, $D_3 = \{(x, q - 2), (x, q -1), (x, q) | x = 5,8, \cdots ,p - 3\}$ and $E_3 = \{(p, q - 1), (p, q)\}$. Then $S $ is a minimum connected dominating set with cardinality $\frac{pq+p+q-2}{3}$ (By Theorem \ref{g1}). This set forces the entire graph, since the following.

Color the vertices in $S$  black. The only white neighbour of $(p, q-1)$ is $(p-1, q-1)$, and $(p, q)$ is $(p-1, q)$. Therefore  $(p, q-1) \rightarrow (p-1, q-1)\rightarrow (p-2, q-1) $;$(p, q) \rightarrow (p-1, q)\rightarrow (p-2, q)$. Again vertices in $B_3$,$C_3$ and $D_3$ are black, we can forces the top three rows by $B_3$,$C_3$ and $D_3$. Hence we can force the entire graph black. Therefore,
$$\gamma _c(G_{p,q}) = F _{cd}(G_{p,q}) .$$\\\\
\textbf{Case 6:} For p (mod 3) $=2$ and q (mod 3) $=2$. We consider the vertex set $S = A \cup B \cup C_4 \cup D_4 \cup E_4$, in which $C_4 = \{(3, y), \cdots, (p, y) | y = 5,8, \cdots, q - 3\}$, $D_4 = \{(x, q - 2), (x, q -1), (x, q) | x = 5,8, \cdots ,p - 3\}$ and $E_4 = \{(p, q-2),(p, q - 1), (p, q)\}$.Then $S $ is a minimum connected dominating set with cardinality $\frac{pq+p+q-2}{3}$ (By Theorem \ref{g1}). This set forces the entire graph, since the following.\\

Color the vertices in $S$  black. The only white neighbour of $(p, q-2)$ is $(p-1, q-2)$, $(p, q-1)$ is $(p-1, q-1)$ and $(p, q)$ is $(p-1, q)$. Therefore  $(p, q-2) \rightarrow (p-1, q-2)\rightarrow (p-2, q-2) $; $(p, q-1) \rightarrow (p-1, q-1)\rightarrow (p-2, q-1) $;$(p, q) \rightarrow (p-1, q)\rightarrow (p-2, q)$. Again vertices in $B_4$,$C_4$ and $D_4$ are black, we can forces the top four rows by $B_4$,$C_4$ and $D_4$. Hence we can force the entire graph black. Therefore,
$$\gamma _c(G_{p,q}) = F _{cd}(G_{p,q}) .$$

In all cases we have $$\gamma _c(G_{p,q}) \leq F _{cd}(G_{p,q}) \leq \gamma _c(G_{p,q})+1.$$
\end{proof}
\begin{corollary}
    For $p$ and $q$ $\equiv 1(\text{ mod 3})$, connected dom-forcing number of a $p\times q$ grid graph $G_{p,q}$ is $$F_{cd}(G_{p,q})=\frac{pq+p+q-3}{3}.$$
\end{corollary}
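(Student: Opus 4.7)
The plan is to read off the result directly from Case~3 of the preceding theorem, which already covers exactly the congruence class $p \equiv q \equiv 1 \pmod 3$, and then to simplify the closed form of $\gamma_c(G_{p,q})$ supplied by Theorem~\ref{g1}. First I would invoke Case~3, which constructs an explicit minimum connected dominating set $S$ of cardinality $\frac{pq+p+q-3}{3}$ and then exhibits a forcing sequence (starting from $(p-1,q)\to(p,q)\to(p,q-1)$ and propagating upward through the columns of $C_1$ and $D_1$) that blackens every vertex, thereby establishing the equality $\gamma_c(G_{p,q}) = F_{cd}(G_{p,q})$ in this residue class.

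It then remains to check that Theorem~\ref{g1} indeed specialises to $\frac{pq+p+q-3}{3}$ when $p,q\equiv 1\pmod 3$. In this regime one has $a'_{p,q} = (p\bmod 3)(q\bmod 3) = 1$, $\bar r'_{p,q} = 1$, and since both $p\bmod 3$ and $q\bmod 3$ are non-zero, $c'_{p,q} = \lfloor p/3\rfloor + \lfloor q/3\rfloor - 1 = \tfrac{p-1}{3} + \tfrac{q-1}{3} - 1$. Substituting into the formula of Theorem~\ref{g1} gives
\[
\gamma_c(G_{p,q}) = \frac{pq-1}{3} + 1 + \frac{p-1}{3} + \frac{q-1}{3} - 1 = \frac{pq+p+q-3}{3}.
\]

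Combining the two ingredients yields $F_{cd}(G_{p,q}) = \gamma_c(G_{p,q}) = \frac{pq+p+q-3}{3}$, as required. There is no real obstacle here beyond bookkeeping: the corollary is genuinely a restatement of Case~3 in closed form, so the only thing that could go wrong is a miscalculation in the $\lfloor\cdot\rfloor$ terms of $c'_{p,q}$ — which is why I would write out the arithmetic in full, using $\lfloor p/3\rfloor = (p-1)/3$ when $p\equiv 1\pmod 3$, to make the cancellation transparent.
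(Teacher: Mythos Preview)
Your proposal is correct and matches the paper's approach: the corollary is stated without proof in the paper, being an immediate consequence of Case~3 of the preceding theorem, which already asserts both the equality $\gamma_c(G_{p,q}) = F_{cd}(G_{p,q})$ and the cardinality $\frac{pq+p+q-3}{3}$. Your additional verification via Theorem~\ref{g1} is a helpful sanity check but not strictly needed, since Case~3 quotes that cardinality directly.
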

\begin{corollary}
    If $p,q$ is not a multiple of three and $p$ or $q$ $\equiv 2(\text{ mod 3})$, then connected dom-forcing number of a $p\times q$ grid graph $G_{p,q}$ is $$F_{cd}(G_{p,q})=\frac{pq+p+q-2}{3}.$$
\end{corollary}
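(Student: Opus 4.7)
The plan is to derive this corollary directly from the preceding theorem combined with Theorem~\ref{g1}. The preceding theorem split the analysis into six cases based on the residues of $p$ and $q$ modulo $3$, and in Cases 4, 5, and 6---corresponding to $(p \bmod 3, q \bmod 3) \in \{(1,2),(2,1),(2,2)\}$---it was shown explicitly that the constructed minimum connected dominating set $S$ can be completed to (or already is) a connected dom-forcing set of the same size, yielding the equality $F_{cd}(G_{p,q}) = \gamma_c(G_{p,q})$. The hypothesis of this corollary---that neither $p$ nor $q$ is divisible by $3$ and at least one of them is $\equiv 2 \pmod{3}$---is precisely the union of these three residue classes, so the structural equality $F_{cd} = \gamma_c$ applies throughout.

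It then suffices to substitute into the closed form of Theorem~\ref{g1} and verify that the resulting expression equals $\frac{pq+p+q-2}{3}$ in each of the three residue classes. In Cases 4 and 5 one has $a'_{p,q} = 2$, $\bar{r}'_{p,q} = 2$, and $c'_{p,q} = \lfloor p/3 \rfloor + \lfloor q/3 \rfloor - 1$, while in Case 6 one has $a'_{p,q} = 4$, $\bar{r}'_{p,q} = 3$, and the same $c'_{p,q}$. After replacing each floor by the appropriate closed form $(p-r)/3$ with $r \in \{1,2\}$ (and similarly for $q$), the expression collapses to $\frac{pq+p+q-2}{3}$ in all three cases. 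By the symmetry of the formula in $p$ and $q$, Cases 4 and 5 reduce to the same calculation, so in effect only two short arithmetic verifications are needed.

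The main obstacle, if any, is merely bookkeeping with the modular residues: one must carefully pair each case with the correct values of $a'_{p,q}$, $\bar{r}'_{p,q}$, and $c'_{p,q}$ from the piecewise formula in Theorem~\ref{g1}, and confirm that every branch collapses to the single stated expression. The combinatorial content---the fact that the minimum connected dominating set is (or extends to) a connected dom-forcing set of the same size in exactly these residue classes---has already been established in the preceding theorem, so the corollary is an immediate arithmetic consequence.
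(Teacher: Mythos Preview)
Your proposal is correct and follows the paper's approach: the paper gives no separate proof of this corollary, treating it as immediate from Cases 4, 5, and 6 of the preceding theorem, where it was already shown that $F_{cd}(G_{p,q})=\gamma_c(G_{p,q})$ and the cardinality $\frac{pq+p+q-2}{3}$ was stated explicitly in each case. Your additional verification of the arithmetic via Theorem~\ref{g1} is sound but not strictly necessary, since those values were already computed in the theorem's proof.
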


In the case of $G_{11,11}$, the above theorem is illustrated in Figure \ref{cdg}.
\begin{figure}[h]
\definecolor{ffffff}{rgb}{1,1,1}
\begin{tikzpicture}[scale=0.75]

  \foreach \x in {0,...,10} {\foreach \y in {0,...,9} {\draw[line width=1.5pt,color=blue,step=.5cm,] (\x,\y) -- (\x,\y+1);}}
  
  \foreach \x in {0,...,9} {
    \foreach \y in {0,...,10} {
      \draw[line width=1.5pt,color=blue,step=.5cm,] (\x,\y) -- (\x+1,\y);
      }
    }
    \foreach \x in {0,...,10} {
    \foreach \y in {0,...,10} {
      \node[draw, circle, fill=ffffff,minimum size=4pt] (\x,\y) at (\x,\y) { };
      }
    }
    \foreach \y in {0,...,10} {
      \node[draw,circle  ,fill=black,   text=ffffff, font=\huge, inner sep=0pt,minimum size=4mm] (\y,1) at (\y,1) {\scalebox{.4}{$A$}};}
      \foreach \y in {2,...,10} {
      \node[draw,circle  ,fill=black,   text=ffffff, font=\huge, inner sep=0pt,minimum size=4mm] (1,\y) at (1,\y) {\scalebox{.4}{$B$}};}
      \foreach \x in{2,...,10}{\foreach \y in {4,7} {
      \node[draw,circle  ,fill=black,   text=ffffff, font=\huge, inner sep=0pt,minimum size=4mm] (\x,\y) at (\x,\y) {\scalebox{.4}{$C$}};}}
      \foreach \x in{8,9,10}{\foreach \y in {4,7} {
      \node[draw,circle  ,fill=black,   text=ffffff, font=\huge, inner sep=0pt,minimum size=4mm] (\y,\x) at (\y,\x) {\scalebox{.4}{$D$}};}}
      \foreach \x in{8,9,10}{
      \node[draw,circle  ,fill=black,   text=ffffff, font=\huge, inner sep=0pt,minimum size=4mm] (10,\x) at (10,\x) {\scalebox{.4}{$E$}};}
\end{tikzpicture}
\caption {Connected dom-forcing set as well as connected dominating set  for $G_{11,11}$ } 
\label{cdg}
\end {figure}
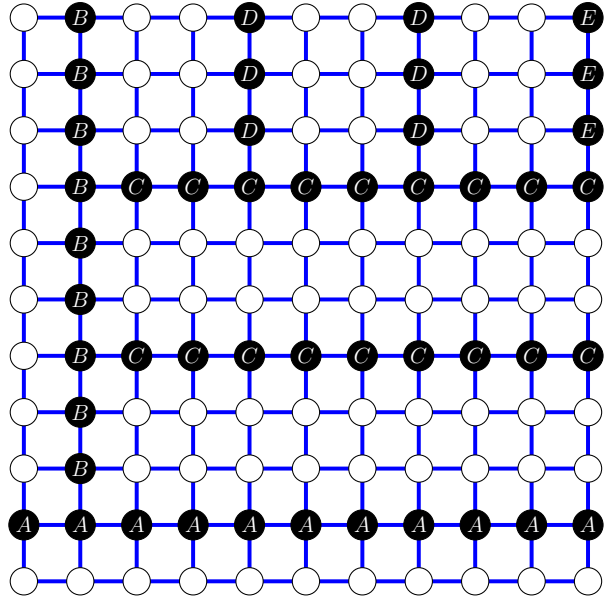
\begin{theorem}
    The connected dom-forcing number of Cartesian product of cycle $C_n$ and path $P_2$ is $n$. ie $$F_{cd}(C_n\Box P_2)=n.$$
\end{theorem}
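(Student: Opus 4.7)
The strategy is to establish matching bounds. For the upper bound, I would exhibit the explicit set $S = \{(i,1) : 1 \le i \le n\}$, which induces a copy of $C_n$ and is therefore connected, dominates because each $(i,2)$ is adjacent to $(i,1)$, and zero-forces in a single parallel step since every $(i,1)$ has the two black neighbors $(i\pm 1, 1)$ and the unique white neighbor $(i,2)$. This shows $F_{cd}(C_n \Box P_2) \le n$.

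For the lower bound, let $S$ be any connected dom-forcing set and put $A = \{i : (i,1) \in S\}$, $B = \{i : (i,2) \in S\}$, $C = A \cup B$; then $|S| = |A| + |B| = |A \cup B| + |A \cap B|$. The column projection $\pi(i,j) = i$ sends horizontal edges of $S$ to edges of $C_n$ and vertical edges to single vertices, so a path in $S$ projects to a walk in $C_n$; hence $C$ is connected in $C_n$, so it is either $[n]$ or a contiguous arc. A gap in $C$ cannot have length $\ge 3$, since an interior gap-column would have all three of its neighbors outside $S$ and fail to be dominated. Thus there is at most one gap, of size at most $2$.

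The cases run as follows. If $C = [n]$ then $|S| \ge |A \cup B| = n$. If the gap has size $2$, say $\{j, j+1\}$, then $(j,1)$ can only be dominated by $(j-1,1)$ and $(j+1,1)$ only by $(j+2,1)$, so $j-1, j+2 \in A$, and symmetrically $j-1, j+2 \in B$; for $n \ge 4$ these are two distinct columns, giving $|A \cap B| \ge 2$ and $|S| \ge (n-2) + 2 = n$. If the gap has size $1$ then $S$ must meet both rows (otherwise domination of the opposite row forces $S$ to contain that whole row, contradicting the existence of a gap), and connectedness of $S$ then forces at least one vertical edge, so $|A \cap B| \ge 1$ and $|S| \ge (n-1) + 1 = n$.

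The main technical subtlety is the borderline case $n = 3$ with a size-$2$ gap, where the two flanking columns coincide; the domination argument alone only yields $|A \cap B| \ge 1$ and in principle permits $|S| = 2$ with $S = \{(i,1),(i,2)\}$ for some $i$. I would rule this out using the zero-forcing hypothesis, which has not yet been invoked: in $C_3 \Box P_2$, both vertices of such an $S$ have two white neighbors, so no force can ever be initiated, and hence $S$ is not a zero forcing set. Thus $|S| \ge 3 = n$ in this remaining case as well, completing the proof.
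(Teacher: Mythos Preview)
Your proof is correct. The upper bound via one copy of $C_n$ is exactly what the paper does.

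For the lower bound the two arguments diverge. The paper simply asserts that every connected dominating set of $C_n\Box P_2$ has at least $n$ vertices and then invokes $\gamma_c\le F_{cd}$; no justification is given. Your projection-and-gap analysis actually proves this inequality for $n\ge 4$, so in that range you are supplying the details the paper omits. More interestingly, for $n=3$ the paper's asserted bound is false: $\{(i,1),(i,2)\}$ is a connected dominating set of the triangular prism of size $2<n$. Your argument detects this (the size-$2$ gap with $j-1=j+2$) and closes it by finally invoking the zero-forcing hypothesis, which the paper's proof never uses for the lower bound. So your route is longer but strictly more complete: it yields a self-contained proof valid for all $n\ge 3$, whereas the paper's one-line lower bound needs an unstated restriction to $n\ge 4$ or an additional argument like yours to handle $n=3$.

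One small wording point: in your size-$1$-gap case, ``forces $S$ to contain that whole row'' should read ``forces $S$ to contain the whole of its own row''; the logic you intend (if $B=\emptyset$ then each $(i,2)$ can only be dominated by $(i,1)$, forcing $A=[n]$) is correct, but the phrasing is momentarily ambiguous.
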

\begin{proof}
    Let $v_1,\cdots, v_n$ be the vertices of one copy of $C_n$ in $C_n\Box P_2$ and $v'_1,\cdots, v'_n$ be the vertices of other copy of $C_n$ in $C_n\Box P_2$, each $v_i$ is adjacent to $v_i'$. We can see that every connected dominating set contain at least $n$ vertices, and let these vertices be $\{v_1,\cdots, v_n\}$. Also these vertices forces the entire graph black. Hence $$F_{cd}(C_n\Box P_2)=n.$$
\end{proof}
\section{Connected dom-forcing number of splitting graph of a graph $G$ }
The splitting graph of a graph $G$ is the graph $S(G)$ obtained by taking a vertex $v'$ corresponding to each vertex $v \in G$ and join $v'$ to all vertices of $G$ adjacent to $v$ \cite{split}. 
\begin{theorem} \cite{cdsplit} \label{cds}
    Let $G$ be a connected graph of order $n$. Then 
    $$\gamma _c (S(G))=\left \{ \begin{array}{ccc}
       2  &if & \gamma_c(G)=1 \\
      \gamma_c(G) &if & \gamma_c(G)\geq 2\\
    \end{array} \right .$$
\end{theorem}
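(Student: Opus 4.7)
The plan is to handle the two cases $\gamma_c(G)=1$ and $\gamma_c(G)\geq 2$ separately, exploiting two structural features of $S(G)$: the subgraph on $V(G)$ is a copy of $G$, while the new vertex set $V' = \{v' : v \in V(G)\}$ is independent with $N_{S(G)}(v') = N_G(v)$. These features let me compare connected dominating sets in $S(G)$ with those in $G$ via the natural projection sending each $v'$ to $v$.

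For $\gamma_c(G)\geq 2$, I would prove equality via two inequalities. For $\gamma_c(S(G)) \leq \gamma_c(G)$, take a minimum connected dominating set $D$ of $G$. Since the subgraph induced by $D$ in $S(G)$ coincides with the subgraph induced by $D$ in $G$, it is connected, and $D$ dominates $V(G)$ by hypothesis. To show $D$ dominates each $v' \in V'$, it suffices to find $u \in D \cap N_G(v)$; this is immediate if $v \notin D$ (since $D$ dominates $v$ in $G$), and if $v \in D$ it follows from $|D|\geq 2$ together with connectivity of the induced subgraph, which force $v$ to have a neighbor in $D$. This last subcase is precisely where the hypothesis $\gamma_c(G)\geq 2$ is used.

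For $\gamma_c(S(G))\geq \gamma_c(G)$, take any connected dominating set $D$ of $S(G)$ and let $D^* = (D\cap V(G)) \cup \{v : v'\in D\} \subseteq V(G)$, so that $|D^*|\leq |D|$. Any path in the subgraph induced by $D$ in $S(G)$ uses only edges inside $V(G)$ or edges of the form $u v'$ with $u\sim_G v$ (since $V'$ is independent); such edges project to edges of $G$ among elements of $D^*$, so the subgraph induced by $D^*$ in $G$ is connected. For domination, a vertex $w\in V(G)$ is dominated in $S(G)$ either by itself (then $w \in D^*$), by some $u \in D \cap V(G)$ with $u\sim_G w$ (then $u \in D^*$), or by some $u' \in D$ with $u \sim_G w$ (then $u \in D^*$); in every case a vertex of $D^*$ dominates $w$ in $G$. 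Hence $D^*$ is a connected dominating set of $G$, giving $\gamma_c(G)\leq |D^*|\leq |D|$.

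For $\gamma_c(G)=1$, let $u$ be a universal vertex of $G$. Since $u\not\sim_{S(G)} u'$ and $V'$ is independent, no single vertex dominates all of $S(G)$, so $\gamma_c(S(G))\geq 2$. Conversely, $\{u,v\}$ for any $v\neq u$ induces an edge of $G$ (so is connected), dominates $V(G)$ via $u$, dominates $u'$ via $v$ (using $v\in N_G(u)$), and dominates each $w'$ with $w\neq u$ via $u$, yielding $\gamma_c(S(G))=2$. The main technical obstacle is in the lower-bound argument for the generic case: the projection $D\mapsto D^*$ may collapse a pair $\{v,v'\}\subseteq D$ to the single vertex $v$, so connectivity and domination of $D^*$ in $G$ must be verified directly rather than merely inherited from $S(G)$.
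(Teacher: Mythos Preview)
Your argument is correct. Note, however, that the present paper does not prove this theorem at all: it is quoted from \cite{cdsplit} and used as a black box, so there is no proof here to compare against. Your projection map $v'\mapsto v$ is exactly the natural approach, and the two structural facts you isolate (that $V'$ is independent and that $N_{S(G)}(v')=N_G(v)$) are precisely what make both directions work. One very minor point: in the case $\gamma_c(G)=1$ your construction of $\{u,v\}$ with $v\neq u$ tacitly assumes $|V(G)|\ge 2$; when $|V(G)|=1$ the splitting graph is two isolated vertices and the statement is vacuous, so this is harmless.
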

 
\begin{theorem} \cite{zpr} \label{zsp}
Let $S(P_n)$ be the splitting graph of the path $P_n$. Then $Z[S(P_n)]=2 Z(P_n)=2$.
\end{theorem}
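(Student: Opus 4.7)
The plan is to prove matching lower and upper bounds for $Z[S(P_n)]$. Because colouring a single endpoint forces every path, $Z(P_n)=1$, and so the identity $2Z(P_n)=2$ is automatic; the substantive content is therefore $Z[S(P_n)]=2$, which I will handle under the tacit assumption $n\ge 3$.

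For the lower bound, I would appeal to the classical characterisation that $Z(G)=1$ if and only if $G$ is a path. In $S(P_n)$ the vertex $v_2$ is adjacent to $v_1$, $v_3$, $v'_1$ and $v'_3$, so it has degree four; hence $S(P_n)$ is not a path and $Z[S(P_n)]\ge 2$.

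For the upper bound I would exhibit the two-element set $B=\{v_1,v'_1\}$ and verify that it forces every vertex via a zig-zag cascade along the two ``rails'' of $S(P_n)$. The opening moves are $v'_1\to v_2$ (the pendant $v'_1$ has only $v_2$ as a neighbour) and then $v_1\to v'_2$ (after $v_2$ is black, the only remaining white neighbour of $v_1$ is $v'_2$). Inductively, once $\{v_1,v'_1,v_2,v'_2,\dots,v_k,v'_k\}$ is entirely black for some $2\le k\le n-1$, the vertex $v'_k$ has neighbourhood $\{v_{k-1},v_{k+1}\}$ whose sole white element is $v_{k+1}$, so $v'_k\to v_{k+1}$; immediately afterwards $v_k$ has neighbourhood $\{v_{k-1},v_{k+1},v'_{k-1},v'_{k+1}\}$ whose sole white element is $v'_{k+1}$, so $v_k\to v'_{k+1}$. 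Iterating until $k=n-1$ blackens every vertex and gives $Z[S(P_n)]\le 2$.

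The only potential obstacle is bookkeeping at the ends of the induction: handling the degenerate opening, where the indices $v_0,v'_0$ that would appear in a naive inductive step do not exist, and checking at $k=n-1$ that $v_n$ and $v'_n$ really are forced (which is where the fact that $v'_n$ is itself pendant, with $v_{n-1}$ as its unique neighbour, enters). No deeper combinatorial difficulty is involved.
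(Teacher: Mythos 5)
This statement is quoted from \cite{zpr} and the paper gives no proof of its own, so there is nothing internal to compare against; judged on its own, your argument is correct and self-contained. The lower bound via the standard characterisation ``$Z(G)=1$ iff $G$ is a path'' is valid since $v_2$ has degree four once $n\ge 3$, and your forcing cascade from $B=\{v_1,v'_1\}$ checks out: $N(v'_k)=\{v_{k-1},v_{k+1}\}$ and $N(v_k)=\{v_{k-1},v_{k+1},v'_{k-1},v'_{k+1}\}$, so the alternating moves $v'_k\to v_{k+1}$, $v_k\to v'_{k+1}$ go through exactly as you describe, with the opening moves covering the missing indices $v_0,v'_0$. Two minor remarks: the tacit restriction $n\ge 3$ is genuinely necessary, since $S(P_2)\cong P_4$ has zero forcing number $1$, so the cited statement should be read with that proviso; and at the last step the pendancy of $v'_n$ is not actually what is used --- $v'_n$ is forced by $v_{n-1}$, whose only remaining white neighbour it is, exactly as in the generic step --- so no special terminal case is needed.
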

\begin{theorem}
    Let $S(P_n)$ be the splitting graph of the path $P_n$. Then  $$n-1 \leq F_{cd}[S(P_n)]\leq n.$$
\end{theorem}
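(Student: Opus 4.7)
My plan is to prove the two bounds separately, with the upper bound via an explicit construction and the lower bound via a structural characterization of minimum connected dominating sets of $S(P_n)$.

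For the upper bound $F_{cd}(S(P_n)) \leq n$, I would take the explicit set $S = \{v_1, v_2, \ldots, v_{n-1}\} \cup \{v_n'\}$, which has size $n$. Connectivity is immediate since $v_1, \ldots, v_{n-1}$ induce the backbone path and $v_n'$ attaches through its unique neighbor $v_{n-1}$. Domination is also immediate: $v_n$ is dominated by $v_{n-1}$, and each $v_i'$ with $i \leq n-1$ has at least one neighbor among $\{v_{i-1}, v_{i+1}\}$ in $S$. For zero forcing I would trace the cascade: $v_1$ has the unique white neighbor $v_2'$, so $v_1 \to v_2'$; then $v_3 \to v_4'$, $v_5 \to v_6'$, and so on, each odd-indexed backbone vertex forcing the next even-indexed primed vertex. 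When the cascade reaches the right end (with a small case split on the parity of $n$), either $v_{n-1} \to v_n$ directly or a primed vertex forces $v_n$; then a symmetric right cascade $v_{n-2} \to v_{n-3}'$, $v_{n-4} \to v_{n-5}'$, $\ldots$, $v_2 \to v_1'$ colors the remaining odd-indexed primed vertices.

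For the lower bound, Proposition \ref{bound} combined with Theorem \ref{cds} already gives $F_{cd}(S(P_n)) \geq \gamma_c(S(P_n)) = \gamma_c(P_n) = n-2$, so I only need to rule out equality; equivalently, I must show that no minimum connected dominating set of $S(P_n)$ is a zero forcing set. I would first characterize all minimum CDS of $S(P_n)$. Any such $S$ must contain $v_2$ and $v_{n-1}$, because the pendants $v_1'$ and $v_n'$ have unique neighbors $v_2$ and $v_{n-1}$, and including a pendant itself would leave it isolated in the induced subgraph. The distance from $v_2$ to $v_{n-1}$ in $S(P_n)$ is exactly $n-3$, so a connected set of $n-2$ vertices containing both endpoints must induce a shortest $v_2$-$v_{n-1}$ path. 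Checking adjacencies shows these paths are precisely the sequences $w_2, w_3, \ldots, w_{n-1}$ with $w_i \in \{v_i, v_i'\}$, endpoints $w_2 = v_2$ and $w_{n-1} = v_{n-1}$, and no two consecutive primed entries (since $v_i'$ is not adjacent to $v_{i+1}'$).

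With this description in hand, I would verify that no vertex in such an $S$ has exactly one white neighbor at the initial coloring. For an interior backbone vertex $v_i \in S$ with $3 \leq i \leq n-2$, the four neighbors $\{v_{i-1}, v_{i+1}, v_{i-1}', v_{i+1}'\}$ split into two twin pairs, and $S$ contains exactly one member of each pair, leaving exactly two white neighbors. The endpoints $v_2$ and $v_{n-1}$ each have three white neighbors (the two local pendants plus the unchosen member of the adjacent twin pair). For a primed vertex $v_i' \in S$, the no-consecutive-primes rule forces the two backbone neighbors $v_{i-1}, v_{i+1}$ to lie in $S$, so $v_i'$ has zero white neighbors. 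Since no vertex has exactly one white neighbor, no initial force is possible, so $S$ fails to be a zero forcing set. The main obstacle is pinning down the characterization of minimum connected dominating sets, namely ruling out "shortcut" $v_2$-$v_{n-1}$ paths through primed vertices and identifying the exact combinatorial freedom; once this is done, the neighborhood-count argument that kills zero forcing is a clean case analysis.
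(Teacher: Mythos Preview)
Your proposal is correct and, for the lower bound, actually more rigorous than the paper's argument. For the upper bound the paper does essentially the same thing with a slightly different witness: it takes $A=\{v_2,\ldots,v_{n-1}\}$ together with $B=\{v_1,u_1\}$ (in the paper's notation $u_i$ is your $v_i'$), invoking Theorem~\ref{zsp} to certify that $B$ is a zero forcing set and hence $A\cup B$ is a connected dom-forcing set of size $n$. Your set $\{v_1,\ldots,v_{n-1},v_n'\}$ works just as well, and your explicit forcing trace replaces the appeal to Theorem~\ref{zsp}.

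The real divergence is in the lower bound. The paper asserts that $A=\{v_2,\ldots,v_{n-1}\}$ is the \emph{unique} minimum connected dominating set of $S(P_n)$, observes that $A$ is not a zero forcing set, and concludes $F_{cd}\geq n-1$. That uniqueness claim is false: for instance, when $n\geq 7$ the set $\{v_2,v_3,v_4',v_5,\ldots,v_{n-1}\}$ is also a minimum connected dominating set. Your approach sidesteps this by characterizing \emph{all} connected $(n-2)$-sets containing $v_2$ and $v_{n-1}$ as shortest $v_2$--$v_{n-1}$ paths of the form $w_2,\ldots,w_{n-1}$ with $w_i\in\{v_i,v_i'\}$ and no two consecutive primed entries, and then uniformly showing that no vertex on such a path has exactly one white neighbor. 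This is the genuinely correct way to close the gap, and it buys you a complete argument where the paper's is incomplete. One small wording fix: your reason for excluding $v_1'$ from $S$ is not that it would be isolated, but that its presence forces $v_2\in S$ as well, and then a counting argument (a connected set containing $v_1',v_2,v_{n-1}$ needs at least $n-1$ vertices) rules it out.
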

\begin{proof}
     Let $v_1,v_2, \cdots, v_n $ be the vertices of the path graph $P_n$ in $ S(P_n)$, and let $u_1, u_2, \cdots ,u_n$ be the vertices corresponding to $v_1,v_2, \cdots, v_n$ which are added to obtain $S(P_n)$. Then $A=\{v_2, \cdots, v_{n-1} \}$    be a minimum connected dominating set by theorem \ref{cds} and is unique. We can see that this set is not a zero forcing set. Hence $F_{cd}[S(P_n)]\geq n-1$. The set $B=\{v_1, u_1\}$ zero forces $S(P_n)$ by theorem \ref{zsp}. Therefore $A \cup B$ forms a connected dom-forcing set of $S(P_n)$. Hence $F_{cd}[S(P_n)]\leq n$. From these we have $n-1 \leq F_{cd}[S(P_n)]\leq n$.
\end{proof}
\begin{theorem} \cite{df1}
    Let $S(P_n)$ be the splitting graph of the path $P_n$. Then for $2\leq n \leq 4$, $F_d[(S(P_n)]=n$.
\end{theorem}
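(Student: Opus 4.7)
The plan is to verify the claim by separate case analysis for each $n\in\{2,3,4\}$, establishing matching upper and lower bounds on $F_d[S(P_n)]$. Throughout, let $v_1,\dots,v_n$ denote the vertices of the underlying path and $u_1,\dots,u_n$ the twin vertices added to form $S(P_n)$. The key structural observation is that $u_1$ and $u_n$ have degree one in $S(P_n)$ (adjacent only to $v_2$ and $v_{n-1}$ respectively), which both constrains every dominating set and makes those pendants awkward to force.

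For $n=2$, the splitting graph is the path $u_1 - v_2 - v_1 - u_2$, so $S(P_2)\cong P_4$. The set $\{v_1,v_2\}$ dominates and is a zero forcing set (from $v_1$ force $u_2$; from $v_2$ force $u_1$), giving $F_d\leq 2$. Since $S(P_2)$ is neither $P_1$ nor $P_2$, Proposition \ref{df} yields $F_d\geq 2$, and equality follows.

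For $n=3$ and $n=4$ the lower bound is the main content. The approach is to enumerate dominating sets of size less than $n$ and check that none is a zero forcing set. For $n=3$, the pendant constraints on $u_1,u_3$ together with the need to dominate $u_2$ restrict the size-$2$ dominating sets to $\{v_2,u_2\}$, $\{v_1,v_2\}$, and $\{v_2,v_3\}$; in each case $v_2$ retains at least two white neighbors throughout the forcing attempt, so forcing halts with $u_1$ or $u_3$ still white. The set $\{v_1,v_2,u_3\}$ is then a dom-forcing set: $v_1\to u_2\to v_3$ followed by $v_2\to u_1$ colors the whole graph. For $n=4$, a short domination count using the pendants $u_1,u_4$ and the degree-two vertices $u_2,u_3$ shows every dominating set of size at most three must contain both $v_2$ and $v_3$; thus the only size-$2$ dominating set is $\{v_2,v_3\}$ and the size-$3$ candidates all have the form $\{v_2,v_3,x\}$. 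Using the reflection symmetry $v_i\leftrightarrow v_{5-i}$, $u_i\leftrightarrow u_{5-i}$, only the three representative choices $x\in\{v_1,u_1,u_2\}$ need checking, and a direct chase confirms none is zero forcing. Finally $\{v_1,v_2,v_3,v_4\}$ is a dom-forcing set: $v_1\to u_2$, $v_4\to u_3$, then $v_2\to u_1$ and $v_3\to u_4$.

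The main obstacle is the lower-bound enumeration for $n=4$. Its cleanest form rests on two uniform observations: first, the domination count above pins every small dominating set to contain $\{v_2,v_3\}$, leaving only one free vertex of slack; second, since $v_2$ and $v_3$ each have four neighbors in $S(P_4)$, any single additional black vertex cannot produce the simultaneous unique-white-neighbor configurations at both $v_2$ and $v_3$ that are required to reach the pendants $u_1$ and $u_4$. The chase in each of the three symmetry-reduced cases therefore terminates with at least one pendant unforced, yielding $F_d[S(P_4)]\geq 4$ and completing the proof.
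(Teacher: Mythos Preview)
The paper does not give its own proof of this statement: the theorem is quoted from \cite{df1} and used only to deduce the corollary about $F_{cd}[S(P_n)]$ for $2\le n\le 4$. There is therefore nothing in the paper to compare your argument against.

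That said, your proposal is a correct and self-contained proof. Your structural setup is right: in $S(P_n)$ the vertex $u_i$ is adjacent exactly to the $P_n$-neighbours of $v_i$, so $u_1$ and $u_n$ are pendants attached at $v_2$ and $v_{n-1}$. For $n=2$ the identification $S(P_2)\cong P_4$ is correct, and the bound via Proposition~\ref{df} is the clean way to get $F_d\ge 2$. For $n=3$ your list of size-$2$ dominating sets is exhaustive (once $v_2$ is forced in by the two pendants, the only remaining constraint is to dominate $u_2$), and each of the three sets indeed stalls with $u_1$ or $u_3$ unforced; the witness $\{v_1,v_2,u_3\}$ works exactly as you describe. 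For $n=4$ your key reduction---that any dominating set of size at most three must contain both $v_2$ and $v_3$---is correct (if, say, $v_2\notin D$ then $u_1\in D$, and the remaining undominated vertices split into two ``sides'' that no single additional vertex can cover), and the symmetry reduction to $x\in\{v_1,u_1,u_2\}$ together with the three short forcing chases is valid. The witness $\{v_1,v_2,v_3,v_4\}$ finishes the upper bound.

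One minor remark: in your closing paragraph you summarise the obstruction for $n=4$ by saying a single extra black vertex cannot create unique-white-neighbour configurations at both $v_2$ and $v_3$; as written this is a heuristic, since forcing can proceed through other vertices first (as indeed happens in your $x=v_1$ and $x=u_2$ chases). The actual lower bound is established by the three explicit case checks you already performed, so the argument is complete regardless.
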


From the above two theorems we can easily verify the following result
\begin{corollary}
    Let $S(P_n)$ be the splitting graph of the path $P_n$. Then for $2\leq n \leq 4$, $F_{cd}[(S(P_n)]=n$.
\end{corollary}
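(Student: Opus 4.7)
The plan is to obtain the claim by sandwiching $F_{cd}[S(P_n)]$ between two already-established bounds, rather than re-examining the structure of $S(P_n)$ directly.

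First I would invoke the preceding theorem, which states $n-1 \leq F_{cd}[S(P_n)] \leq n$, to supply the upper bound $F_{cd}[S(P_n)] \leq n$ for every $n$, and in particular for $n \in \{2,3,4\}$. This half requires nothing further.

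For the lower bound I would appeal to the earlier Proposition that for any connected graph $G$ one has $F_d(G) \leq F_{cd}(G)$, combined with the theorem cited from \cite{df1} giving $F_d[S(P_n)] = n$ in the range $2 \leq n \leq 4$. Chaining these yields $n = F_d[S(P_n)] \leq F_{cd}[S(P_n)]$.

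Combining the two estimates gives $n \leq F_{cd}[S(P_n)] \leq n$, hence equality, which is the corollary. The main (and only) subtlety is simply to remark that the restricted range $2 \leq n \leq 4$ is forced by the availability of $F_d[S(P_n)] = n$ in that range; outside this range the previous theorem only guarantees $F_{cd}[S(P_n)] \geq n-1$, so the sandwich argument no longer closes.
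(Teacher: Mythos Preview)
Your argument is correct and is precisely the intended one: the paper itself gives no detailed proof, simply remarking that the result follows ``from the above two theorems,'' namely $n-1\le F_{cd}[S(P_n)]\le n$ and $F_d[S(P_n)]=n$ for $2\le n\le 4$, and the connecting inequality $F_d(G)\le F_{cd}(G)$ that you invoke is exactly what is needed to close the sandwich.
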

\begin{theorem}
    Let $S(P_n)$ be the splitting graph of the path $P_n$. Then for $ n \geq 5$, $F_{cd}[(S(P_n)]=n-1$.
\end{theorem}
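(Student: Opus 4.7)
The lower bound $F_{cd}[S(P_n)]\ge n-1$ is already at hand from the preceding theorem, so the argument reduces to producing an explicit connected dom-forcing set of cardinality exactly $n-1$. The candidate I would use is
$$S \;=\; \{v_2,v_3,\ldots,v_{n-3}\}\;\cup\;\{u_{n-2}\}\;\cup\;\{v_{n-1},v_n\},$$
of size $(n-4)+1+2=n-1$. Connectedness of $\langle S\rangle$ is immediate: the $v_i$'s in $S$ form the subpath $v_2v_3\cdots v_{n-3}$, the vertex $u_{n-2}$ is adjacent in $S(P_n)$ to both $v_{n-3}$ and $v_{n-1}$, and the edge $v_{n-1}v_n$ ties in $v_n$. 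For the domination condition I would check each vertex outside $S$ one by one: $v_1$ is dominated by $v_2$, $v_{n-2}$ by $v_{n-3}$, $u_1$ by $v_2$, $u_{n-1}$ by $v_n$, $u_n$ by $v_{n-1}$, and every remaining interior $u_i$ has either $v_{i-1}$ or $v_{i+1}$ inside $\{v_2,\ldots,v_{n-3}\}$.

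The zero-forcing verification is the real content, and I would set it up as a cascade propagating from the right end of the graph leftwards. The first three moves are dictated by the right-end structure: $v_n\to u_{n-1}$ (since $v_n$'s only white neighbour is $u_{n-1}$), then $u_{n-1}\to v_{n-2}$, and then $v_{n-1}\to u_n$. After these, I would run a downward induction on $i$: for $i=n-2,n-3,\ldots,3$, the four neighbours of $v_i$ are $v_{i-1},v_{i+1},u_{i-1},u_{i+1}$, and I claim that at each step all but $u_{i-1}$ are already black, yielding $v_i\to u_{i-1}$. The base step $i=n-2$ uses that $u_{n-1}$ was forced in the very first move; the step $i=n-3$ uses that $u_{i+1}=u_{n-2}$ belonged to $S$ from the start; and for $i\le n-4$ the vertex $u_{i+1}$ was forced during the immediately preceding iteration of the cascade. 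The chain then terminates with $u_2\to v_1$ (the only remaining white neighbour of $u_2$ once $v_3$ is black) and $v_2\to u_1$, blackening the whole graph.

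The main obstacle is precisely the bookkeeping inside this leftward cascade: one has to verify at every index $i$ that exactly one of the four neighbours of $v_i$ is still white, which is cleanest formulated as a strong induction on the iteration number. Two ``seams'' deserve extra care: the transition at $i=n-3$, where the incoming black neighbour $u_{n-2}$ comes from $S$ itself rather than from a prior forcing step, and the final handshake $u_2\to v_1\to$ $\ldots \to u_1$, whose validity relies on $v_3\in S$. The small boundary case $n=5$, in which the spine $\{v_2,\ldots,v_{n-3}\}$ collapses to the singleton $\{v_2\}$ and hence $u_2$ is no longer automatically dominated, should be checked separately.
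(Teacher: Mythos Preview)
Your argument for $n\ge 6$ is correct and follows the same strategy as the paper: exhibit an explicit connected dom-forcing set of size $n-1$. For $n=5,6$ your set $\{v_2,\ldots,v_{n-3},u_{n-2},v_{n-1},v_n\}$ is literally the paper's ad hoc choice; for $n\ge 7$ the paper instead uses $\{v_2,v_3,u_3,u_4,v_5,\ldots,v_{n-1}\}$, but both constructions work and the verifications are parallel. One minor quibble: in your leftward cascade the neighbour $u_{i+1}$ was forced at iteration $i+2$, not the ``immediately preceding'' one, so the clean formulation is strong induction rather than ordinary induction---but the argument goes through.

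Your caution about $n=5$ is not merely warranted; it exposes a genuine error in the statement. The set $\{v_2,u_3,v_4,v_5\}$ (which both you and the paper propose) fails to dominate $u_2$, whose only neighbours are $v_1$ and $v_3$. In fact the theorem is false at $n=5$: any connected dominating set of $S(P_5)$ must contain $v_2$ and $v_4$ (to cover the pendants $u_1,u_5$), and since $\{v_3,u_3\}$ separates $v_2$ from $v_4$, a connected dominating set of size $4$ must contain one of $v_3,u_3$; if it is $u_3$ then the remaining vertex cannot dominate both $u_2$ and $u_4$, so every size-$4$ connected dominating set has the form $\{v_2,v_3,v_4,x\}$, and a direct check of the seven choices of $x$ shows none is zero forcing. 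Hence $F_{cd}[S(P_5)]=5$, and your proof (and the paper's) is complete and correct precisely for $n\ge 6$.
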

\begin{proof}
     Let $v_1,v_2, \cdots, v_n $ be the vertices of the path graph $P_n$ in $ S(P_n)$, and let $u_1, u_2, \cdots ,u_n$ be the vertices corresponding to $v_1,v_2, \cdots, v_n$ which are added to obtain $S(P_n)$. For $S(P_5)$, $CD_f= \{v_2,u_3, v_4, v_5\}$, and for $S(P_6)$, $CD_f= \{v_2,v_3, u_4, v_5, v_6\}$ form a connected dom-forcing set and is minimum. For $n \geq 7 $ consider the  subsets $CD_f=\{ v_2, v_3, u_3, u_4, v_5, \cdots, v_{n-1}\}$ of the vertex set of $S(P_n)$. he set $CD_f$ forms a connected dom-forcing set of the graph $S(P_n)$  and is minimum.  Hence for $ n \geq 7$, $F_{cd}[(S(P_n)]=n-1$.
\end{proof}

\begin{theorem} \cite{ zpr} \label{czc}
Let $S(C_n)$ be the splitting graph of the cycle $C_n$. Then $Z(S(C_n))=2 Z(C_n)=4$. 
\end{theorem}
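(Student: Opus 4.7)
The plan is to prove $Z(S(C_n)) = 4$ by separately establishing the matching bounds $Z(S(C_n)) \le 4$ and $Z(S(C_n)) \ge 4$; the second equality $2Z(C_n) = 4$ is immediate from the well-known value $Z(C_n) = 2$.

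For the upper bound I would exhibit an explicit zero forcing set of size four. Label the cycle vertices $v_1, \dots, v_n$ and the added vertices $u_1, \dots, u_n$, where $u_i$ is joined to $v_{i-1}$ and $v_{i+1}$ (indices mod $n$), and take $B = \{v_1, v_2, u_1, u_2\}$. Since $N(u_1) = \{v_n, v_2\}$ contains exactly one white vertex, namely $v_n$, we apply $u_1 \to v_n$; symmetrically $u_2 \to v_3$. With $v_3$ and $v_n$ now black, the vertex $v_1$ has the single white neighbor $u_n$ among $\{v_2, v_n, u_2, u_n\}$, giving $v_1 \to u_n$; similarly $v_2 \to u_3$. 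The same local configuration now appears at $(v_3, u_3)$ and $(v_n, u_n)$, so the forcing propagates clockwise and counterclockwise around the cycle in two alternating $u \to v \to u$ chains until every vertex is black, establishing $Z(S(C_n)) \le 4$.

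For the lower bound I argue that no three-element $Z$ can zero-force $S(C_n)$. The key observation is a degree restriction on the first forcing step: every $v_i$ has degree $4$, so for $v_i$ to perform the first force one needs $v_i$ itself plus three of its neighbors in $Z$, giving $|Z|\ge 4$. Hence the first force must come from some $u_j \in Z$, which has degree $2$ and therefore requires exactly one of $v_{j-1}, v_{j+1}$ to lie in $Z$. A useful reduction is that $F = \{u_1, \dots, u_n\}$ is a fort of $S(C_n)$, since each $v_i \notin F$ has precisely two $F$-neighbors $u_{i-1}$ and $u_{i+1}$; thus any zero forcing set must intersect $F$, and by symmetry one may assume $u_1 \in Z$. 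The condition above then forces at least one of $v_2, v_n$ into $Z$; WLOG $v_2 \in Z$, with the remaining element $w$ ranging over a short list of symmetry classes. In each essentially distinct case I would verify that after all initially available forces (at most two moves of the form $u \to v$) have been performed, no black vertex has exactly one white neighbour, so the closure under the color-change rule is a proper subset of $V(S(C_n))$, contradicting the assumption that $Z$ is zero forcing.

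The main obstacle is this case analysis over $w$: a slicker route would be to exhibit four pairwise disjoint forts in $S(C_n)$, immediately yielding $Z(S(C_n)) \ge 4$, but the canonical fort $\{u_1, \dots, u_n\}$ already consumes every degree-$2$ vertex, so producing three further pairwise disjoint forts among the $v_i$'s would require additional structural insight into $S(C_n)$. I therefore expect the degree-based first-move argument combined with the finite case check on $w$ to be the most transparent path to the lower bound.
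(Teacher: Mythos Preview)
This theorem carries the citation \cite{zpr} and is stated in the paper without proof; it is quoted from Chacko, Dominic and Premodkumar, \emph{On the zero forcing number of graphs and their splitting graphs}. There is therefore no in-paper argument to compare your attempt against.

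On the substance of your proposal: the upper bound via $B=\{v_1,v_2,u_1,u_2\}$ is correct, and the two-sided propagation you describe does colour all of $S(C_n)$ for every $n\ge 3$. For the lower bound, your first-move observation is the right reduction: a vertex of degree~$4$ cannot initiate forcing from a three-element set, so the first force is performed by some $u_j$, which already places $u_j$ and one of $v_{j\pm 1}$ in $Z$; the separate fort argument for $\{u_1,\dots,u_n\}$ is correct but redundant once you have this. The remaining step---checking that no choice of the third vertex $w$ yields a zero forcing set---is only sketched. That case analysis does go through (after at most two $u\to v$ forces every black $v_i$ has at least two white neighbours, and any black $u_k$ either has no black neighbour or has already forced), but you should make explicit that the relevant distinctions for $w$ depend only on its adjacency to $\{u_1,v_2,v_n\}$ and not on $n$, so that the verification is genuinely finite rather than an $n$-indexed family of checks. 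Small cases $n=3,4$ also deserve a line, since several vertices coincide there.
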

\begin{theorem}
    Let $S(C_n)$ be the splitting graph of the cycle $C_n$. Then  $$n-1 \leq F_{cd}[S(C_n)]\leq n.$$
\end{theorem}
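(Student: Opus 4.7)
The plan is to prove the two inequalities separately. For the upper bound $F_{cd}(S(C_n)) \leq n$, I would exhibit the explicit set $D = \{v_1, v_2, \ldots, v_{n-2}\} \cup \{u_1, u_2\}$ of size $n$. Connectivity is immediate because $v_1 v_2 \cdots v_{n-2}$ is a path in the cycle portion and $u_1 \sim v_2$, $u_2 \sim v_1$ both lie in $D$; domination is a routine check, since each $u_i$ with $i \geq 3$ has $v_{i-1}$ or $v_{i+1}$ in $D$, and $v_{n-1}, v_n$ are cycle-adjacent to $v_{n-2}, v_1 \in D$. For the zero-forcing step I would trace the cascade $u_1 \to v_n$, $v_1 \to u_n$, $u_n \to v_{n-1}$, $v_n \to u_{n-1}$, $v_{n-1} \to u_{n-2}$ --- each forcer having exactly one remaining white neighbour at that moment --- and then the chain $v_i \to u_{i+1}$ for $i = 2, 3, \ldots, n-3$, which succeeds because $v_i$ has $v_{i-1}, v_{i+1}$ initially black and, by induction along the chain, $u_{i-1}$ is black by then as well.

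For the lower bound $F_{cd}(S(C_n)) \geq n-1$ I would argue by contradiction. Assume a connected dom-forcing set $D$ exists with $|D| \leq n-2$. By Theorem \ref{cds}, $\gamma_c(S(C_n)) = \gamma_c(C_n) = n-2$, so $|D| = n-2$ and $D$ is in fact a minimum connected dominating set. To classify such sets I would introduce the edge-preserving projection $\varphi : V(S(C_n)) \to V(C_n)$ defined by $\varphi(v_i) = \varphi(u_i) = v_i$; no edge of $S(C_n)$ maps to a loop since $v_i \not\sim u_i$. Then $\varphi(D)$ is a connected dominating set of $C_n$ with $|\varphi(D)| \leq |D| = n-2 = \gamma_c(C_n)$, so equality is forced throughout: $\varphi$ is injective on $D$ (so $v_i$ and $u_i$ are never both in $D$) and $\varphi(D)$ is a path of $n-2$ consecutive cycle vertices, which after relabelling I may take to be $\{v_1, \ldots, v_{n-2}\}$. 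Thus each position $i \in \{1, \ldots, n-2\}$ contributes exactly one of $v_i, u_i$ to $D$, and $v_{n-1}, v_n, u_{n-1}, u_n \notin D$.

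Writing $I = \{i : u_i \in D\}$, domination of $u_{n-1}$ and $u_n$ (whose neighbours are $\{v_{n-2}, v_n\}$ and $\{v_{n-1}, v_1\}$ respectively) forces $v_{n-2}, v_1 \in D$, i.e.\ $1, n-2 \notin I$; dominating $u_1$ and $u_{n-2}$ then yields $2, n-3 \notin I$. Two consecutive indices $j, j+1 \in I$ would disconnect $D$: since $v_j, v_{j+1} \notin D$, the vertices $u_j, u_{j+1}$ share no neighbour inside $D$ and the induced subgraph splits between positions $j$ and $j+1$. Hence $I$ contains no two consecutive elements and in particular $I \subseteq \{3, \ldots, n-4\}$. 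Under these restrictions I would count initial white neighbours of each vertex of $D$: a $u_j \in D$ has both of its two neighbours $v_{j-1}, v_{j+1}$ in $D$ (since $j \pm 1 \notin I$), hence zero white neighbours and so it cannot force; a $v_k \in D$ sees at each adjacent position $k \pm 1$ exactly one of the pair $\{v, u\}$ in $D$, giving at least two white neighbours, with a third white neighbour at the endpoints $k \in \{1, n-2\}$. No vertex of $D$ has exactly one white neighbour, so no forcing can initiate and $D$ fails to be a zero-forcing set --- the desired contradiction.

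The main obstacle is the classification step: showing via the projection $\varphi$ that every minimum connected dominating set of $S(C_n)$ arises from the cyclic-path CDS of $C_n$ by possibly swapping a few internal, non-consecutive vertices $v_j$ with their twins $u_j$. Once this structure is pinned down, the failure of the zero-forcing condition reduces to the routine local white-neighbour count sketched above.
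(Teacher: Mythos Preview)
Your argument is correct and follows the paper's overall strategy: exhibit an explicit $n$-element connected dom-forcing set for the upper bound, and show that no minimum connected dominating set (of size $n-2$) can be a zero forcing set for the lower bound. Your upper-bound witness $\{v_1,\dots,v_{n-2},u_1,u_2\}$ differs only cosmetically from the paper's $A\cup B=\{v_1,\dots,v_{n-2},u_2,u_3\}$, and your forcing cascade is spelled out in full whereas the paper simply invokes Theorem~\ref{czc}.

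The substantive difference is in the lower bound. The paper asserts that the minimum connected dominating set $\{v_1,\dots,v_{n-2}\}$ is ``unique up to isomorphism'' and observes that it is not a zero forcing set. Your projection $\varphi:S(C_n)\to C_n$ instead yields a complete classification of \emph{all} size-$(n-2)$ connected dominating sets---each arises from the cyclic path $\{v_1,\dots,v_{n-2}\}$ by swapping some non-adjacent interior $v_j$'s for their twins $u_j$---after which you verify by a white-neighbour count that none of these can initiate a force. This extra care is warranted: for $n\ge 7$ the paper's uniqueness assertion is in fact false (e.g.\ $\{v_1,v_2,u_3,v_4,\dots,v_{n-2}\}$ is a second minimum connected dominating set, not related to $\{v_1,\dots,v_{n-2}\}$ by any automorphism of $S(C_n)$ since automorphisms must preserve the degree-$4$/degree-$2$ partition). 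Your projection argument therefore genuinely closes a gap that the paper's proof leaves open.
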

\begin{proof}
     Let $v_1,v_2, \cdots, v_n $ be the vertices of the cycle graph $C_n$ in $S(C_n)$, and  let $u_1, u_2, \cdots ,u_n$ be the vertices corresponding to $v_1,v_2, \cdots, v_n$ which are added to obtain $S(C_n)$. Then $A=\{v_1, \cdots, v_{n-2} \}$     be a minimum connected dominating set by theorem \ref{cds}. This set is unique up to isomorphism. We can see that this set is not a zero forcing set. Hence $F_{cd}[S(C_n)]\geq n-1$. The set $B=\{v_1, v_2, u_2, u_3\}$ zero forces $S(C_n)$ by theorem \ref{czc}. Therefore $A \cup B$ be a connected dom-forcing set of $S(C_n)$. Hence $F_{cd}[S(C_n)]\leq n$. From these we have $n-1 \leq F_{cd}[S(C_n)]\leq n$.
\end{proof}

\begin{corollary}
    Let $S(C_n)$ be the splitting graph of the cycle $C_n$. Then for $ n \geq 7$, $F_{cd}[(S(C_n)]=n-1$.
\end{corollary}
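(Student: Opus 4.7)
The plan is to match the lower bound $F_{cd}[S(C_n)] \geq n-1$ from the preceding theorem with an explicit construction of a connected dom-forcing set of size exactly $n-1$, modelled on the corresponding construction for $S(P_n)$ in the path case. Let $v_1, \ldots, v_n$ denote the cycle vertices (indices mod $n$) and $u_1, \ldots, u_n$ the twin vertices added in the splitting, so that in $S(C_n)$ each $u_i$ is adjacent to exactly $v_{i-1}$ and $v_{i+1}$. I propose the candidate
$$CD_f \;=\; \{v_1,\, v_2,\, u_2,\, u_3,\, v_4,\, v_5,\, \ldots,\, v_{n-2}\},$$
which has cardinality $2 + 2 + (n-5) = n-1$.

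I would then verify three properties. Connectedness: the vertices $v_4, v_5, \ldots, v_{n-2}$ form a path, attached to $v_2$ by the vertex $u_3$ (adjacent to both $v_2$ and $v_4$ in $S(C_n)$); the vertex $v_1$ attaches via $v_1 v_2$, and $u_2$ attaches via $u_2 v_1$. Domination: the only vertices outside $CD_f$ are $v_3, v_{n-1}, v_n$ together with $u_1, u_4, u_5, \ldots, u_n$; for each, one of its neighbors lies in $CD_f$. The cases requiring $n \geq 7$ are $u_4$ (dominated by $v_5 \in CD_f$, which needs $5 \leq n-2$) and $u_{n-2}$ (dominated by $v_{n-3} \in CD_f$, which needs $n-3 \geq 4$).

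For the zero-forcing property I would exhibit an explicit forcing sequence. The initial move is $u_2 \to v_3$, valid because $u_2$'s other neighbor $v_1$ is already black. Once $v_3$ is black, a ``right cascade'' runs: $v_3 \to u_4$, $v_4 \to u_5$, and then $v_i \to u_{i+1}$ for $i = 5, 6, \ldots, n-3$; at each step the cycle neighbors $v_{i-1}, v_{i+1}$ of $v_i$ lie in $CD_f$ and $u_{i-1}$ has just turned black, leaving $u_{i+1}$ as the unique white neighbor of $v_i$. Simultaneously a ``left/wrap-around cascade'' runs: $v_2 \to u_1$, then $u_1 \to v_n$, then $v_1 \to u_n$, then $u_n \to v_{n-1}$, and finally $v_n \to u_{n-1}$. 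After all these forces every vertex is black, so $CD_f$ is a connected dom-forcing set. Combined with the previous theorem's lower bound, $F_{cd}[S(C_n)] = n-1$.

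The main obstacle is the bookkeeping for the forcing sequence: one must check at every step that the forcing vertex has exactly one white neighbor, and in particular that the right cascade and the wrap-around cascade never simultaneously try to force the same white vertex from two sides. The hypothesis $n \geq 7$ is used in two ways — to guarantee the cascade indices $v_5, \ldots, v_{n-3}$ form a nonempty well-behaved chain, and to keep the ``closing'' vertices $v_{n-1}, v_n, u_{n-1}, u_n$ sufficiently far from the initial set so that they are first reached only through the wrap-around $u_1 \to v_n \to \cdots$ path. The small cases $n \leq 6$, excluded by hypothesis, are precisely where this separation fails and where the upper bound $n$ from the preceding theorem cannot be improved by this construction.
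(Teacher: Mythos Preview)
Your proposal is correct and uses precisely the same connected dom-forcing set $CD_f = \{v_1, v_2, u_2, u_3, v_4, \ldots, v_{n-2}\}$ that the paper exhibits. The paper's own proof simply asserts without details that this set is a minimum connected dom-forcing set, whereas you supply the explicit verification of connectedness, domination, and a complete forcing sequence; so your argument is the same approach, carried out more thoroughly.
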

\begin{proof}
    Let $v_1,v_2, \cdots, v_n $ be the vertices of the cycle graph $C_n$ in $S(C_n)$, and  let $u_1, u_2, \cdots ,u_n$ be the vertices corresponding to $v_1,v_2, \cdots, v_n$ which are added to obtain $S(C_n)$. For $n \geq 7 $ consider the  subsets $CD_f=\{ v_1, v_2, u_2, u_3, v_4, \cdots, v_{n-2}\}$ of the vertex set of $S(C_n)$. The set $CD_f$ forms a connected dom-forcing set of the graph $S(C_n)$  and is minimum.  Hence for $ n \geq 7$, $F_{cd}[(S(C_n)]=n-1$.
\end{proof}

\begin{theorem}\cite{df1}
 Let $S(K_{1,n})$ be the splitting graph of a star graph $K_{1,n}$. Then for $n \geq 2$, $F_d [S(K_{1,n})]= 2n-1$.
\end{theorem}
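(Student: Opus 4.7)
The plan is to establish $F_d[S(K_{1,n})] = 2n-1$ by the usual strategy of exhibiting a dom-forcing set of the claimed size and then matching it with a structural lower bound. To fix notation, let $v_0$ denote the center and $v_1,\ldots,v_n$ the leaves of the underlying star $K_{1,n}$, and let $u_0,u_1,\ldots,u_n$ be the splitting partners. A direct check of the splitting-graph definition yields the adjacencies in $S(K_{1,n})$: the vertex $v_0$ is joined to $v_1,\ldots,v_n,u_1,\ldots,u_n$; the added vertex $u_0$ is joined precisely to $v_1,\ldots,v_n$; while each $u_i$ with $i\geq 1$ is a pendant whose only neighbor is $v_0$.

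For the upper bound I would take
$$D=\{v_0\}\cup\{u_1,\ldots,u_{n-1}\}\cup\{v_1,\ldots,v_{n-1}\},$$
which has exactly $2n-1$ vertices. Domination is immediate: the vertices outside $D$ are $v_n$, $u_0$ and $u_n$, and each is adjacent to a vertex in $D$ (namely $v_0$ for $v_n$ and $u_n$, and $v_1$ for $u_0$). For the forcing, after coloring $D$ black, $v_1$ has unique white neighbor $u_0$ and so $v_1\to u_0$; then $u_0$ has unique white neighbor $v_n$ and so $u_0\to v_n$; and finally $v_0$ has unique white neighbor $u_n$ and so $v_0\to u_n$, completing the coloring.

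For the lower bound I would rely on two structural observations. First, because $u_1,\ldots,u_n$ are $n$ pendants sharing the single neighbor $v_0$, any zero forcing set of $S(K_{1,n})$ must contain at least $n-1$ of them: no other vertex is adjacent to a pendant, and $v_0$ can perform at most one forcing step whose target is a pendant. Suppose $D$ is a dom-forcing set missing exactly one pendant, say $u_n\notin D$. Then the only neighbor of $u_n$ is $v_0$, so domination of $u_n$ forces $v_0\in D$. To force $u_n$, at some stage $v_0$ must have $u_n$ as its unique white neighbor, so $v_1,\ldots,v_n$ must all be black by that moment. While $u_n$ remains white, however, $v_0$ cannot force any $v_i$, so each $v_i$ must either belong to $D$ or be forced by $u_0$; and since $N(u_0)=\{v_1,\ldots,v_n\}$, the vertex $u_0$ can only strip the $v_i$'s one at a time, forcing at least $n-1$ of them to lie in $D$. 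Summing, $|D|\geq 1+(n-1)+(n-1)=2n-1$.

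The main obstacle I anticipate is closing off the remaining case in which all $n$ pendants $u_1,\ldots,u_n$ lie in $D$. I would handle it by the same mechanism: once $v_0$ is blackened (via some $u_i\to v_0$) it still has $n$ white leaf-neighbors, and $u_0$ can only strip these one at a time, so $n-1$ of the leaves must again lie in $D$. Combining this with the requirement that every leaf $v_i\notin D$ be dominated, which forces $v_0\in D$ or $u_0\in D$, one obtains $|D|\geq 2n$ in this scenario, confirming it is strictly suboptimal. Matching the upper and lower bounds yields $F_d[S(K_{1,n})]=2n-1$.
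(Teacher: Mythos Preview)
This theorem is quoted in the paper from \cite{df1} without proof, so there is no in-paper argument to compare against; the paper merely uses the result to derive the subsequent statement about $F_{cd}[S(K_{1,n})]$.

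Your argument is correct and self-contained. The upper-bound construction checks out exactly as you describe, and the Case~A lower bound is clean: with one pendant $u_n$ missing, domination forces $v_0\in D$, and since $v_0$ keeps $u_n$ as a white neighbour until the very last step, the leaves $v_i$ can only be forced by $u_0$, which handles at most one of them.

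The only place that would benefit from a sentence of tightening is Case~B. The phrase ``$u_0$ can only strip these one at a time'' does not by itself rule out $v_0$ also forcing a leaf. The real reason at least $n-1$ of the $v_i$ must lie in $D$ is that $N(u_0)=\{v_1,\dots,v_n\}\subseteq N(v_0)$, so while two or more leaves remain white \emph{both} $v_0$ and $u_0$ have multiple white neighbours and neither can perform a force; since the $v_i$ have no other neighbours, at most one leaf can ever be forced. Once that is said, your count $n+(n-1)+1=2n$ (the extra $+1$ coming from domination of the missing leaf) is airtight, and the overall lower bound $F_d[S(K_{1,n})]\ge 2n-1$ follows.
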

\begin{theorem}
 Let $S(K_{1,n})$ be the splitting graph of the Star Graph $K_{1,n}$. Then for $n \geq 2$, $F_{cd} [S(K_{1,n})]= 2n-1$.
\end{theorem}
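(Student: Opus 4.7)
The plan is to pair the just-quoted lower bound on the dom-forcing number with an explicit connected dom-forcing set of matching size. Since by definition every connected dom-forcing set is in particular a dom-forcing set, the previous theorem immediately yields
\[
F_{cd}[S(K_{1,n})] \;\geq\; F_d[S(K_{1,n})] \;=\; 2n-1,
\]
so the whole task reduces to exhibiting a connected dom-forcing set of cardinality $2n-1$.

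To set up notation, I would let $v_0$ be the centre of $K_{1,n}$ and $v_1,\ldots,v_n$ its leaves, with $u_0,u_1,\ldots,u_n$ the mirror vertices added in the splitting construction. The adjacencies one needs are: $v_0$ is adjacent to all $v_i$ and all $u_i$ with $i\geq 1$; $u_0$ is adjacent to $v_1,\ldots,v_n$; each $u_i$ with $i\geq 1$ is a pendant attached at $v_0$; and each $v_i$ with $i\geq 1$ has exactly the two neighbours $v_0$ and $u_0$. The candidate set I propose is
\[
CD_f \;=\; \{v_0\}\cup\{v_1,v_2,\ldots,v_{n-1}\}\cup\{u_1,u_2,\ldots,u_{n-1}\},
\]
of size $1+(n-1)+(n-1)=2n-1$.

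Then I would verify the three required properties in turn. Connectedness is immediate because $v_0\in CD_f$ is adjacent in $S(K_{1,n})$ to every other vertex of $CD_f$. Domination follows since the only vertices outside $CD_f$ are $v_n,u_n,u_0$: both $v_n$ and $u_n$ are neighbours of $v_0\in CD_f$, while $u_0$ is a neighbour of $v_1\in CD_f$. For the zero forcing property, I would run the cascade explicitly: with $CD_f$ coloured black, the vertex $v_1$ has neighbours $\{v_0,u_0\}$ of which only $u_0$ is white, so $v_1\rightarrow u_0$; next, $u_0$ now black has neighbours $\{v_1,\ldots,v_n\}$ of which only $v_n$ is still white, so $u_0\rightarrow v_n$; finally $v_0$'s only remaining white neighbour is $u_n$, so $v_0\rightarrow u_n$, and the whole graph is black.

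The only mildly delicate point is choosing the initial set so that the forcing chain actually fires, because the many pendants $u_i$ at $v_0$ would block $v_0$ from forcing anything if two of them were left white simultaneously. The chosen $CD_f$ avoids this by leaving exactly one such pendant ($u_n$) outside, and by arranging that $u_0$ is the unique white neighbour of a black $v_i$. Combining the explicit construction with the lower bound then gives $F_{cd}[S(K_{1,n})]=2n-1$, as claimed.
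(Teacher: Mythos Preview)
Your proof is correct and follows essentially the same approach as the paper: both use the lower bound $F_{cd}\geq F_d=2n-1$ from the preceding theorem and exhibit the same connected dom-forcing set consisting of the centre together with all but one leaf and its mirror vertex (the paper writes it as $\{u_1,v_2,\ldots,v_n,v_2',\ldots,v_n'\}$). Your version is in fact more detailed, since you explicitly verify connectedness, domination, and the forcing cascade, whereas the paper simply asserts that the set works.
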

\begin{proof}
For $n\geq 2$, let $u_1, v_1,v_2, \cdots, v_n $ be the vertices of the star graph $K_{1,n}$ with $\deg(u_1)=n$ and $u'_1, v'_1,v'_2, \cdots ,v'_n$ be the vertices corresponding to $u_1, v_1, v_2, \cdots, v_n$ which are added to obtain $S(K_{1,n})$. From the above theorem  \\ $B=\{u_1, v_2, \cdots, v_n, v'_2, \cdots , v'_n \} $ form a dom-forcing set which is connected. Hence $F_{cd} [S(K_{1,n})]= 2n-1$.
\end{proof}

\begin{theorem} \cite{zpr}
 Let $S(L_n)$ be the splitting graph of the ladder graph $L_n$. Then for $n\geq 2$, $Z[S(L_n)]=4$. 
\end{theorem}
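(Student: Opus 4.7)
Label the vertices of $L_n$ as $v_{i,j}$ with $i\in\{1,2\}$ and $1\le j\le n$ (rungs $v_{1,j}v_{2,j}$, rails $v_{i,j}v_{i,j+1}$), and denote by $u_{i,j}$ the twin of $v_{i,j}$ in $S(L_n)$; each $u_{i,j}$ is adjacent only to the original neighbors of $v_{i,j}$, so the twins form an independent set. The plan is to exhibit $Z_0=\{v_{1,1},v_{2,1},u_{1,1},u_{2,1}\}$ as a zero forcing set and prove by induction on $j$ that after finitely many color changes the whole $j$-th column $\{v_{1,j},v_{2,j},u_{1,j},u_{2,j}\}$ becomes black. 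The inductive step exploits the fact that once columns $1,\ldots,j$ are fully black, the degree-$2$ or degree-$3$ twin $u_{i,j}$ has only one remaining white neighbor $v_{i,j+1}$, giving $u_{i,j}\to v_{i,j+1}$; then $v_{i,j}$ is in turn down to the single white neighbor $u_{i,j+1}$ and forces it. Iterating across $j$ finishes the graph, yielding $Z[S(L_n)]\le 4$.

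\textbf{Lower bound.} Assume for contradiction that $Z$ is a zero forcing set of size at most $3$, and examine the first forcing step: some $b\in Z$ must satisfy $|N(b)\cap Z|=\deg(b)-1$. Since $|Z\setminus\{b\}|\le 2$, we need $\deg_{S(L_n)}(b)\le 3$; a direct computation gives $\deg(v_{i,j})\in\{4,6\}$ and $\deg(u_{i,j})\in\{2,3\}$, so $b$ must be a twin. I would then split on whether $b$ is an interior twin (degree $3$, forcing $Z\setminus\{b\}\subseteq N(b)$) or a corner twin (degree $2$, leaving one free third vertex), and in each case check vertex by vertex which candidate forcer can act after the first force occurs. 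The key structural observation is that, immediately after the first force, every black vertex still has at least two white neighbors among the twins and the adjacent columns, so the forcing process stalls well before exhausting $S(L_n)$, a contradiction.

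\textbf{Main obstacle.} The upper bound is essentially mechanical once $Z_0$ is guessed. The real difficulty is the lower bound, particularly the corner-twin case, where the third vertex of $Z$ may be placed anywhere and each placement demands its own verification that no further forcing is possible. A cleaner route, which I would try first, is to invoke a general inequality $Z(S(G))\ge 2Z(G)$ (in the spirit of Theorems \ref{zsp} and \ref{czc}) combined with the known value $Z(L_n)=2$, collapsing the lower bound to a single line.
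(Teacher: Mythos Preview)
This theorem is cited from \cite{zpr} and is not proved in the present paper; it is invoked only as input for the subsequent bound on $F_{cd}[S(L_n)]$. There is therefore no in-paper proof against which to compare your proposal.

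On the proposal itself: your upper bound is correct, and the column-by-column induction goes through exactly as written. The lower bound, however, is only a sketch. You correctly argue that any first force from a set of size at most $3$ must issue from a twin vertex, but the ensuing case analysis is promised rather than performed, and your ``key structural observation''---that after the first force every black vertex still has at least two white neighbors---is asserted without verification; in the corner-twin subcase, where the third black vertex may sit anywhere in $S(L_n)$, this is genuinely delicate to check. Your own suggested shortcut is the right move: the inequality $Z(S(G))\ge 2Z(G)$ together with $Z(L_n)=2$ gives the lower bound in one line. Since the other results quoted from \cite{zpr} (Theorems~\ref{zsp} and~\ref{czc}) are both instances of the identity $Z(S(G))=2Z(G)$, that general bound is almost certainly what \cite{zpr} actually proves, and invoking it directly is the intended route.
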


\begin{theorem}
 For $n\geq 2$, $$n+1 \leq F_{cd}[S(L_n)]\leq n+2.$$
\end{theorem}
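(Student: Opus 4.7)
The proof splits into the upper and the lower bounds. For the upper bound I will exhibit an explicit connected dom-forcing set of size $n+2$. Label the vertices of $L_n$ as $v_{i,j}$ ($i\in\{1,2\}$, $j\in\{1,\ldots,n\}$), and let $u_{i,j}$ be the splitting twin of $v_{i,j}$ in $S(L_n)$; so $u_{i,j}$ is adjacent precisely to $N_{L_n}(v_{i,j})$. I propose
$$CD_f = \{v_{1,1}, v_{1,2}, \ldots, v_{1,n}\}\cup\{u_{1,1}, u_{2,1}\}.$$
The size is $n+2$, connectedness is immediate (the top rail is a path and the two added twins attach via $v_{1,2}$ and $v_{1,1}$), and domination is routine: the top rail dominates all original vertices together with every interior twin, the bottom-corner twins $u_{1,1},u_{2,1}$ are in $CD_f$ themselves, and the twins $u_{1,n},u_{2,n}$ are dominated by $v_{1,n-1}$ and $v_{1,n}$.

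The forcing I carry out column by column. Initially $u_{1,1}$ has $v_{2,1}$ as its unique white neighbour and $u_{2,1}$ has $v_{2,2}$ as its unique white neighbour, so $u_{1,1}\to v_{2,1}$ and $u_{2,1}\to v_{2,2}$. Then $v_{1,1}$ has unique white neighbour $u_{1,2}$ and $v_{2,1}$ has unique white neighbour $u_{2,2}$, so columns $1$ and $2$ become entirely black. Inductively, once columns $j-1$ and $j$ are fully black, the twin $u_{2,j}$ has unique white neighbour $v_{2,j+1}$, then $v_{1,j}\to u_{1,j+1}$ and $v_{2,j}\to u_{2,j+1}$, completing column $j+1$. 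The last column is handled analogously, with $v_{1,n-1}\to u_{1,n}$ and $v_{2,n-1}\to u_{2,n}$. Hence $F_{cd}[S(L_n)]\le n+2$.

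For the lower bound I combine Proposition \ref{bound} with the given graph invariants. By Theorem \ref{cds}, $\gamma_c(S(L_n))=\gamma_c(L_n)$ whenever $\gamma_c(L_n)\ge 2$; by Theorem \ref{c1} this is $2$ for $n\in\{2,3\}$ and $n$ for $n\ge 4$; and the given $Z(S(L_n))=4$ gives $F_{cd}\ge \max\{\gamma_c,Z\}$. For $n\in\{2,3\}$ this already yields $F_{cd}\ge 4\ge n+1$. For $n\ge 4$ we have $F_{cd}\ge n$, and we must exclude equality by showing that no connected dominating set $D$ of $S(L_n)$ with $|D|=n$ can be a zero-forcing set. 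If $D\subseteq V(L_n)$ (no twins used), then every $v_{i,j}\in D$ has $\deg_{L_n}(v_{i,j})\ge 2$ twin-neighbours in $S(L_n)$, all white, so each initially black vertex has at least two white neighbours and no forcing can start. If some $u_{i,j}\in D$, I plan to use a swap argument: replacing $u_{i,j}$ by $v_{i,j}$ (which is possible by the connectivity of $D$) yields another $n$-vertex connected dominating set $D'$, and a careful localized analysis around each $u\in D$, exploiting the minimality $|D|=n$ to keep $D$ structurally ``thin'' across the ladder, will show no $u_{i,j}\in D$ can have as many as $\deg_{L_n}(v_{i,j})-1$ of its $v$-neighbours inside $D$, so again no initial force exists.

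The main obstacle I anticipate is precisely this twin-case of the lower bound. The swap argument rules out a $v$-only minimum CDS being the ZFS, but it does not automatically rule out $D$ itself; one must verify directly, using the tight budget $|D|=n$ and the domination requirements at the four corner twins $u_{1,1},u_{2,1},u_{1,n},u_{2,n}$ (each forced to hit a set of only two or three specific vertices), that the resulting minimum CDSs are all either rails, L-bends, or similarly sparse configurations in which no vertex of $D$ has a unique white neighbour. I expect this structural classification of minimum CDSs of $S(L_n)$ to be the longest and most delicate step.
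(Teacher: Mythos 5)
Your upper bound is correct and is essentially the paper's: the paper also takes the rail $A=\{v_1,\dots,v_n\}$ (a connected dominating set) and enlarges it by a small zero-forcing seed of twin vertices (the paper uses the known forcing set $\{v_1,v_2,u_1',v_1'\}$ of size $4$, two of whose vertices already lie on the rail), giving $n+2$; your set, the top rail together with the two column-one twins, is the same idea with a different seed, and your column-by-column forcing chronology checks out. Your treatment of $n=2,3$ in the lower bound via $F_{cd}\ge Z_c\ge Z[S(L_n)]=4\ge n+1$ is also sound, and in fact it covers a case the paper's argument glosses over, since for $n\le 3$ the minimum connected dominating set of $S(L_n)$ has size $2$, not $n$.

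The genuine gap is in the lower bound for $n\ge 4$. You correctly reduce the problem to showing that no connected dominating set $D$ of size $n=\gamma_c(S(L_n))$ is a zero forcing set, and your argument for twin-free $D$ is complete (every $v\in D$ then has at least $\deg_{L_n}(v)\ge 2$ white twin-neighbours, so no force can occur). But for $D$ containing twin vertices you only describe a plan: the swap $u_{i,j}\mapsto v_{i,j}$ produces \emph{another} minimum connected dominating set, which, as you yourself note, says nothing about whether $D$ itself forces, and the promised ``careful localized analysis'' and ``structural classification of minimum CDSs'' is never carried out. As it stands, the claim that no vertex of such a $D$ has a unique white neighbour is unproved, so the inequality $F_{cd}[S(L_n)]\ge n+1$ is not established for $n\ge 4$. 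To be fair, the paper's own proof simply asserts ``no minimum connected dominating set zero forces the entire graph'' without argument, so your write-up is at least as explicit; but a complete proof requires you to finish this case, e.g.\ by showing that a size-$n$ connected dominating set containing a twin either cannot exist or still leaves every black vertex with at least two white neighbours initially.
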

\begin{proof}
Let $\{u_1, u_2, \cdots, u_n, v_1, v_2, \cdots, v_n\}$ be the vertices of the ladder graph $L_n$ and let $\{u'_1, u'_2, \cdots, u'_n, v'_1, v'_2, \cdots, v'_n\}$ be the vertices corresponding to \\$\{u_1, u_2, \cdots, u_n, v_1, v_2, \cdots, v_n\}$ which are added to obtain $S(L_n)$. Then $A=\{v_1, \cdots, v_{n} \}$     be a minimum connected dominating set by theorem \ref{cds}. We can see that no minimum connected dominating set zero forces the entire graph. Hence $F_{cd}[S(L_n)]\geq n+1$. The set $B=\{v_1, v_2, u'_1, v'_1\}$ zero forces $S(L_n)$ by the above theorem. Therefore the set $A \cup B$ is a connected dom-forcing set of $S(L_n)$. Hence $F_{cd}[S(L_n)]\leq n+2$. From these we have  $$n+1 \leq F_d[S(L_n)]\leq n+2.$$
\end{proof}

\section{Summary and open questions}

In this article, we introduced a new parameter in graph theory called the connected dom-forcing number. In Section 2, we established upper and lower bounds for this parameter in terms of the connected zero forcing number and the connected domination number.\\

Section 3 explores the connected dom-forcing number for various types of product graphs, including the corona product, generalized corona product, rooted product, and Cartesian product of graphs.\\

In Section 4, we examined the connected dom-forcing number of the splitting graph of a given graph. We considered the connected dom-forcing number for specific graphs such as $S(P_{n}), S(C_{n}), S(K_{1,n})$ and $S(L_{n})$. \\

Some questions remain open, such as the characterization of graphs where  $ F_{cd}(H) = F_{cd} (G)$, with $H$ being a subgraph of $G$.  Another open problem is the characterization of ladder graphs satisfying $n+1 = F_{cd} [S(L_{n})]$ and $n+2 = F_{cd} [S(L_{n})]$.      
\bibliographystyle{unsrt} 


\end{document}